\numberwithin{equation}{section}
\definecolor{darkblue}{rgb}{0.0,0,0.7}
\newcommand{\darkblue}{\color{darkblue}}
\newcommand{\defn}[1]{\emph{\darkblue #1}}
\newtheorem{theorem}{Theorem}[section]
\newtheorem{proposition}[theorem]{Proposition}
\newtheorem{lemma}[theorem]{Lemma}
\newtheorem{corollary}[theorem]{Corollary}
\newtheorem{example}[theorem]{Example}
\newtheorem{remark}[theorem]{Remark}
\newtheorem*{proposition*}{Proposition}
\DeclareMathOperator{\Red}{\mathcal{R}}
\newcommand{\staircase}{\delta}
\newlength{\cellsize} \setlength{\cellsize}{18\unitlength}
\newsavebox{\cell}
\sbox{\cell}{\begin{picture}(18,18)
\put(0,0){\line(1,0){18}}
\put(0,0){\line(0,1){18}}
\put(18,0){\line(0,1){18}}
\put(0,18){\line(1,0){18}}
\end{picture}}
\newcommand\cellify[1]{\def\thearg{#1}\def\nothing{}%
\ifx\thearg\nothing
\vrule width0pt height\cellsize depth0pt\else
\hbox to 0pt{\usebox{\cell} \hss}\fi%
\vbox to \cellsize{
\vss
\hbox to \cellsize{\hss$#1$\hss}
\vss}}
\newcommand\tableau[1]{\vtop{\let\\\cr
\baselineskip -16000pt \lineskiplimit 16000pt \lineskip 0pt
\ialign{&\cellify{##}\cr#1\crcr}}}
\newcommand{\pic}{\begin{tikzpicture}}
\newcommand{\epic}{\end{tikzpicture}}
\newcommand{\maj}{{{\rm maj}}}
\newcommand{\N}{\mathbb{N}}
\newcommand{\Des}{\mathrm{Des}}
\newcommand{\SSYT}{{{\rm SSYT}}}
\newcommand{\SYT}{{{\rm SYT}}}
\newcommand{\LR}{{{\rm LR}}}
\newcommand{\INC}{\mathrm{INC}}
\newcommand{\Dsh}{D^{\triangledown}}
\newcommand{\RSK}{\mathrm{RSK}}
\newcommand{\ShSYT}{{{\rm ShSYT}}}
\newcommand{\ShSSYT}{{{\rm ShSSYT}}}
\newcommand{\mr}{\mathcal{R}}
\newcommand{\mh}{\mathcal{H}}
\newcommand{\rect}{\mathrm{rect}}
\newcommand{\bfa}{\mathbf{a}}
\newcommand{\bfb}{\mathbf{b}}
\newcommand{\bfi}{\mathbf{i}}
\newcommand{\bfr}{\mathbf{r}}
\newcommand{\FC}{\Phi}
\def\SW{\text{{\rm SW}}}
\def\MS{\text{{\rm MS}}}
\newcommand{\PSW}{{P_{\ts\SW}}}
\newcommand{\QSW}{{Q_{\SW}}}
\newcommand{\PMS}{{P_{\ts\MS}}}
\newcommand{\QMS}{{Q_{\MS}}}
\def\.{\hskip.06cm}
\def\ts{\hskip.03cm}
\def\nin{\noindent}
\def\wt{\widetilde}
\def\emp{\nothing}
\def\sq{\square}
\def\nn{\mathbb N}
\def\la{\lambda}
\def\si{\sigma}
\def\de{\delta}
\def\om{\omega}
\def\vr{\varrho}
\def\vp{\varphi}
\def\wt{\widetilde}
\def\<{\langle}
\def\>{\rangle}
\def\rF{ {\text {\rm F} } }
\def\rG{ {\text {\rm G} } }
\def\0{{\mathbf 0}}
\def\nothing{\varnothing}
\def\.{\hskip.06cm}
\def\ts{\hskip.03cm}
\def\hatequiv{{\ts{}\doteq\.}}
\def\spds{{{}-{}\ts}}
\def\ns{{}}
\newcommand{\Set}{\overline{\SYT}}
\newcommand{\ShSet}{\overline{\ShSYT}}
\newcommand{\ShSSet}{\overline{\ShSSYT}}
\newcommand{\PSH}{P_{SH}}
\newcommand{\QSH}{Q_{SH}}
\newcommand{\res}{\text{\sf res}}
\title[Bijecting hidden symmetries]{Bijecting hidden symmetries for skew staircase shapes}
\author[Hamaker]{Zachary Hamaker}
\address{Department of Mathematics,  University of Florida, Gainesville, FL 32611}
\email{zhamaker@ufl.edu}
\author[Morales]{Alejandro H.\ Morales}
\address{Department of Mathematics and Statistics, University of Massachusetts, Amherst, MA 01003}
\email{amorales@math.umass.edu}
\author[Pak]{Igor Pak}
\address{Department of Mathematics, University of California, Los
		Angeles, CA 90095}
\email{pak@math.ucla.edu}
\author[Serrano]{Luis Serrano}
\address{Zapata Computing Canada Inc., 325 Front St.~W, Toronto, ON, M5V 2Y1}
\email{luisgui.serrano@gmail.com}
\author[Williams (cinco amigos)]{Nathan Williams}
\address{Department of Mathematical Sciences, University of Texas at Dallas,
Richardson, TX 75080}
\email{nathan.williams1@utdallas.edu}
\thanks{\today}
\begin{document}

\begin{abstract}
We present a bijection between the set \ts $\SYT(\la/\mu)$ \ts of
standard Young tableaux of \emph{staircase minus rectangle shape} $\la=\de_k$, $\mu=(b^a)$,
and the set \ts $\ShSYT'(\eta)$ \ts of \emph{marked shifted standard Young tableaux} of a
certain shifted shape $\eta=\eta(k,a,b)$.  Numerically, this result is due to DeWitt~(2012).
Combined with other known bijections this gives a bijective proof of the product
formula for $|\SYT(\la/\mu)|$.  This resolves an open problem by Morales, Pak and
Panova (2019), and allows an efficient random sampling from $\SYT(\la/\mu)$.
Other applications include a bijection for semistandard Young tableaux, and a
bijective proof of Stembridge's symmetry of LR--coefficients of the staircase
shape.  We also extend these results to \emph{set-valued standard Young tableaux}
in the \emph{combinatorics of $K$-theory}, leading to new proofs of results by Lewis and Marberg (2019) and  Abney-McPeek,
An and Ng~(2020).
\end{abstract}

\maketitle

\section{Introduction}

The phrase `hidden symmetries' in the title refers to coincidences
between the numbers of seemingly different (yet similar) sets of combinatorial objects.
When such coincidences are discovered, they tend to be
fascinating because they reflect underlying algebraic
symmetries --- even when the combinatorial objects themselves appear to possess
no such symmetries.

It is always a relief to find a simple combinatorial explanation
of hidden symmetries.  A direct bijection is
the most natural approach, even if sometimes such a bijection is both hard to find
and to prove (cf.~$\S$\ref{ss:finrem-LR}).  Such a bijection restores order
 to a small corner of an otherwise disordered universe,
suggesting we are on the right path in our understanding.
It is also an opportunity to learn more about our combinatorial objects.

\smallskip

\subsection*{The results}
We start with the following unusual product formula.  Denote by \ts
$\de_k=(k-1,\ldots,2,1)$ \ts the \defn{staircase shape}.

\medskip

\begin{theorem}[Staircase minus rectangle, see below] \label{c:dewitt}
For all \ts  $a,b,c\in \nn$, let $\lambda=\de_{a+b+2c}$, and $\mu=(b^a)$.
Then the number  \ts $f^{\lambda/\mu} = \bigl|\SYT(\lambda/\mu)\bigr|$ \ts  is equal to
\[
n! \,  \,  \frac{\rF(a)\.\rF(b)\.\rF(c)\.
\rF(a+b+c)\ts\cdot\ts \rG(c)\.\rG(a+b+c)}{\rF(a+b)\.\rF(b+c)
\.\rF(a+c)\ts\cdot\ts  \rG(a+c)\.\rG(b+c)\.\rG(a+b+2c)}\,,
\]
where  \ts $n=|\la/\mu|=\binom{a+b+2c}{2}-ab$, \ts $\rF(m) :=1!\cdot 2! \ts \cdots \ts (m-1)!$,
and \ts $\rG(m):= \. 1!! \cdot 3!! \ts \cdots \ts (2m-3)!!$
\end{theorem}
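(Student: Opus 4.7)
The plan is to reduce the numerical identity to the paper's main bijective statement plus a standard product formula for shifted tableaux, then carry out an arithmetic simplification.

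First, I would invoke the main bijection advertised in the abstract,
\[
\Phi : \SYT(\lambda/\mu) \;\longrightarrow\; \ShSYT'(\eta),
\]
for the explicit shifted shape $\eta = \eta(k,a,b)$ with $k=a+b+2c$, whose construction is the technical heart of the paper. Granting $\Phi$, the theorem reduces to evaluating $|\ShSYT'(\eta)|$, with $|\eta|=n$.

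Second, I would apply the shifted hook-length formula for marked shifted SYT,
\[
|\ShSYT'(\eta)| \;=\; \frac{|\eta|!\cdot 2^{|\eta|-\ell(\eta)}}{\prod_{u\in \eta} h^*(u)},
\]
where $h^*(u)$ is the shifted hook length and $\ell(\eta)$ is the number of parts (the factor $2^{|\eta|-\ell(\eta)}$ encoding the choice of marking off the main diagonal). Once the staircase-trapezoidal shape $\eta(k,a,b)$ is written down explicitly, its shifted hooks split into three families corresponding to the blocks of size $a$, $b$, $c$, and the resulting product factors into ordinary factorials (packaging as $\rF$'s) and double factorials (packaging as $\rG$'s). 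A direct bookkeeping should then collapse the expression to
\[
n!\,\frac{\rF(a)\rF(b)\rF(c)\rF(a+b+c)\rG(c)\rG(a+b+c)}{\rF(a+b)\rF(b+c)\rF(a+c)\rG(a+c)\rG(b+c)\rG(a+b+2c)},
\]
matching the claim.

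The main obstacle is the bijection $\Phi$ itself: DeWitt's original proof is purely numerical, so identifying the correct target shape $\eta(k,a,b)$ and producing an explicit, shape-respecting map is nontrivial. I would expect the construction to proceed by composing a variant of shifted jeu de taquin (or shifted RSK) with a promotion-type procedure that tracks how the rectangle $(b^a)$ sits inside the staircase $\delta_{a+b+2c}$, with the markings in $\ShSYT'(\eta)$ recording the binary choices made at each step. Verifying that $\Phi$ is a bijection will likely require showing both that the output always has shape $\eta$ (a shape-control argument, probably by induction on $c$ or on the size of $\mu$) and that $\Phi$ can be inverted by an explicit reverse procedure. By contrast, once $\eta$ is in hand, the hook-length computation is routine, and the factorials combine cleanly because the hooks along rows, columns, and the diagonal separate into independent blocks indexed by $a$, $b$, $c$.
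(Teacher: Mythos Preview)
Your proposal is correct and takes essentially the same approach as the paper: the paper derives Theorem~\ref{c:dewitt} from Theorem~\ref{t:DeWitt-SYT} (the bijection $\varphi$) together with Thrall's shifted hook-length formula, exactly as you outline. Your final-paragraph speculation about how $\varphi$ is built (shifted jeu de taquin plus a promotion-type procedure) diverges from the paper's actual construction, which composes the map $\Phi$ from $\SYT(\delta_k/\mu)$ to reduced words of the fully commutative permutation $w^{\delta_k/\mu}$ with the Worley--Sagan recording tableau $Q_{\SW}$, but that is the content of Theorem~\ref{t:DeWitt-SYT} rather than of the present statement.
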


\smallskip

This curious formula was first derived by DeWitt~\cite{DeW} in a somewhat different
form.  The $q$-version was given in~\cite{KS}, and further generalizations were obtained
in~\cite{MPP3} for a more general class of skew shapes.
%
To understand the product formula in the theorem, consider the following:

\smallskip

\begin{theorem}[DeWitt]\label{t:DeWitt-SYT}
Let $\la=\de_k$ be a staircase,
$\mu=(b^a)$ be a rectangle, such that \. $a+b<k$.   Then:
\begin{equation}\label{e:main}
\bigl|\SYT(\la/\mu)\bigr| \, = \, 2^N \. \bigl|\ShSYT(\eta)\bigr|\ts,
\end{equation}
where \ts $\ShSYT(\eta)$ \ts is the set of \emph{shifted standard Young tableaux}
of shifted shape \ts $\eta=\eta(k,a,b)$ \ts defined in \ts {\rm ~\Cref{f:intro}}, and \ts
$N:=|\eta|-k+a = \binom{k}{2}-ab-k+a$.
\end{theorem}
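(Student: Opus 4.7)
The plan is to construct an explicit bijection $\Phi: \SYT(\la/\mu) \to \ShSYT'(\eta)$, where $\ShSYT'(\eta)$ denotes marked (primed) shifted standard Young tableaux of shape $\eta$. Since the off-diagonal cells of a shifted shape with $k-a$ diagonal cells number $|\eta|-(k-a) = N$, and each carries an independent marking bit, one has $|\ShSYT'(\eta)| = 2^N \cdot |\ShSYT(\eta)|$; so producing any such $\Phi$ immediately yields \eqref{e:main}. Before building $\Phi$, I would verify the arithmetic: from $|\la/\mu|=\binom{k}{2}-ab$ and the stated value of $N$, one computes $|\eta|=|\la/\mu|$, so $\Phi$ will be length-preserving at the level of cells.

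For the forward map I would adapt a shifted insertion algorithm in the style of Sagan--Worley or Haiman's mixed insertion. The idea is to read the entries of $T \in \SYT(\la/\mu)$ in a canonical linear order (reverse column order seems natural, so that the staircase boundary is processed symmetrically) and to insert them one at a time into an initially empty shifted tableau, while simultaneously recording a shifted recording tableau and a sequence of marks. Each insertion either terminates by creating a new cell on the diagonal, or terminates strictly off the diagonal — in the latter case, the two combinatorially distinct bumping behaviors available are recorded by a marking bit, which collectively produces the $2^N$ factor. Invertibility is then established by the standard reverse-bumping paradigm: one reconstructs the last inserted entry from the final cell of the recording tableau, reverses one bump step using the mark, and iterates. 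The hypothesis $a+b<k$ should enter as the condition that guarantees $\eta(k,a,b)$ has a non-degenerate diagonal of length $k-a>0$, so that the insertion can actually deposit the required number of diagonal cells.

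The main obstacle I anticipate is that the source is a skew shape with a rectangular hole rather than a straight shape, so the classical straight-shape shifted insertion theorems do not apply directly. I see two natural remedies. The first is to rectify $T$ via jeu de taquin to some straight-shape tableau, apply a straight-shape shifted correspondence, and then absorb the rectification data into the marks; the subtle point here is that the rectification path must be canonically determined by $\la/\mu$ alone so that the map is well-defined. The second, cleaner approach is to design a direct mixed insertion that exploits the self-conjugacy of $\de_k$, using the anti-diagonal symmetry of the staircase to pair cells across the diagonal and treating the rectangle $(b^a)$ as a symmetric truncation. Either way, the crucial verification is that the output lands in the shifted shape $\eta(k,a,b)$ specified in \Cref{f:intro}, and not some other shifted shape; this is a shape-tracking argument that I expect to reduce to a careful diagonal/off-diagonal cell count plus induction on $k$, with the base case treating $\mu=\nothing$ (the classical staircase bijection) and the inductive step peeling off a row or column of the rectangle.
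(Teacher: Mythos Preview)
Your high-level strategy is right and matches the paper: build a bijection $\SYT(\delta_k/b^a)\to\ShSYT'(\eta)$ and read off the $2^N$ factor from the marks on off-diagonal cells. The paper's map, like yours, is ultimately realized through Worley--Sagan (and, in an alternate formulation, Haiman mixed) insertion, and your description of the marks as recording which of two bump terminations occurred is exactly how $\QSW$ behaves.

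Where your proposal diverges from the paper, and where it has a genuine gap, is in \emph{what} gets inserted and \emph{how} you certify the output shape. The paper does not insert the entries of $T$ read off in some column order. Instead it first applies the Billey--Jockusch--Stanley map $\FC$ to send $T\in\SYT(\delta_k/\mu)$ to a reduced word $\bfa\in\Red(w^{\delta_k/\mu})$ of a specific fully commutative permutation, and then applies Worley--Sagan to $\bfa$; the bijection is $\vp=\QSW\circ\FC$. The reduced-word intermediary is what makes the Knuth/shifted-Knuth machinery bite: one shows $\Red(w^{\delta_k/\mu})$ is a union of shifted Knuth classes (because the first two letters of any such reduced word have the same parity, so swapping them is a commutation), and this is what forces $\QSW$ to land in a controlled multiset of shifted shapes. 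Your plan of inserting tableau entries directly skips this structure, and without it there is no mechanism forcing the image to be a single $\ShSYT'(\eta)$ rather than a union over several shifted shapes.

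The second gap is the shape-tracking step you flag yourself. In the paper this is the technical heart (Proposition~3.7 together with the five-step RSK/mixed-insertion/jeu-de-taquin construction following Lemma~3.8): one builds an explicit injection $\psi:\ShSYT'(\vr_k-\tau^{a,b})\to\SYT(\delta_k/b^a)$ by padding a marked shifted tableau with a canonical block, running inverse Worley--Sagan against the minimal increasing tableau $M^{\vr_k}$, and checking by direct computation that the trailing $ab$ letters of the resulting word are forced; a matching injection in the other direction then closes the argument. This is not an induction on $k$ that peels rows or columns of the rectangle; the bijection has no evident inductive decomposition of that kind, and your proposed induction would need an entirely new idea to get off the ground. (The paper does note, in its final remarks, a crystal-theoretic alternative that inserts a particular complemented reading word via mixed insertion---close in spirit to your second remedy---but even there the specific reading order and complementation are essential, and the shape claim still rests on the reduced-word analysis.)
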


\smallskip

\begin{figure}[hbt]
		\centering
		\includegraphics[height=3.2cm]{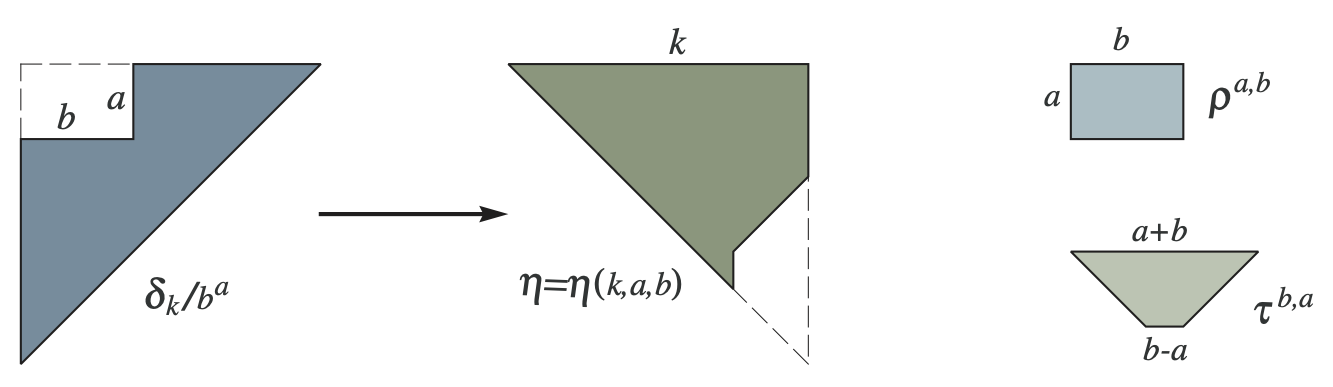}
		\caption{Main bijection \. $\vp \ts : \. \SYT(\de_k/b^a) \ts\longrightarrow \ts\ShSYT'(\eta)$.}
		\label{f:intro}
	\end{figure}

\smallskip

Our main result is a bijective proof of~\eqref{e:main}, where we interpret
the RHS as the number \ts $\bigl|\ShSYT'(\eta)\bigr|$\ts of certain
\emph{marked shifted standard Young tableaux} (see below).  For the case of straight
shapes, i.e.\ for $a=b=0$ and $\mu=\emp$, an equivalent bijection was given
by Purbhoo~\cite{Pur} (see $\S$\ref{ss:finrem-marv}).
Our bijection has additional properties, as it extends to the proof of a
symmetric function identity (see~\Cref{t:dewitt}).  \Cref{c:dewitt}
follows from~\Cref{t:DeWitt-SYT} and the \emph{hook-length formula} (HLF)
for shifted shapes due to Thrall~\cite{Thr}.  Combined with the bijection
in~\cite{Fis}, this gives the first direct bijective proof of~\Cref{c:dewitt},
resolving an open problem in~\cite[$\S$9.2]{MPP3}.

\smallskip

\begin{theorem}\label{t:DeWitt-random}
Let $\la=\de_k$ be a staircase, $\mu=(b^a)$ be a rectangle, s.t.\ $a+b<k$.
Then there is a $O(k^3\log k)$ time algorithm for uniform random generation of
standard Young tableaux in $\SYT(\la/\mu)$.
\end{theorem}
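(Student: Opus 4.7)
The plan is to use the bijection $\vp : \SYT(\la/\mu) \to \ShSYT'(\eta)$ constructed in \Cref{t:DeWitt-SYT} to reduce the problem to uniform sampling on $\ShSYT'(\eta)$, and then apply $\vp^{-1}$ to the output. Because $\vp$ is a bijection, a uniform sample on $\ShSYT'(\eta)$ pulls back to a uniform sample on $\SYT(\la/\mu)$, so the correctness of the algorithm is automatic; the entire content of the theorem is the complexity bound.

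Sampling from $\ShSYT'(\eta)$ factors as a product sample. A marked shifted SYT is, by definition, an element of $\ShSYT(\eta)$ decorated by an independent $\{0,1\}$-label at each of $N = \binom{k}{2} - ab - k + a$ markable cells, so I would first sample an unmarked $T \in \ShSYT(\eta)$ uniformly, then flip $N$ fair coins to choose the marks. Since $N = O(k^2)$, the marking phase costs $O(k^2)$ time and makes no contribution to the leading term.

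For the core step---sampling $T \in \ShSYT(\eta)$ uniformly---I would invoke Sagan's shifted analog of the Greene--Nijenhuis--Wilf hook walk. Setting $n := |\eta| = \Theta(k^2)$, the algorithm performs $n$ independent hook walks on shrinking shifted subshapes of $\eta$ to determine the positions of the entries $n, n-1, \ldots, 1$. Each walk has expected length $O(\sqrt{n})$, and with suitable data structures (balanced partial-sum trees indexed by the current hook and its continuation on the staircase diagonal) each step can be carried out in $O(\log n)$ time while supporting efficient updates as cells are removed. This yields an expected total running time of $O(n^{3/2}\log n) = O(k^3 \log k)$, matching the claimed bound. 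To turn the output into a member of $\SYT(\la/\mu)$, apply $\vp^{-1}$: since $\vp$ is defined in the body of the paper through local combinatorial moves on tableaux (analogous to jeu-de-taquin slides and diagonal toggles, cf.\ \Cref{f:intro}), the inverse can be implemented in $O(n^{3/2})$ time and is comfortably dominated by the sampling step.

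The main obstacle I expect is the analytic part: establishing the expected $O(\sqrt{n})$ walk-length estimate for Sagan's algorithm on the specific shifted shape $\eta = \eta(k,a,b)$, rather than only in the rectangular or unrestricted-staircase cases. This requires controlling the hook-sum quantities that govern transition probabilities uniformly in the parameters $a,b,c$, and checking that the usual second-moment argument survives on $\eta$. A secondary concern is writing $\vp^{-1}$ as a sequence of strictly local operations so that its runtime is genuinely $O(n^{3/2})$ rather than $\Omega(n^2)$; given the structural description of $\vp$, this should be straightforward, but must be verified alongside the bijection itself.
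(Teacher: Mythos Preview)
Your overall strategy---sample uniformly from $\ShSYT'(\eta)$ using Sagan's shifted hook walk (or Fischer's bijection), then push through $\vp^{-1}$---is exactly what the paper does. The difference lies entirely in the complexity bookkeeping, and there you have two things backwards.

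First, the hook-walk analysis is much simpler than you suggest. The shifted shape $\eta$ sits inside a $k\times k$ box, so every hook has length $O(k)$ and every walk terminates in $O(k)$ steps \emph{deterministically}. There is no need for an expected-length estimate, no second-moment argument, and no dependence on the particular parameters $a,b$. With $n=|\eta|=O(k^2)$ walks of $O(k)$ steps each, the sampling phase costs $O(k^3)$; the extra $\log n$ factor from partial-sum trees is unnecessary since each hook-walk step is a single uniform choice in a set of size $O(k)$.

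Second, your description of $\vp$ as ``local combinatorial moves on tableaux (analogous to jeu-de-taquin slides and diagonal toggles)'' is not how the map is defined: in the paper $\vp = Q_{\SW}\circ\Phi$, i.e.\ the heap bijection $\Phi$ followed by Worley--Sagan insertion. The inverse therefore requires running inverse Worley--Sagan insertion on a word of length $O(k^2)$, and this is where the $\log k$ actually enters: the paper cites the RSK complexity bound $O(k^3\log k)$ for this step, which then dominates the $O(k^3)$ sampling cost. So the bottleneck is the bijection, not the hook walk---the reverse of what your writeup suggests.
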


\smallskip

For the proof, we combine the bijection for~\eqref{e:main} and the known
uniform random generation algorithms for shifted shapes: either the
NPS--style \emph{two-dimensional bubble sorting}\footnote{This type of insertion is a close
relative to \emph{jeu-de-taquin} and \emph{promotion}, all heavily studied in this context.}
in~\cite{Fis}, or the GNW--style
\emph{hook walk} in~\cite{Sag-GNW}.

The uniform random generation of combinatorial objects is a classical problem
that is well understood for many planar structures (see~$\S$\ref{ss:finrem-random}).
For standard Young tableaux
of skew shapes, the iterative application of \emph{Feit's determinantal formula}~\cite{Feit}
gives an easy \ts $O\bigl(k^{2+\om}\bigr)$ \ts time algorithm.
Here \ts $\om\ge 2$ \ts is the \ts \emph{matrix multiplication constant} \ts with currently
best known upper bound \ts $\om<2.3729$~\cite{AW}. Our algorithm is thus a substantial
improvement over this approach.

\smallskip

Our final application is a bijective proof of the following unusual
symmetry of \emph{Littlewood--Richardson} (LR--) \emph{coefficients} \.
$c^\la_{\mu\nu}=\bigl|\LR(\la/\mu,\nu)\bigr|$, where
\ts $\LR(\la/\mu,\nu)\subseteq \SSYT(\la/\mu,\nu)$ \ts denotes the set of
\emph{LR--tableaux} (see e.g.~\cite[$\S$4.9]{Sag-book}).
\smallskip

\begin{corollary}[{of~\Cref{t:skew}}]\label{c:skew-LR}
Let $\mu,\nu \subseteq \staircase_k$, such that $|\mu|+|\nu|=|\staircase_k|$.
Then: \ts
\begin{equation}\label{e:Stem-LR}
c^{\delta_k}_{\mu\nu} \, = \, c^{\delta_k}_{\mu'\nu} \ ,
\end{equation}
where $\mu'$ denotes the \emph{conjugate partition} of~$\mu$.
\end{corollary}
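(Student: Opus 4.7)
The plan is to reduce \Cref{c:skew-LR} to an $\omega$-invariance statement and then to invoke \Cref{t:skew} to supply it.

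First, I would reformulate the claim symmetric-functionally. By the standard adjunction $c^{\delta_k}_{\mu\nu} = \langle s_{\delta_k/\mu},\, s_\nu\rangle$, and since both sides of \eqref{e:Stem-LR} have the same total degree (because $|\mu|+|\nu|=|\delta_k|$ forces $|\mu'|=|\mu|$), the corollary is equivalent to the identity
\begin{equation*}
s_{\delta_k/\mu} \; = \; s_{\delta_k/\mu'}
\end{equation*}
in the ring $\Lambda$ of symmetric functions.

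Second, I would bring in the Hall involution $\omega\colon s_\lambda \mapsto s_{\lambda'}$. From Jacobi--Trudi one has $\omega(s_{\lambda/\mu}) = s_{\lambda'/\mu'}$, and since $\delta_k$ is self-conjugate this gives $\omega(s_{\delta_k/\mu}) = s_{\delta_k/\mu'}$. Consequently, the identity from the previous step is equivalent to the assertion that $s_{\delta_k/\mu}$ is $\omega$-fixed.

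Third, I would invoke \Cref{t:skew}. I expect it to provide a skew analog of~\eqref{e:main}, expressing $s_{\delta_k/\mu}$ as a nonnegative integer combination of Schur $P$-functions $P_\eta$ (the natural generating functions of the marked shifted objects produced by the bijection $\vp$). Since the subring $\Gamma\subset\Lambda$ generated by the $P_\eta$ coincides with the $\omega$-fixed subring $\Z[p_1,p_3,p_5,\dots]$, and $\omega(P_\eta)=P_\eta$ for every strict partition $\eta$, any such expansion is automatically $\omega$-invariant. Combined with the second step, this gives $s_{\delta_k/\mu} = s_{\delta_k/\mu'}$, and taking the inner product with $s_\nu$ yields \eqref{e:Stem-LR}. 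For a fully bijective version, one composes the tableaux bijection of \Cref{t:skew} for $\mu$ with the inverse of the corresponding bijection for $\mu'$, provided both target the same set of marked shifted objects.

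The main obstacle is precisely this last point: one must check that the shifted data $\eta$ and the multiplicities extracted from $\delta_k/\mu$ by \Cref{t:skew} depend on $\mu$ in a conjugation-invariant way (equivalently, that $s_{\delta_k/\mu}$ really lies in $\Gamma$). Geometrically, I expect this to follow because the shape $\eta$ and weights are read off from the self-conjugate data of $\delta_k/\mu$ (for instance its intersection with its transpose and the common hook-content along the antidiagonal of $\delta_k$), all of which are fixed under $\mu\leftrightarrow\mu'$; alternatively, it should be visible from the construction of $\vp$ that marking and shifting commute with the transpose symmetry of $\delta_k$.
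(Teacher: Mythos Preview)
Your first step is exactly right, and in fact it already finishes the proof: in the paper, \Cref{t:skew} \emph{is} the identity $s_{\delta_k/\mu}=s_{\delta_k/\mu'}$ (Stembridge's theorem), not a Schur $P$--positivity statement. So after the adjunction $c^{\delta_k}_{\mu\nu}=\langle s_{\delta_k/\mu},s_\nu\rangle$, invoking \Cref{t:skew} gives the corollary immediately. Your steps~2--3 (reduce to $\omega$-invariance, then use a Schur $P$--expansion) are a valid and well-known way to \emph{prove} \Cref{t:skew} from the Ardila--Serrano result, which in the paper is \Cref{t:skew-AS}; but as a derivation of the corollary from \Cref{t:skew} they are a detour, and the ``obstacle'' you flag evaporates once you know $s_{\delta_k/\mu}\in\wt\Lambda$, since $\wt\Lambda$ is $\omega$-fixed.

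The paper's own proof of \Cref{c:skew-LR} is more explicitly bijective than your outline: it composes the bijection~\eqref{e:stem} (which realizes \Cref{t:skew} at the level of standard tableaux, via $\vp$, the complementation $T\mapsto T^\circ$ on marked shifted tableaux, and $(\vp')^{-1}$) with RSK and $\RSK^{-1}$ to pass between LR--tableaux. The conjugation-invariance issue you worry about is handled there concretely by \Cref{t:neg} together with the observation that $\FC(T')=(n-a_1,\dots,n-a_p)$ when $\FC(T)=(a_1,\dots,a_p)$, so $\vp$ applied to $\SYT(\delta_k/\mu)$ and to $\SYT(\delta_k/\mu')$ hit the same multiset of marked shifted tableaux, related by~$\circ$.
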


\smallskip

Using other symmetries of the LR--coefficients, equation~\eqref{e:Stem-LR}
implies
\begin{equation}\label{eq:LR-sym}
c^{\delta_k}_{\mu\nu} \, = \, c^{\delta_k}_{\mu'\nu} \, = \,c^{\delta_k}_{\mu\nu'}\, = \, c^{\delta_k}_{\mu'\nu'}
\, = \, c^{\delta_k}_{\nu'\mu'} \, = \, c^{\delta_k}_{\nu\mu'} \, = \, c^{\delta_k}_{\nu'\mu} \, = \,c^{\delta_k}_{\nu\mu} \ .
\end{equation}
Note that there are two more symmetries (see e.g.~\cite{BZ}):
$$
c^{\delta_k}_{\mu\nu} \, = \, c^{\mu^\ast}_{\delta_k\nu} \, = \, c^{\nu^\ast}_{\mu \delta_k}\,,
$$
where \ts $\lambda^\ast$ \ts denotes the complement to~$\lambda$ in the \ts
$(k-1)\times k$ \ts rectangle, so that \ts $\delta_k^\ast = \delta_k$.
This triples the number of equal LR--coefficients given by~\eqref{eq:LR-sym},
with the staircase $\de_k$ as one of the partitions.
In the next section we explain the algebra behind both~\Cref{t:DeWitt-SYT}
and~\Cref{c:skew-LR}.

\medskip

\subsection*{Algebraic interpretation}
The ring of symmetric functions $\Lambda$ has many bases indexed by integer partitions, including the Schur functions $s_\lambda$.
The subring $\wt{\Lambda}$ generated by odd power sums $p_k = x_1^k + x_2^k + \dots$ has bases indexed by strict integer partitions,
one being the \defn{Schur $P$--functions}.
Our proof of~\Cref{t:DeWitt-SYT} leads to bijective explanations for some relations between skew staircase Schur functions and Schur $P$--functions.

To state the identities in question, we introduce some notation.
Let \ts $\mu = (\mu_1, \ldots,\mu_k)$ \ts and \ts
$\lambda = (\lambda_1, \ldots, \lambda_k)$ \ts be integer partitions, and let $\mu'$ be the transpose of~$\mu$.
We write $\mu \subseteq \lambda$ if $\mu_i \leq \lambda_i$ for all~$i$, and let $s_{\lambda/\mu}$
denote the associated \defn{skew Schur function}.

A partition $\lambda$ with \ts $\lambda_1 > \cdots > \lambda_k$ \ts is called \defn{strict}.
When $\mu \subseteq \lambda$ are both strict, let $P_{\lambda/\mu}$ denote the associated \defn{skew Schur $P$--function}.
As before, let $\staircase_n = (n-1,n-2,\dots,1)$ be the staircase partition and $\vr_n$ be the same partition, viewed as a shifted shape.

\smallskip

\begin{theorem}[{J.~Stembridge, 2004, see~$\S$\ref{ss:finrem-hist}}]\label{t:skew}
	Let \ts $\mu \subseteq \staircase_n$\ts.
	Then \. $s_{\delta_n/\mu} = s_{\delta_n/\mu'}$\ts.
\end{theorem}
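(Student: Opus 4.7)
The plan is to convert Stembridge's identity into an $\omega$-invariance statement and then establish $\omega$-invariance through the paper's main bijection. Recall the standard involution $\omega:\Lambda\to\Lambda$ with $\omega(p_k)=(-1)^{k-1}p_k$ and $\omega(s_\lambda)=s_{\lambda'}$. Applying $\omega$ to the expansion $s_{\delta_n/\mu}=\sum_\nu c^{\delta_n}_{\mu\nu}\,s_\nu$, and using the self-conjugacy $\delta_n=\delta_n'$ together with the transposition identity $c^{\lambda'}_{\mu'\nu'}=c^\lambda_{\mu\nu}$, yields $\omega(s_{\delta_n/\mu})=s_{\delta_n/\mu'}$. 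Hence the theorem is equivalent to the statement that $s_{\delta_n/\mu}$ is fixed by $\omega$.

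A clean way to certify $\omega$-invariance is to exhibit $s_{\delta_n/\mu}$ as a nonnegative integer combination of Schur $P$-functions $P_\lambda$ of strict partitions. These lie in the subring $\widetilde\Lambda=\mathbb{Z}[p_1,p_3,p_5,\ldots]\subseteq\Lambda$, which is pointwise $\omega$-fixed since each odd power sum is. So the goal reduces to producing an identity
\begin{equation*}
s_{\delta_n/\mu} \, = \, \sum_{\lambda\text{ strict}} a^\lambda_\mu \, P_\lambda,
\end{equation*}
ideally bijectively. For the rectangular case $\mu=(b^a)$, the main bijection $\varphi$ of \Cref{t:DeWitt-SYT} should promote from SYT to SSYT (indeed to marked shifted SSYT), yielding a single-term identity of the form $s_{\delta_k/(b^a)} = c\cdot P_\eta$ for an explicit constant $c$, or equivalently a direct identification with the marked-shifted generating function $Q_\eta$.

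For general $\mu\subseteq\delta_n$ one must go further. I would pursue either (i) a recursive reduction that peels rows or columns from $\mu$ and iterates the main bijection on rectangular pieces, or (ii) a direct extension of $\varphi$ sending each SSYT of $\delta_n/\mu$ to a pair consisting of a strict partition $\lambda$ and a marked shifted SSYT of shape $\lambda$, summed over $\lambda$ to realize the expansion above. The hard part is precisely this extension: the geometry of $(b^a)$ is what pins down a canonical target shape $\eta(k,a,b)$, whereas for irregular $\mu$ the target must depend on $\mu$ in a more delicate, piecewise way, with no obvious single strict partition playing the role of $\eta$. Should a direct bijective extension prove too rigid, one can fall back on Stembridge's original argument for the $P$-expansion and finish via the $\omega$-invariance reduction above; this still recovers \Cref{c:skew-LR}, but a hands-on bijection between $\LR(\delta_n/\mu,\nu)$ and $\LR(\delta_n/\mu',\nu)$ would better match the philosophy of the paper.
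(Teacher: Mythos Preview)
Your reduction is correct and is the natural algebraic framing: since $\delta_n=\delta_n'$, one has $\omega(s_{\delta_n/\mu})=s_{\delta_n/\mu'}$, so the theorem is equivalent to $s_{\delta_n/\mu}\in\widetilde\Lambda$. But the proposal stops short of proving that membership. Your option~(i), peeling off rectangles, does not work: removing a rectangle from $\delta_n$ does not leave another staircase-minus-something to which the main bijection applies, and there is no obvious branching rule that stays inside the family. Your option~(ii) is on the right track but is left as a wish; the sentence about ``falling back on Stembridge's original argument for the $P$-expansion'' is circular or at best an appeal to an outside algebraic proof (Ardila--Serrano), which defeats the purpose.

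The point you are missing is that the map $\varphi=\QSW\circ\Phi$ already handles arbitrary $\mu\subseteq\delta_n$, not just rectangles; no extension is needed. What is special about the rectangle case is only that the image is a \emph{single} shifted shape. For general $\mu$, the map $\Phi$ still identifies $\SYT(\delta_n/\mu)$ with $\Red(w^{\delta_n/\mu})$ for an explicit fully commutative permutation $w^{\delta_n/\mu}$ (\Cref{l:skewFC}). The decisive observation is parity: in any $\bfa\in\Red(w^{\delta_n/\mu})$ the first two letters are both odd, hence commute, so the set is closed under the extra shifted Knuth move and is therefore a union of shifted Knuth classes (\Cref{p:shK2}). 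By~\Cref{t:sw}(2), $\QSW$ then carries $\Red(w^{\delta_n/\mu})$ bijectively onto $\bigsqcup_\lambda \ShSYT'(\lambda)$ over some multiset of strict shapes, and the descent behaviour (\Cref{t:sw}(3), \Cref{t:fc}) upgrades this to $s_{\delta_n/\mu}=\sum_\lambda a_\lambda P_\lambda$ with $a_\lambda\in\mathbb{Z}_{\ge 0}$. This is exactly your desired $P$-expansion, proved bijectively, and your $\omega$-invariance reduction then finishes the theorem. The paper goes one step further and avoids even the appeal to $\omega$: using $\Phi(T')=(n-a_1,\dots,n-a_p)$ together with Haiman's $\QSW(-\bfa)=\QSW(\bfa)^\circ$ (\Cref{t:neg}) yields a direct descent-preserving bijection $\SYT(\delta_n/\mu)\to\SYT(\delta_n/\mu')$, namely $(\varphi')^{-1}\circ{}^{\circ}\circ\varphi$.
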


\smallskip

Stembridge's theorem immediately implies~\Cref{c:skew-LR};
it is in fact equivalent to the corollary.  The next result is an algebraic
explanation of the phenomenon in Stembridge's theorem:

\smallskip

\begin{theorem}[{\cite[Thm~4.10]{AS}}]\label{t:skew-AS}
	Let $\mu \subseteq \staircase_n$.  Then \ts $s_{\delta_n/\mu} \in \wt \Lambda$.
	Moreover, in the expansion \ts $s_{\delta_n/\mu} \. = \. \sum_{\nu} \ts  a_\nu P_\nu$\ts, \ts
all $a_\nu$ \ts are non-negative integers.
\end{theorem}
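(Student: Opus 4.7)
The plan is to prove the stronger positivity statement (ii) first; since every Schur $P$-function lies in $\wt\Lambda$, the positivity implies membership (i). As a further payoff, both \Cref{t:skew} and \Cref{c:skew-LR} would then follow as corollaries, giving a unified treatment.

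For the rectangular case $\mu = (b^a)$, the symmetric-function version of the main bijection (\Cref{t:DeWitt-SYT}, extended to semistandard tableaux via the paper's \Cref{t:dewitt}) gives
\[
s_{\delta_n/(b^a)} \, = \, 2^N \, P_{\eta(n,a,b)},
\]
an honest $P$-positive expansion with a single strict partition on the right; so the rectangular case handles itself. For general $\mu \subseteq \delta_n$, I would aim to construct a weight-preserving bijection
\[
\Psi \ : \ \SSYT(\delta_n/\mu) \ \longrightarrow \ \bigsqcup_{\nu\text{ strict}} \. \ShSSYT(\nu) \times \{1, 2, \ldots, a_\nu\}
\]
generalizing $\vp$ from rectangular $\mu$ to arbitrary $\mu$. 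Once $\Psi$ is in hand, positivity and integrality of the $a_\nu$ are immediate, and the weight-preservation produces the desired symmetric-function identity. A natural intermediate step is to combine the Littlewood--Richardson expansion $s_{\delta_n/\mu} = \sum_\lambda c^{\delta_n}_{\mu\lambda} s_\lambda$ with a shifted LR-style rule that converts each $s_\lambda$ (or each pair $s_\lambda + s_{\lambda'}$) into a $P$-positive combination, using the constraint $\lambda \subseteq \delta_n$ to keep the sum restricted to a combinatorially tractable class.

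The main obstacle will be identifying which strict $\nu$ arise in the range of $\Psi$ and with what multiplicities $a_\nu$. The rectangular case produces the single $\nu = \eta(n,a,b)$ with $a_\nu = 2^N$, but for general $\mu$ several strict partitions appear, each with an intricate count. Overcoming this will likely require a shifted jeu-de-taquin or Stembridge-style reading-word algorithm to stabilize the output shape, paralleling the role that jeu-de-taquin plays for straight shapes. A secondary challenge is ensuring that any marking (sign) data produced by the generalized $\vp$ behaves compatibly across the strata indexed by $\nu$, so that the disjoint union on the right-hand side assembles cleanly into $\sum_\nu a_\nu P_\nu$ rather than into a signed sum that requires a further cancellation argument.
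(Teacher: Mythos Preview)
Your proposal is a research outline rather than a proof: the obstacles you name at the end are precisely the content that needs to be supplied, and none of the concrete steps you sketch actually closes the gap. Two specific problems. First, the rectangular formula you quote is wrong at the symmetric-function level: \Cref{t:dewitt} says $s_{\delta_n/\rho^{\ell,m}} = P_{\vr_n - \tau^{\ell,m}}$ with no $2^N$, and $\vr_n - \tau^{\ell,m}$ is a skew shifted shape, not a single strict partition, so even the rectangular case does not give ``a single strict partition on the right'' in the sense required by the statement. Second, the Littlewood--Richardson route you propose cannot work as stated: an individual $s_\lambda$ is \emph{not} Schur~$P$-positive in general, and the constraint $\lambda \subseteq \delta_n$ does not salvage this. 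There is no known ``shifted LR-style rule that converts each $s_\lambda$ into a $P$-positive combination,'' so the plan stalls exactly where you acknowledge it does.

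The paper's argument avoids all of this by working at the level of reduced words. The map $\vp = \QSW \circ \FC$ is already defined for arbitrary $\mu \subseteq \delta_n$: by \Cref{l:skewFC}, $\FC$ sends $\SYT(\delta_n/\mu)$ bijectively to $\Red(w^{\delta_n/\mu})$. The single key observation (\Cref{p:shK2}) is that $\Red(w^{\delta_n/\mu})$ is a union of \emph{shifted} Knuth classes, because the first two letters of any reduced word in it have the same parity, so swapping them is a commutation relation. Then \Cref{t:sw}(2) says Worley--Sagan insertion carries each shifted Knuth class bijectively to a full set $\ShSYT'(\nu)$, and since $\vp$ reverses descent sets (\Cref{t:sw}(3) and \Cref{t:fc}), \Cref{c:fundamental} and \Cref{l:rev-comp} give $s_{\delta_n/\mu} = \sum_{\nu \in M} P_\nu$ for some multiset $M$ of strict partitions. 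No identification of the multiplicities $a_\nu$, no shifted jeu-de-taquin stabilization, and no sign-cancellation argument is needed.
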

%
%
This theorem was originally conjectured by Stanley in 2001,
and proved by Ardila and Serrano using algebraic methods.  We give
a bijective proof of~\Cref{t:skew} that also implies~\Cref{t:skew-AS}, see bijection in~\eqref{e:stem} below.

\medskip

For $\mu \subseteq \vr_n$ strict, write $\vr_n\spds\mu$ for the partition whose diagram is obtained by reflecting the diagram of $\vr_n/\mu$ across the line $y=x$.
Let $\rho^{\ell,m} = (m^\ell)$ be the $\ell \times m$ rectangle partition, and when $\ell > m$ let $\tau^{\ell,m} = (\ell{+}m{-}1,\ell{+}m{-}3,\dots,\ell{-}m{+}1)$ be the shifted trapezoid, see~\Cref{f:intro}.

\smallskip

\begin{theorem}[{\cite[Thm~V.3]{DeW}}] \label{t:dewitt}
Let \ts $\ell + m < n$. Then:
$$
s_{\delta_n/\rho^{\ell,m}} \. = \. P_{\vr_n/\tau^{\ell,m}} \. = \. P_{\vr_n\spds\tau^{\ell,m}}
$$
\end{theorem}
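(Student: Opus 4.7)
The plan is to derive \Cref{t:dewitt} from the main bijection \(\varphi\colon\SYT(\delta_n/\rho^{\ell,m})\to\ShSYT'(\vr_n/\tau^{\ell,m})\) underlying \Cref{t:DeWitt-SYT}, by promoting it from a cardinality identity to an identity of quasisymmetric (and hence symmetric) generating series. Both equalities can then be reduced to two ingredients: descent-compatibility of \(\varphi\), and a reflection symmetry for shifted tableaux.

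For the first equality \(s_{\delta_n/\rho^{\ell,m}}=P_{\vr_n/\tau^{\ell,m}}\), I would expand both sides in the fundamental quasisymmetric basis. On the Schur side this is Gessel's formula
\[
s_{\delta_n/\rho^{\ell,m}} \,=\, \sum_{T\in\SYT(\delta_n/\rho^{\ell,m})} F_{\Des(T)},
\]
and on the Schur-\(P\) side it is Stembridge's peak-based expansion, which writes \(P_{\vr_n/\tau^{\ell,m}}\) as a sum of fundamental quasisymmetric functions indexed by marked shifted standard tableaux together with a suitable descent set. The key step is to verify that \(\varphi\) matches these statistics, i.e., that \(\Des(T)=\Des(\varphi(T))\) under the appropriate shifted convention. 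Since \(\varphi\) is built out of local growth-type moves, this compatibility can be verified one move at a time. Alternatively, one can refine \(\varphi\) directly to a weight-preserving bijection between semistandard objects via a standardization/unstandardization round trip; in either case the first equality follows.

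For the second equality \(P_{\vr_n/\tau^{\ell,m}}=P_{\vr_n\spds\tau^{\ell,m}}\), my approach is a direct reflection bijection at the tableau level. The shifted staircase \(\vr_n\) is symmetric under reflection in the diagonal \(y=x\), so reflecting the cells of \(\vr_n/\tau^{\ell,m}\) produces a congruent shape, which by definition is \(\vr_n\spds\tau^{\ell,m}\). The trapezoidal form \(\tau^{\ell,m}=(\ell+m-1,\ell+m-3,\ldots,\ell-m+1)\) is exactly what makes the reflected region the diagram of a straight partition rather than a proper skew shape. A marked shifted semistandard tableau of \(\vr_n/\tau^{\ell,m}\) can then be transported to one of \(\vr_n\spds\tau^{\ell,m}\) by reflecting and swapping the role of primed and unprimed entries on non-diagonal cells; a routine check of the column-strict and row-weak conditions shows that this reflection is a weight-preserving involution.

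The main obstacle will be the descent-compatibility of \(\varphi\). The classical descent statistic on \(\SYT(\delta_n/\rho^{\ell,m})\) must correspond, under \(\varphi\), to the peak-based descent statistic on marked shifted tableaux in \(\ShSYT'(\vr_n/\tau^{\ell,m})\); tracking how the local moves of \(\varphi\) interact with descent positions, and how the marking data on non-diagonal entries encodes the ascent/descent choice, is the delicate technical step. Once this is in place, the first equality follows immediately by comparison of the two quasisymmetric expansions, and the second is then a shape-level reflection.
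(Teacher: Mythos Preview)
Your broad outline (promote the bijection to a quasisymmetric identity via descent sets, then reflect for the second equality) is the paper's strategy, but there are two concrete problems with how you have set it up.

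First, the descent relationship is wrong. The paper's bijection is the composite $\varphi = Q_{\SW}\circ\Phi$, and by \Cref{t:sw}(3) together with \Cref{t:fc} it \emph{reverses} descent sets: $\Des(\varphi(T))=\Des(T)^r$, not $\Des(\varphi(T))=\Des(T)$. This is why the paper needs \Cref{l:rev-comp} (the evacuation symmetry $s_\lambda=\sum_T F_{\Des(T)^r}$) to close the argument. Your proposed verification ``$\Des(T)=\Des(\varphi(T))$'' would simply fail on examples. Relatedly, $\varphi$ is not ``built out of local growth-type moves'': it is a reduced-word encoding followed by Worley--Sagan insertion, and the descent behaviour is not checked move by move but inherited wholesale from known properties of $\Phi$ and $Q_{\SW}$.

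Second, and more seriously, you have inverted the logical dependency and misidentified the hard step. Descent compatibility is \emph{free} from the definition of $\varphi$; what is genuinely difficult, and what the paper spends \S\ref{ss:bijection} on, is showing that $\varphi$ lands in the correct codomain $\ShSYT'(\vr_n - \tau^{\ell,m})$ and is a bijection onto it. This is done by explicitly constructing the inverse $\psi$ via Worley--Sagan uninsertion (\Cref{p:inverse}), and then building a second injection in the forward direction using RSK, mixed shifted insertion and \Cref{t:rectification}. You are assuming precisely this bijection as input from \Cref{t:DeWitt-SYT}, but in the paper's logic the bijective proof of \Cref{t:DeWitt-SYT} \emph{is} the proof of \Cref{t:dewitt}; there is no independent earlier construction of $\varphi$ to lean on. Your reflection argument for $P_{\vr_n/\tau^{\ell,m}}=P_{\vr_n\spds\tau^{\ell,m}}$ is fine and standard, but note that the paper's $\varphi$ naturally produces the straight shape $\vr_n\spds\tau^{\ell,m}$, not the skew shape $\vr_n/\tau^{\ell,m}$, so the reflection is used in the opposite direction from what you sketch.
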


\smallskip

Recall that Schur functions are weighted generating functions of semistandard Young tableaux (see e.g.~\cite{Mac,EC}).
Similarly, Schur $P$--functions are weighted generating functions of semistandard Young tableaux of shifted shape with marked entries~\cite{Ste1}.
Semistandard tableaux can be associated to standard tableaux via Gessel's \emph{fundamental quasisymmetric functions} using bijective methods.

To prove~\Cref{t:dewitt}, we construct a bijection between standard tableaux and shifted standard tableaux with marked entries that reverses descent sets.
We do so using \emph{Worley--Sagan insertion}~\cite{Sag,Wor}, and the combinatorics of reduced words for \emph{fully commutative permutations}~\cite{BJS,Ste2}.\footnote{These are also known as 321-\emph{avoiding permutations}~\cite{BJS}.
We will not use this characterization.}
Our construction is easily seen to be surjective; we prove injectivity via a related map constructed using RSK and mixed shifted insertion.

In~$\S$\ref{ss:finrem-Schubert}, we recall an elegant geometric interpretation of~\Cref{t:dewitt}.
This geometric interpretation suggests that \Cref{t:skew,t:skew-AS,t:dewitt} should extend to
\emph{stable Grothendieck polynomials} and $K$-theoretic analogues of Schur $P$-functions.
We prove these extensions in \Cref{s:K} as \Cref{t:Groth-skew,t:K-dewitt} and \Cref{c:AMAN}, the first appearing in~\cite{LM} and the last being a main result in~\cite{AMAN}.
Our proofs of \Cref{t:Groth-skew} and \Cref{c:AMAN} are combinatorial as opposed to the previous algebraic proofs, but our proof of \Cref{t:K-dewitt} uses algebraic identities.
As a consequence, in \Cref{t:set-dewitt} we extend our bijective proof of \Cref{t:DeWitt-SYT} to a bijection between
certain \emph{set-valued tableaux}.

\medskip

\subsection*{Paper structure} In the lengthy and detailed~\Cref{s:3bij}, we recall
much of the background on the usual and shifted Young tableaux, reduced words and insertion algorithms.
Although our notation are self-contained, our arguments are technical and rely on many pieces of Young tableau technology.
This appears to be unavoidable for a self-contained proof that our map is bijective, and we disperse references to the literature
throughout the paper.
A reader willing to assume \Cref{t:dewitt} can find a concise description of our bijection at the beginning of \Cref{s:main}, with proof in \Cref{p:inverse} (see also the equivalent maps discussed after \Cref{l:first-part} and in~$\S$\ref{ss:finrem-crystal}).

We prove our main results in~\Cref{s:main}.   Our arguments are both dense and concise, so
examples are added for clarity.  In a short \Cref{s:K},
we give a generalization of both DeWitt's and Stembridge's theorems to \emph{Grothendieck polynomials}
and extend our bijections to work in this case.  We conclude with final remarks in~\Cref{s:finrem},
including some large simulations.

\bigskip

\section{Tableaux, reduced words and insertion algorithms}\label{s:3bij}
\label{s:defns}
\smallskip

\subsection{Basic notation}\label{ss:3bij-basic}
We write $\nn=\{0,1,\ldots\}$, $[n]=\{1,\ldots,n\}$ and $2^S$ for the power set of $S$. We also fix the
linear ordering \. $1'<1<2'<2<\.\cdots\.<n'<n$ \. of \defn{marked integers}.

\smallskip

\subsection{Young diagrams and Young tableaux}\label{ss:3bij-def}
Recall an integer partition is a sequence \ts
$\lambda = (\lambda_1, \dots, \lambda_k)$ of non-negative integers \ts $\la_1\ge \ldots \ge \la_k \ge 0$.
We identify $\lambda$ with its \defn{Young diagram}
\[
D_\lambda = \{(i,j) \in \N^2: 1 \leq j \leq \lambda_i\},
\]
which we view as a poset via pointwise comparison, so $(i,j) \leq (k,\ell)$ if $i \leq k$ and $j \leq \ell$.
Given $\mu \subseteq \lambda$, i.e., $\mu$, $\lambda$ such that $D_\mu \subseteq D_\lambda$, the \emph{skew partition} $\lambda / \mu$ corresponds to the diagram $D_\lambda \setminus D_\mu$.
A \defn{standard Young tableau} of shape $\lambda/\mu$ is a linear extension of $D_\lambda\setminus D_\mu$, the set  of which is denoted $\SYT(\lambda/\mu)$.

There is a parallel theory for \defn{strict partitions} $\lambda$, which satisfy $\lambda = (\lambda_1 > \dots > \lambda_k)$.
The \defn{shifted Young diagram} of a strict partition $\lambda$ is
\[
\Dsh_\lambda = \{(i,j) \in \mathbb{N}^2: i \leq j \leq \lambda_i + i - 1\},
\]
which we again view as a poset via pointwise comparison.
The \defn{diagonal} of $D_\lambda$ is the subset $\{(i,i):i \in \N\} \cap D_\lambda$.
For $\mu,\lambda$ strict partitions with $\mu \subseteq \lambda$, a \defn{shifted standard Young tableau} of shifted shape $\lambda/\mu$ is a linear extension of $\Dsh_{\lambda} \setminus \Dsh_\mu$.
Let $\ShSYT(\lambda/\mu)$ denote the set of shifted standard Young tableaux of shape $\lambda/\mu$.
A \defn{shifted standard Young tableau with marked entries} of shape $\lambda/\mu$ is a pair $(T,S)$ where $T \in \ShSYT(\lambda/\mu)$ and $S \subseteq D_{\lambda} \setminus D_\mu$ such that $S$ contains no diagonal entries.
Let $\ShSYT'(\lambda/\mu)$ be the set of shifted standard Young tableaux with marked entries of shape $\lambda/\mu$.
The entries in $S$ are \defn{marked} with symbol~$'$, see~\Cref{fig:SYT}.
For $T$ a tableau, let $T_{ij} = T((i,j))$ denote its $(i,j)$th entry.

Note for any skew partition $\lambda/\mu$ that $D_{\lambda/\mu}$ and $\Dsh_{\lambda+\delta_n/\mu+\delta_n}$ are isomorphic as posets, so the theory of (unshifted) Young tableaux can be viewed as a special case of theory of shifted Young tableaux.
Lastly, we recall a \defn{semistandard Young tableau} of shape $\lambda/\mu$ is tableau with positive integer entries whose rows are weakly increasing and columns are strictly increasing.
Let $\SSYT(\lambda/\mu)$ denote set of semistandard Young tableaux of shape $\lambda/\mu$.
For shifted shapes, instead use $\ShSSYT(\lambda/\mu)$.
The set $\ShSSYT'(\lambda/\mu)$ of \defn{marked shifted semistandard Young tableaux} allows the off-diagonal entries of a shifted semistandard Young tableaux to be marked by the symbol~$'$, with at most one $i$ in each column and at most one $i'$ in each row. 

\begin{figure}[hb]
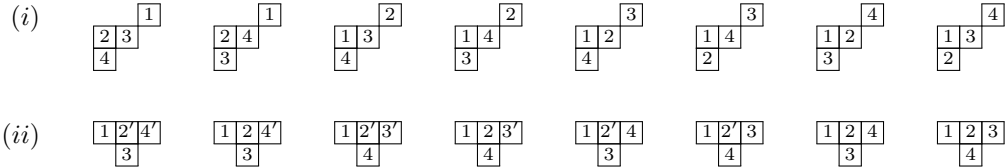

\begin{align*}\qquad (i) & \qquad
\ytableausetup{smalltableaux}
\begin{ytableau}
	\none & \none & 1\\
	2 & 3\\
	4
\end{ytableau} \qquad
\begin{ytableau}
	\none & \none & 1\\
	2 & 4\\
	3
\end{ytableau} \qquad
\begin{ytableau}
	\none & \none & 2\\
	1 & 3\\
	4
\end{ytableau} \qquad
\begin{ytableau}
	\none & \none & 2\\
	1 & 4\\
	3
\end{ytableau} \qquad
\begin{ytableau}
	\none & \none & 3\\
	1 & 2\\
	4
\end{ytableau} \qquad
\begin{ytableau}
	\none & \none & 3\\
	1 & 4\\
	2
\end{ytableau} \qquad
\begin{ytableau}
	\none & \none & 4\\
	1 & 2\\
	3
\end{ytableau} \qquad
\begin{ytableau}
	\none & \none & 4\\
	1 & 3\\
	2
\end{ytableau} \\
& \ & \ & \ & \ & \ & \ \ & \ & \\
\qquad (ii) & \qquad
\begin{ytableau}
	1 & 2' & 4'\\
	\none & 3
\end{ytableau} \qquad
\begin{ytableau}
	1 & 2 & 4'\\
	\none & 3
\end{ytableau} \qquad
\begin{ytableau}
	1 & 2' & 3'\\
	\none & 4
	\end{ytableau} \qquad
\begin{ytableau}
	1 & 2 & 3'\\
	\none & 4
\end{ytableau} \qquad
\begin{ytableau}
	1 & 2' & 4\\
	\none & 3
\end{ytableau} \qquad
\begin{ytableau}
	1 & 2' & 3\\
	\none & 4
\end{ytableau} \qquad
\begin{ytableau}
	1 & 2 & 4\\
	\none & 3
\end{ytableau} \qquad
\begin{ytableau}
	1 & 2 & 3\\
	\none & 4
\end{ytableau}
\end{align*}
\caption{The sets \. $(i)$ \ts $\SYT(\staircase_4/(2))$ \. and \. $(ii)$ \ts $\ShSYT'(\vr_4 - (2))$, with columns corresponding to our bijection.}
\label{fig:SYT}
\end{figure}

The reading word of an unmarked tableau $T$ is the word $r(T)$ obtained by reading its rows from left to right, bottom to top.
For example, the reading word of the third tableau in~\Cref{fig:SYT}$(i)$ is $4132$.

\subsection{Descents and symmetric functions}
\label{ss:des}
For $T$ a tableau of shape $\lambda$, let \. $x^T = \prod_{(i,j) \in D_\lambda} x_{T_{ij}}$,
where \ts $T_{ij}$ \ts is the $(i,j)$-th entry in~$T$, and \ts $x_{i'} = x_i$ \ts for marked entries.
The \defn{Schur functions} and \defn{Schur $P$-functions} are defined as
\begin{equation}
\label{eq:schur}
s_\lambda = \sum_{T \in \SSYT(\lambda)} x^T \quad \mathrm{and} \quad P_\mu = \sum_{T \in \ShSSYT'(\mu)} x^T \quad (\mu\ \mathrm{strict}).
\end{equation}
As we will see, there is a natural way to partition semistandard tableaux into sets indexed by standard tableaux.
This will give a formula for $s_\lambda$ and $P_\mu$.

For $\bfa = (a_1,\ldots, a_p)$ a word in $\mathbb{Z}$, the \defn{descent set} of $\bfa$ is $\Des(\bfa) = \{i \in [p-1]: a_i > a_{i+1}\}$.
Similarly, for $T \in \SYT(\lambda)$ with $\lambda$ a partition of $n$, the \defn{descent set} of $T$ is defined as
\[
\Des(T) \, := \, \bigl\{i \in [n-1]~:~i \ \, \text{is strictly above} \ \, i+1\bigr\}.
\]
Here by \defn{strictly above} we mean that the row number with entry~$i$ is strictly smaller than
that with~$(i+1)$.
For $T \in \ShSYT'(\mu)$ with $\mu$ a strict partition of size $n$, the \emph{descent set} of $T$ is defined as
\[
\Des(T) \, := \, \bigl\{i \in [n-1]: i,(i+1)' \in T\., \, \text{\text{\underline{or}}} \ \, i\ \text{\rm is strictly above}\ i+1\., \ \,
\text{\text{\underline{or}}} \ \, i' \ \text{\rm is strictly to the left of} \ (i+1)'\bigr\}.
\]
These sets are closely related to each other, as we will see in \Cref{t:sw}.

For $T \in \ShSYT'(\lambda)$, let $T^\circ$ be the tableau obtained by marking all unmarked off-diagonal entries and unmarking all marked entries.
For example, if $T$ if the first tableau in~\Cref{fig:SYT}~$(ii)$, then $T^\circ$ is the seventh.
Given $S \subseteq [n-1]$, its \defn{complement} is $S^c = [n-1] \setminus S$.
The next result follows from the definition of $\Des(T)$.

\begin{lemma}
\label{l:complement}
	For $\lambda$ a shifted shape with $|\lambda| = n$ and $T \in \ShSYT'(\lambda)$, we have:
	\[
	\Des(T^\circ) \. = \. \Des(T)^c.
	\]
\end{lemma}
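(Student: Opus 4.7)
The plan is to fix $i \in [n-1]$ and verify directly that $i \in \Des(T)$ if and only if $i \notin \Des(T^\circ)$; since $\Des(T)^c = [n-1] \setminus \Des(T)$, this yields the lemma. I classify the cells of $i$ and $i+1$ in $T$ into three types: D (diagonal, hence forced unmarked), U (off-diagonal, unmarked), or M (off-diagonal, marked). The operation $T \mapsto T^\circ$ acts on types by $\mathrm{D} \mapsto \mathrm{D}$, $\mathrm{U} \mapsto \mathrm{M}$, and $\mathrm{M} \mapsto \mathrm{U}$, leaving nine type-pairs for $(i, i+1)$ to handle.

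Three preliminary facts about shifted standard tableaux drive the case analysis. \textbf{(i)}~Two consecutive values $i, i+1$ cannot both lie on the diagonal: if $T(k,k) = i$ and $T(\ell,\ell) = i+1$ with $\ell > k$, then strictness of $\lambda$ and $\lambda_\ell \ge 1$ give $\lambda_k \ge 2$, so $(k, k+1) \in \Dsh_\lambda$; then $(k,k) < (k,k+1) \le (\ell,\ell)$ forces $T(k,k+1)$ strictly between $i$ and $i+1$. \textbf{(ii)}~If $i$ sits on the diagonal at $(k,k)$ and $i+1$ is off-diagonal, the row of $i+1$ is at most $k$; symmetrically, if $i+1$ sits on the diagonal, then $i$ lies strictly above $i+1$. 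Both follow from the same linear-extension argument as (i). \textbf{(iii)}~If both $i, i+1$ are off-diagonal, then exactly one of $A = $ ``$i$ is strictly above $i+1$'' and $L = $ ``$i$ is strictly to the left of $i+1$'' holds: if both held, then (row of $i$, column of $i+1$) lies in $\Dsh_\lambda$ by strictness of $\lambda$ (which gives $\lambda_r - \lambda_{r'} \ge r' - r$), and this cell would carry a forbidden intermediate value; if neither held, the position of $i$ would dominate that of $i+1$ componentwise, forcing $i \ge i+1$.

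With (i)-(iii) in hand, the eight surviving type-pairs are matched against the three clauses of the descent definition. The pairs UM and MU are immediate: one of $T, T^\circ$ contains the pattern $(i, (i+1)')$ and is a descent by the first clause of the definition, while the other contains $(i', i+1)$ and satisfies none of the three clauses, so complementarity is automatic. For the mixed-diagonal cases DU, DM, UD, MD, observation (ii) pins down whether $A$ holds outright, so the descent status on each side is deterministic, and one checks by inspection that exactly one of $T, T^\circ$ carries a descent at $i$. Finally, for UU and MM, each side is governed by a position comparison ($A$ or $L$ depending on marking), and observation (iii) asserts that these two comparisons are complementary.

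The main obstacle is bookkeeping rather than any conceptual depth: one must track the subtle asymmetry introduced by the diagonal cells, which are invariant under $\circ$ and therefore break the naive symmetry between marking and unmarking. Once facts (i)-(iii) are stated cleanly, the remaining case-by-case verification is a short truth table.
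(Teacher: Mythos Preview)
Your proof is correct, and it is precisely the verification the paper has in mind: the paper states only that the lemma ``follows from the definition of $\Des(T)$'' and omits any argument, whereas you have spelled out the case analysis in full. The three preliminary facts (i)--(iii) and the eight-case truth table are all valid, so there is nothing to add.
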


Let $S \subseteq [n-1]$, and define $I_S = \{(i_1 \leq \dots \leq i_n): k \in S \Rightarrow i_k < i_{k+1}\}$.
For $\bfi = (i_1,\dots,i_n)$ and $T$ a shifted marked standard tableau with $n$ entries, let $T(\bfi)$ be the tableau obtained from $T$ by replacing entries labeled $k$ with $i_k$ and entries labeled $k'$ with $i_k'$ for each $k \in [n]$.

\smallskip

\begin{theorem}
\label{t:fundamental}
Let $\lambda$ be a partition of $n$ and $\mu$ be a strict partition of $n$.
	Then \ts $(T,\bfi) \mapsto T(\bfi)$ is a bijection from
\[
(a)\  \bigcup_{T \in \SYT(\lambda)} \{T\} \times I_{\Des(T)} \to \SSYT(\lambda) \quad \quad (b)\ \bigcup_{T \in \ShSYT'(\mu)} \{T\} \times I_{\Des(T)} \to \ShSSYT'(\mu).
\]
\end{theorem}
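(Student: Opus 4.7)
The plan is to construct the inverse map explicitly via standardization and verify that the composition is the identity in both directions. The forward map $T(\bfi)$ is already defined, so the burden is to check that its image lies in the correct semistandard set and that a well-defined inverse exists.

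\textbf{Part (a).} Given $(T,\bfi)$ with $T \in \SYT(\lambda)$ and $\bfi = (i_1 \leq \cdots \leq i_n) \in I_{\Des(T)}$, I first check $U := T(\bfi) \in \SSYT(\lambda)$. Row weak monotonicity is immediate, since rows of $T$ are strictly increasing and $\bfi$ is weakly increasing. For column strict monotonicity, suppose $k < \ell$ lie in the same column of $T$ with $\ell$ directly below $k$. Tracing the sequence $k, k{+}1, \ldots, \ell$ in $T$, the cell of $k$ lies in some row $r$ while the cell of $\ell$ lies in row $r{+}1$, so some consecutive pair $(m,m{+}1)$ must transition strictly downward, giving $m \in \Des(T)$ with $k \leq m < \ell$. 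Then $i_k \leq i_m < i_{m+1} \leq i_\ell$, as required. For the inverse, given $U \in \SSYT(\lambda)$, define the standardization $T$ by labeling cells $1, \ldots, n$ in order of increasing $U$-value, breaking ties among cells sharing a common value $v$ by ordering them left-to-right by column (the column-strictness of $U$ forces such cells into distinct columns). This produces $T \in \SYT(\lambda)$, and the word $\bfi$ recording the $U$-values in label-order is weakly increasing by construction. To verify $\bfi \in I_{\Des(T)}$, suppose $i_k = i_{k+1}$; then labels $k$ and $k{+}1$ correspond to cells $(r_1, c_1)$ and $(r_2, c_2)$ carrying the common value $v$ with $c_1 < c_2$. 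If $r_1 < r_2$, the entry at $(r_1, c_2)$ would be both $\geq v$ by row weak increase from column $c_1$ and $< v$ by column strictness against $(r_2, c_2)$, a contradiction; so $r_1 \geq r_2$ and hence $k \notin \Des(T)$. The two maps are inverse by inspection.

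\textbf{Part (b).} The argument adapts to the shifted marked setting with two tie-breaking conventions in the standardization: cells sharing an unprimed value $v$ lie in distinct columns (by the $\ShSSYT'$ constraint) and are ordered left-to-right, while cells sharing a primed value $v'$ lie in distinct rows and are ordered top-to-bottom. Marks are carried through unchanged, and diagonal cells remain unmarked since diagonal entries of $U$ are unprimed. The main obstacle is the three-clause definition of $\Des(T)$ in $\ShSYT'$: the forward verification requires producing $i_k < i_{k+1}$ when $k \in \Des(T)$ under each clause (the pair $(k, (k{+}1)')$, or $k$ strictly above $k{+}1$, or $k'$ strictly left of $(k{+}1)'$), and the inverse verification requires showing that $i_k = i_{k+1}$ rules out each clause. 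Each case reduces to a planar incidence argument analogous to the one in part (a), tracking primed/unprimed status and the asymmetric row-vs-column tie-breaking; this case analysis, while routine in each individual instance, is the only genuine technical hurdle.
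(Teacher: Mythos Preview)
Your standardization argument is correct and is the standard elementary proof of this result. The paper itself does not supply a proof: immediately after the statement it remarks that part~(a) is essentially equivalent to Gessel's expansion of Schur functions into fundamental quasisymmetric functions and that part~(b) is Stembridge's analogous formula for Schur $P$--functions, citing~\cite{Ste1}. So you have provided substantially more detail than the paper does.

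Two minor remarks. In part~(a), your ``by inspection'' for the round-trip $(T,\bfi) \mapsto T(\bfi) \mapsto (\text{standardize},\bfi)$ hides one small check: among the cells carrying a common value $v$ in $T(\bfi)$, you must verify that the original $T$-labels already appear in strictly increasing column order, so that the left-to-right tie-break recovers $T$. This follows because consecutive equal-value labels $k,k{+}1$ have $k \notin \Des(T)$, hence $\mathrm{row}(k) \geq \mathrm{row}(k{+}1)$; combined with $T$ being standard this forces $\mathrm{col}(k) < \mathrm{col}(k{+}1)$. In part~(b), the forward check that $T(\bfi) \in \ShSSYT'(\mu)$ already requires the primed/unprimed case split (e.g.\ ruling out an unprimed $v$ followed by a primed $v'$ in the same row), not just the descent verification; you have correctly flagged this as the technical hurdle, but be aware it enters earlier than your outline suggests.
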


\smallskip

Part~(a) of~\Cref{t:fundamental} is essentially equivalent to Gessel's formula
for Schur functions in terms of fundamental quasisymmetric functions, while part~(b)
is equivalent to Stembridge's analogous formula for Schur $P$--functions~\cite{Ste1}, both of which we now state:

\smallskip

\begin{corollary}\label{c:fundamental}
Let $\lambda$ be a partition and $\mu$ be a strict partition.
	\[
	s_{\lambda} = \sum_{T \in \SYT(\lambda)} \sum_{\bfi \in I_{\Des(T)}} x^{T(\bfi)} \quad \mathrm{and} \quad P_{\mu} = \sum_{T \in \ShSYT'(\mu)} \sum_{\bfi \in I_{\Des(T)}} x^{T(\bfi)}.
	\]
\end{corollary}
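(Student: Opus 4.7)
The plan is to substitute the bijection of \Cref{t:fundamental} directly into the generating function definitions \eqref{eq:schur}, reducing the corollary to a bookkeeping exercise about weights.

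First I would address the Schur function identity. By \eqref{eq:schur}, we have $s_\lambda = \sum_{U \in \SSYT(\lambda)} x^U$. Part (a) of \Cref{t:fundamental} says that the map $(T,\bfi) \mapsto T(\bfi)$ is a bijection from $\bigcup_{T \in \SYT(\lambda)} \{T\} \times I_{\Des(T)}$ onto $\SSYT(\lambda)$. Every $U \in \SSYT(\lambda)$ therefore has a unique preimage $(T,\bfi)$ with $T \in \SYT(\lambda)$ and $\bfi \in I_{\Des(T)}$, so reindexing gives
\[
s_\lambda \. = \. \sum_{U \in \SSYT(\lambda)} x^U \. = \. \sum_{T \in \SYT(\lambda)} \, \sum_{\bfi \in I_{\Des(T)}} x^{T(\bfi)}.
\]
The one thing to verify is that the monomial weight is preserved under the bijection. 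By definition of $T(\bfi)$, the entry $k$ in $T$ is replaced by $i_k$, so $x^{T(\bfi)} = \prod_{k=1}^n x_{i_k}$, which is exactly $x^U$ for $U = T(\bfi)$. Hence the two sides of the displayed equation agree term by term.

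For the Schur $P$-function identity the argument is identical, now using part (b) of \Cref{t:fundamental} together with the convention $x_{i'} = x_i$ introduced in~$\S$\ref{ss:des}. Under this convention, replacing a marked entry $k'$ in $T$ by the marked entry $i_k'$ preserves the $x$-weight, so again $x^{T(\bfi)} = x^U$ where $U = T(\bfi) \in \ShSSYT'(\mu)$. Starting from $P_\mu = \sum_{U \in \ShSSYT'(\mu)} x^U$ in \eqref{eq:schur} and reindexing via the bijection yields
\[
P_\mu \. = \. \sum_{T \in \ShSYT'(\mu)} \, \sum_{\bfi \in I_{\Des(T)}} x^{T(\bfi)}.
\]

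There is essentially no obstacle here, since all of the genuine combinatorial content lies in \Cref{t:fundamental}, which we are allowed to assume. The only subtlety worth flagging in the writeup is the handling of primed entries in the shifted case, to make it clear that the weight convention $x_{i'} = x_i$ makes the bijection weight-preserving in part (b) exactly as in part (a).
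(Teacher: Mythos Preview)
Your proposal is correct and matches the paper's approach exactly: the paper does not spell out a proof but simply presents \Cref{c:fundamental} as an immediate restatement of \Cref{t:fundamental}, and your argument makes explicit precisely the substitution of the bijection from \Cref{t:fundamental} into the definitions~\eqref{eq:schur}, together with the routine check that the monomial weight is preserved (including the handling of primed entries via $x_{i'}=x_i$).
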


For $S \subseteq [n-1]$, its \defn{reverse} is $S^r = \{n-k: k\in S\}$.
The following is a corollary of~\cite[A1.2.11]{EC}, and can be proved bijectively using Sch\"utzenberger's evacuation involution.

\smallskip

\begin{lemma}\label{l:rev-comp} Let $\lambda$ be a partition.
	\[s_{\lambda} = \sum_{T \in \SYT(\lambda)} \sum_{\bfi \in I_{\Des(T)^r}} x^{T(\bfi)}.\]
\end{lemma}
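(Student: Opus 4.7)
The plan is to deduce this directly from \Cref{c:fundamental} by reindexing the outer sum via Schützenberger's evacuation involution. First I would record the key observation that in the formula
\[
s_\lambda = \sum_{T \in \SYT(\lambda)} \sum_{\bfi \in I_{\Des(T)}} x^{T(\bfi)}
\]
from \Cref{c:fundamental}, the monomial $x^{T(\bfi)}$ depends only on $\bfi$, not on $T$. Indeed, each value $k \in [n]$ appears exactly once in $T$ and is replaced by $i_k$ to form $T(\bfi)$, so $x^{T(\bfi)} = x_{i_1} x_{i_2} \cdots x_{i_n}$. Writing this common quantity as $x^{\bfi}$, the right-hand side of \Cref{l:rev-comp} becomes
\[
\sum_{T \in \SYT(\lambda)} \sum_{\bfi \in I_{\Des(T)^r}} x^{\bfi},
\]
so it suffices to exhibit a bijection $\varepsilon : \SYT(\lambda) \to \SYT(\lambda)$ satisfying $\Des(\varepsilon(T)) = \Des(T)^r$.

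Next I would invoke Schützenberger's \emph{evacuation} map $\varepsilon$, defined by iterated jeu-de-taquin demotion (or equivalently by repeatedly applying promotion to the subtableaux on the largest entries). Evacuation is a well-known involution on $\SYT(\lambda)$ satisfying precisely this descent-reversal property: for all $i \in [n-1]$,
\[
i \in \Des(\varepsilon(T)) \ \Longleftrightarrow \ n-i \in \Des(T),
\]
so $\Des(\varepsilon(T)) = \Des(T)^r$. With this in hand, reindexing the sum along the bijection $T \mapsto \varepsilon(T)$ gives
\[
\sum_{T} \sum_{\bfi \in I_{\Des(T)^r}} x^{\bfi}
\ = \ \sum_{T} \sum_{\bfi \in I_{\Des(\varepsilon(T))}} x^{\bfi}
\ = \ \sum_{T} \sum_{\bfi \in I_{\Des(T)}} x^{\bfi}
\ = \ s_\lambda,
\]
which is the claim.

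The only nontrivial ingredient is the descent-reversal identity for $\varepsilon$; everything else is formal. One clean route to it uses the RSK correspondence: if $T = Q(w)$ for a permutation $w \in S_n$, then $\varepsilon(T) = Q(w_\circ w w_\circ)$, and the identity $\Des(w_\circ w w_\circ) = \{n - k : k \in \Des(w)\}$ is immediate from the definition of descents of permutations. Alternatively, one checks directly by induction on $n$ that each step of the evacuation procedure swaps the relative vertical position of $i, i+1$ with that of $n-i, n-i+1$. Either route is standard and short; this is the step I would expect to be the ``main'' obstacle, but only in the sense of locating the correct citation (e.g.\ \cite[A1.2.11]{EC}) rather than carrying out any new argument.
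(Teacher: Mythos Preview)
Your proposal is correct and matches the paper's approach exactly: the paper states that the lemma is a corollary of~\cite[A1.2.11]{EC} and can be proved bijectively using Sch\"utzenberger's evacuation involution, which is precisely the argument you have written out in detail. Your observation that $x^{T(\bfi)}$ depends only on $\bfi$ and your RSK justification of the descent-reversal property $\Des(\varepsilon(T)) = \Des(T)^r$ are both sound.
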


As a consequence, we see~\Cref{t:dewitt} follows from the existence of a bijection between $\SYT(\delta_n /\rho_{r,\ell})$ and $\ShSYT'(\vr_n/\tau_{r,\ell})$ that reverses descent sets.

 \subsection{Insertion algorithms and jeu de taquin}
\label{ss:ws}

To construct our bijection, we use RSK as well as three closely related bijections for shifted tableaux: Worley--Sagan insertion, mixed shifted insertion and jeu de taquin.  We assume the reader is familiar with ordinary RSK (see e.g.~\cite{Sag-book,EC}), which we denote \ts
$\bfa \xmapsto{\mathrm{RSK}} (P(\bfa),Q(\bfa))$.

\

\noindent\textbf{Weak and strict insertion:}
For $x$ a letter and $\bfr = (r_1, \dots, r_n)$ a word, let $i$ be the smallest index so that $r_i > x$ or $n+1$ if $r_n \leq x$.
We \defn{weak insert} $x$ into $\bfr$ by replacing $r_i$ with $x$, \defn{bumping} $r_i$ if $i \neq n+1$.
Similarly, if $j$ is the smallest index so that $r_j \geq x$ or $n+1$ if $r_n <x$, we \defn{strict insert} $x$ into $r$ by replacing $r_j$ with $x$, bumping $r_j$ if $j \neq n+1$.

\

\noindent\textbf{Worley--Sagan bumping:}
For $T$ a shifted tableau and $x := x^1$ a letter, insert $x^1$ into $T$ by:
\begin{enumerate}
\item Weak insert $x^i$ into the $i$-th row of $T$ (viewing this row as a word); if no entry is bumped, then the process terminates.
\item If $x^i$ bumps a letter $x^{i+1}$ that isn't the first entry of the $i$-th row of $T$, return to step (1) and continue to weak insert into rows.
\item Otherwise, if $x^i$ bumps the first entry of the $i$-th row of $T$, begin to strict insert $x^{j}$ into the $j$-th column of $T$ for $j = i+1, i+2,\ldots$ (again, viewing the column as a word) until no entry is bumped, then terminate.
\end{enumerate}

\

\noindent \textbf{Worley--Sagan insertion:} Given a word $\bfa = (a_1,\dots, a_p)$, initialize $\PSW$ and $\QSW$ as empty tableaux.
For $i = 1,2,\ldots,p$, insert $a_i$ into $\PSW$ using Worley--Sagan bumping, resulting in a new cell $c$.
In $\QSW$, we label $c$ with $i$ if the final insertion was a row insertion and $i'$ if the final insertion was a column insertion.

\medskip

The \defn{Knuth relations} are the transformations:\footnote{See e.g.~\cite[$\S$A1.1]{EC}, where these are called
\defn{Knuth equivalences} and \defn{Knuth transformations}.}
\[
acb \leftrightarrow cab \quad \mathrm{with}  \quad a \leq b < c \quad  \mathrm{and} \quad bac \leftrightarrow bca \quad \mathrm{with} \quad a < b \leq c.
\]
For a word $\bfa = (a_1, \dots, a_p)$, a \defn{Knuth move} is the application of a Knuth relation to a consecutive triple $a_i a_{i+1} a_{i+2}$, and a \defn{shifted Knuth move} is a Knuth move or the exchange of the first two entries $a_1$ and $a_2$.

We say $\bfa$ and $\bfb$ are \defn{Knuth equivalent}, denoted $\bfa\ts \equiv\ts \bfb$, if  they differ by a sequence of Knuth moves.
Similarly, $\bfa$ and $\bfb$ are \defn{shifted Knuth equivalent}, denoted $\bfa \ts\hatequiv\ts \bfb$, if they differ by a sequence of shifted Knuth moves.
Note that $\bfa \equiv \bfb$ implies $\bfa \hatequiv \bfb$, but the converse need not hold.
The equivalence classes under the relations $\equiv$ and $\hatequiv$ are called \defn{Knuth classes} and \defn{shifted Knuth classes}, respectively.

\smallskip

\begin{example}\label{ex:ws}{\rm
Applying Worley--Sagan insertion to the words $\bfa = (1,3,2,5,4,3)$ and $\bfb = (3,1,5,2,4,3)$, we see

\[
\PSW(\bfa) = \PSW(\bfb) = \raisebox{\height}{\begin{ytableau}
1 & 2 & 3\\
\none & 3 & 4\\
\none & \none & 5
\end{ytableau}}\ ,
\hspace{10pt}
\QSW(\bfa) \ = \ \
\raisebox{\height}{\begin{ytableau}
1 & 2 & 4\\
\none & 3 & 5\\
\none & \none & 6
\end{ytableau}}\ ,
\hspace{10pt}
\QSW(\bfb) \ = \ \ \raisebox{\height}{\begin{ytableau}
1 & 2' & 3\\
\none & 4 & 5\\
\none & \none & 6
\end{ytableau}}.
\]

\smallskip

\nin
Note that \ts $\bfa \equiv \bfb$ \ts (so that \ts $\bfa \hatequiv \bfb$)  via \ts
$\bfa = (\underline{1,3,2},5,4,3) \leftrightarrow (3,1,\underline{2,5,4},3) \leftrightarrow (3,1,5,2,4,3) = \bfb$.
}\end{example}

\smallskip

We summarize some key properties of Worley--Sagan insertion.

\smallskip

\begin{theorem}[{\cite{Sag,Wor}}]
\label{t:sw}\
\begin{enumerate}
\item Worley--Sagan insertion is a bijection from words to pairs $(\PSW,\QSW)$ of tableaux, where $\PSW \in \ShSSYT(\lambda)$ and $\QSW \in \ShSYT'(\lambda)$ for some shifted shape $\lambda$.
\item The words $\bfa$ and $\bfb$ are shifted Knuth equivalent if and only if $\PSW(\bfa) = \PSW(\bfb)$.
\item $\Des(\bfa) = \Des(\QSW(\bfa))$.
\end{enumerate}

\end{theorem}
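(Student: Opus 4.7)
The plan is to prove the three statements in turn, using the local structure of the insertion procedure and leaning on classical arguments from ordinary RSK as templates.

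\emph{For part (1),} I would argue by induction on the length of the input word that after each insertion, $\PSW$ remains a shifted semistandard tableau and $\QSW$ a shifted standard tableau with marked entries. Weak row insertion preserves weak increase in rows; the switch to strict column insertion is triggered exactly when a diagonal entry is bumped, and because that bumped entry is strictly less than everything appearing later in the relevant columns, strict column insertion keeps columns strictly increasing. The shape grows by a single outer corner at every step, so the shifted shape is maintained. To establish the bijection I would describe a reverse algorithm: locate the cell labeled $n$ or $n'$ in $\QSW$; if unmarked, reverse-bump upward through the rows using reverse weak bumping; if marked, reverse strict column insertion leftward until reaching the column immediately right of the diagonal in the appropriate row, then switch to reverse row bumping, eventually ejecting a letter from row~1. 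Induction on $n$ recovers the original word, and this map is inverse to Worley--Sagan insertion.

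\emph{For part (2),} the forward direction---shifted Knuth moves preserve $\PSW$---I would handle by case analysis. For the ordinary Knuth moves $acb \leftrightarrow cab$ and $bac \leftrightarrow bca$, I would insert the three letters in both orders into an arbitrary prior tableau and compare the bumping paths, paralleling the classical RSK verification and carefully adding the subcases where one of the three insertions triggers the row-to-column transition at the diagonal. For the additional shifted Knuth move swapping $a_1$ and $a_2$, the only affected cells lie in the first row or on the diagonal, and a direct check shows the two-cell tableau obtained after the first two insertions is unchanged. The converse---equal $\PSW$ implies shifted Knuth equivalence---I would obtain by identifying a canonical representative of each shifted Knuth class, namely the word produced by the reverse algorithm reading entries of $\QSW$ in any fixed order; both inputs with the same $\PSW$ reduce to this representative under shifted Knuth moves.

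\emph{For part (3),} I would check directly that $a_i > a_{i+1}$ forces $i \in \Des(\QSW(\bfa))$. If the insertions of $a_i$ and $a_{i+1}$ both terminate in row mode, the path of $a_{i+1}$ begins weakly to the left of that of $a_i$ and the new cell ends up strictly below the cell labeled $i$, so $i$ is a descent. If column insertion is triggered for one or both, the resulting positions of $i, i'$ and $(i+1), (i+1)'$ fall into one of the three clauses in the definition of $\Des$ for marked shifted tableaux. Conversely, if $a_i \leq a_{i+1}$, the $(i{+}1)$st insertion path lies weakly to the right of the $i$th, so the new cell is placed in a location that is never a descent.

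\emph{The main obstacle} will be the case analysis in part (2), especially the subcases where one of the three letters involved in an ordinary Knuth move crosses the diagonal and triggers the row-to-column switch. Tracking the interaction between row and column bumping modes through such a move, and verifying that the canonical representative construction for the converse direction is truly invariant of choices, is the most technical step; the remaining verifications follow standard RSK-style inductions once these delicate cases are settled.
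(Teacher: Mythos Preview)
The paper does not give a proof of this theorem at all: it is stated as a background result with citation to Sagan and Worley, and the text immediately preceding it says only ``We summarize some key properties of Worley--Sagan insertion.'' There is therefore no proof in the paper to compare your proposal against.

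Your outline is essentially the standard route taken in the original references, so in that sense it is the ``right'' approach. One point worth sharpening: in part~(2), your converse argument is vague. The phrase ``the word produced by the reverse algorithm reading entries of $\QSW$ in any fixed order'' does not quite make sense as written, since different reading orders of $\QSW$ give different words, and you have not argued that these are all shifted Knuth equivalent to each other. The cleaner statement (and what Sagan and Worley actually prove) is that every word $\bfa$ is shifted Knuth equivalent to the row reading word of $\PSW(\bfa)$; this is shown by verifying that a single Worley--Sagan bumping step can be realized as a sequence of shifted Knuth moves on the reading word. Once you have that, two words with the same $\PSW$ are both equivalent to the same reading word, hence to each other. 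Your sketch gestures at this idea but would benefit from stating it precisely.
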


\smallskip

For \ts $\bfa = (a_1,\dots,a_p)$, denote \ts $-\bfa := (-a_1,\dots,-a_p)$.

\smallskip

\begin{theorem}[{\cite[Cor.~8.9]{Hai1}}]
\label{t:neg}  In the notation above, we have: \ts
$\QSW(-\bfa) = \QSW(\bfa)^\circ$.
\end{theorem}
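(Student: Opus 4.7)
The plan is to induct on $p = |\bfa|$. A sanity check shows the identity as stated requires $\bfa$ to have pairwise distinct entries (for example, $\bfa=(1,1)$ gives $\QSW(\bfa)=\QSW(-\bfa)=[1,2]$ whereas $\QSW(\bfa)^\circ=[1,2']$); this is consistent with Haiman's original setting of permutations and is harmless downstream since one may always standardize.

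\emph{Base case $p\le 1$.} The unique recorded cell lies on the diagonal and carries no mark, so both sides equal $[1]$ and the identity is immediate.

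\emph{Inductive step.} Write $\bfa=\bfa'\cdot a_p$ with $\bfa'=(a_1,\dots,a_{p-1})$. The induction hypothesis gives $\QSW(-\bfa')=\QSW(\bfa')^\circ$, and in particular $\PSW(\bfa')$ and $\PSW(-\bfa')$ have a common shifted shape $\lambda$. It then suffices to establish the single-step claim

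\begin{quote}
$(\ast)$\; the Worley--Sagan bumpings of $a_p$ into $\PSW(\bfa')$ and of $-a_p$ into $\PSW(-\bfa')$ add the new cell at the same position $c$, and---when $c$ is off-diagonal---exactly one of the two insertions terminates with a column bump.
\end{quote}

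\noindent Granting $(\ast)$, the $p$-th cells of $\QSW(-\bfa)$ and $\QSW(\bfa)^\circ$ coincide with opposite primes (or both unmarked on the diagonal), completing the induction.

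To prove $(\ast)$, the central observation is that over a permutation alphabet the shifted Knuth relations $acb\equiv cab$, $bac\equiv bca$ (with $a<b<c$) and the initial-pair swap are all symmetric under $x\mapsto -x$. Combined with Theorem~\ref{t:sw}(2), this shows $\bfa\mapsto -\bfa$ descends to a well-defined bijection on shifted Knuth classes, so $\PSW(\bfa')$ already determines $\PSW(-\bfa')$. Running both bumping paths in parallel, the test ``is the currently bumped entry the leftmost entry of its row'' that triggers the switch from weak row bumping to strict column bumping is passed in one process precisely when it fails in the other. A case analysis on the bumping type at each step then shows that the two paths trace out the same new cell, with opposite row/column types off the diagonal.

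The main obstacle is this sign-duality case analysis: keeping track of the weak/strict switch as the bumping path snakes between rows and columns is technical, since the inequality controlling a column bump ($\ge$) differs from the one controlling a row bump ($>$). The route taken in~\cite{Hai1} avoids some of this pain by first establishing the analogue $\QMS(-\bfa)=\QMS(\bfa)^\circ$ for Haiman's mixed insertion, where the symmetry $x\mapsto -x$ is essentially built into the algorithm, and then transferring the identity to Worley--Sagan via the shape-preserving bijection between mixed-insertion and Worley--Sagan recording tableaux.
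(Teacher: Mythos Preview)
The paper does not supply a proof of this statement: it is quoted as \cite[Cor.~8.9]{Hai1} and used as a black box. So there is nothing in the paper to compare your argument against; the relevant comparison is with Haiman's proof, which---as your last paragraph correctly notes---proceeds through the mixed-insertion framework and the accompanying conversion/symmetry theory, not by a direct induction on Worley--Sagan bumping.

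Your direct inductive outline has a genuine gap at the justification of~$(\ast)$. The sentence ``the test `is the currently bumped entry the leftmost entry of its row' \ldots\ is passed in one process precisely when it fails in the other'' is false as stated. Take $\bfa=(3,1,2)$. After two steps, $\PSW(\bfa')=[1\ 3]$ and $\PSW(-\bfa')=[-3\ -1]$. Inserting $a_3=2$ bumps $3$ from $(1,2)$, which is \emph{not} the leftmost entry, so there is no switch and $3$ lands on the diagonal at $(2,2)$. Inserting $-a_3=-2$ bumps $-1$ from $(1,2)$, again not leftmost, again no switch, and $-1$ also lands at $(2,2)$. Thus the switch test fails in \emph{both} processes. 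The conclusion of~$(\ast)$ happens to survive here only because the new cell is diagonal, but your stated mechanism for~$(\ast)$ does not.

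More fundamentally, knowing that $\PSW(\bfa')$ and $\PSW(-\bfa')$ have the same shape (which is all the shifted-Knuth symmetry and the inductive hypothesis on $Q$ give you) is not enough to ``run both bumping paths in parallel'': the two $P$-tableaux are related by a Sch\"utzenberger-type evacuation, not by an entrywise negation, so their rows and columns do not correspond cell-by-cell. Without an explicit description of $\PSW(-\bfa')$ in terms of $\PSW(\bfa')$, the promised ``case analysis on the bumping type at each step'' cannot get started. This is exactly the difficulty Haiman circumvents by passing to mixed insertion, where the relevant duality is built in (via Theorem~\ref{t:mixed}(2) in this paper's numbering, $\QSW(w)=\PMS(w^{-1})$, so the $\circ$-symmetry of $\QSW$ becomes a statement about $\PMS$ under complementation, which is tractable).
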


\smallskip

Next, we introduce \defn{mixed shifted insertion}.

\

\noindent \textbf{Mixed shifted bumping:} For $T$ a shifted marked tableau and $x := x^1$ an unmarked letter, set the index of the initial cell to be $(y_1,z_1) = (1,1)$.

\begin{enumerate}
\item If $x^i$ is unmarked, weak insert $x^i$ into the $(y_i+1)$st row of $T$; if $x^i$ is marked, weak insert $x^i$ into the $(z_i+1)$-st column of $T$; if no entry is bumped, then the process terminates.
\item If $x^i$ bumps a letter, define $(y_{i+1},z_{i+1})$ to be the cell of the bumped entry and $x^{i+1}$ to be the bumped entry if $y_{i+1} < z_{i+1}$ and $(x^{i+1})'$ if $y_{i+1} = z_{i+1}$.  Return to step (1).
\end{enumerate}

In other words, when a letter is bumped from the diagonal (which is unmarked, by definition), the letter becomes marked and gets weak inserted into the next column.

\

\noindent \textbf{Mixed shifted insertion:} Given a word $\bfa = (a_1,\dots, a_p)$, initialize $\PMS $ and $\QMS$ as empty tableaux.
For $i \in [p]$, insert $a_i$ into $\PMS$ using mixed shifted bumping.
This process terminates with a new cell $c$, which is added to $\QMS$ with label $i$.

\smallskip

\begin{theorem}[{\cite{Hai1}}] \ 
\label{t:mixed}
	\begin{enumerate}
		\item Mixed shifted insertion is a bijection from permutations to pairs $(\PMS,\QMS)$ of tableaux, where $\PMS \in \ShSYT'(\lambda)$ and $\QMS \in \SYT(\lambda)$ for some shifted shape $\lambda$.
		\item For $w$ a permutation, $\PMS(w) = \QSW(w^{-1})$ and $\QMS(w) = \PSW(w^{-1})$.
	\end{enumerate}
\end{theorem}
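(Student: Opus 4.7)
The plan breaks into the two claims. For (1), the bijectivity of mixed shifted insertion, the natural approach is to exhibit an explicit inverse via reverse bumping. Given a pair $(\PMS,\QMS)$ of shifted shape $\la$ with $|\la|=p$, I would locate the cell $c$ of $\QMS$ labeled $p$; this is the terminal cell of the last insertion. Its row index determines whether the terminal step was a row or a column insertion (and, together with the local pattern near the diagonal, whether the insertion had earlier crossed from row to column mode), so one can undo the final bumping step and propagate backwards along the insertion path. The essential subtlety is that upon crossing the diagonal from column insertion back to row insertion, a marked letter on the diagonal must revert to an unmarked letter — this is precisely the inverse of the rule "when a letter is bumped from the diagonal, the letter becomes marked and gets weak-inserted into the next column.'' Iterating extracts $a_p, a_{p-1}, \ldots, a_1$; each reverse step is deterministic so injectivity is automatic, and a dimension/cardinality check or direct verification that the reverse procedure always produces a valid mixed shifted bumping sequence gives surjectivity.

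For (2), the symmetry $\PMS(w)=\QSW(w^{-1})$ and $\QMS(w)=\PSW(w^{-1})$, I would follow the growth-diagram philosophy of Fomin, as adapted to the shifted setting. Place the one-line notation of $w$ as an $n\times n$ permutation matrix and attach a shifted shape $\la^{(i,j)}$ to the lattice point $(i,j)$, with local rules on each unit square determining $\la^{(i,j)}$ from the three neighbors $\la^{(i-1,j)},\la^{(i-1,j-1)},\la^{(i,j-1)}$ and from whether the square contains a matrix entry. Two things then need to be checked: reading along the top edge the shape sequence together with the marked/unmarked labels reconstructs $(\PMS(w),\QMS(w))$ via mixed shifted insertion on $w$; reading along the right edge it reconstructs $(\PSW(w^{-1}),\QSW(w^{-1}))$ via Worley--Sagan insertion on $w^{-1}$. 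Because transposing the permutation matrix corresponds to inverting $w$, and the growth diagram is symmetric under this transposition up to a canonical swap between the row and column insertion modes, the identifications follow.

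The main obstacle lies in isolating the correct local rules. In unshifted RSK the rules are three lines; here the hybrid row/column bumping and the marked/unmarked distinction force a more intricate rule set with separate cases for squares near the diagonal. The bookkeeping that identifies, in the transposed diagram, a column-insertion step of Worley--Sagan with a row-insertion step of mixed shifted insertion (and vice versa) is where the interchange of $\PSW \leftrightarrow \QMS$ really happens, and this is the step that demands the most care.

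If growth diagrams are to be avoided, an alternative is a direct induction on $p$: remove the entry $w_p$, apply both maps, and verify that a single mixed shifted insertion step on $w$ mirrors a single inverse Worley--Sagan step on $w^{-1}$. This reduces the theorem to a case-by-case comparison of how the two bumping paths behave when they stay strictly above the diagonal, when they touch the diagonal, and when they proceed as column insertions; the case analysis is nontrivial but finite, and it recovers \Cref{t:sw} in the symmetric form needed for (2).
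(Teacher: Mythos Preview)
The paper does not prove this theorem at all: it is stated with the citation \cite{Hai1} and used as a black box, so there is no ``paper's own proof'' against which to compare your proposal. Your outline is a reasonable sketch of how Haiman's original argument goes, and the growth-diagram reformulation you describe is a legitimate alternative route to part~(2).

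That said, one technical point in your treatment of~(1) is off. You write that the row index of the cell labeled $p$ in $\QMS$ determines whether the terminal step was a row or a column insertion. But in mixed shifted insertion the recording tableau $\QMS$ is an ordinary (unmarked) standard tableau; all marking information lives in $\PMS$. The correct way to recover the mode of the final bump is to look at the entry of $\PMS$ in the terminal cell: if it is marked, the last step was a column insertion, and if unmarked (or on the diagonal), it was a row insertion. This is what makes the reverse bumping well-defined, and it is also what explains, in your part~(2), why the roles of $P$ and $Q$ swap: the marking data that Worley--Sagan stores in its recording tableau $\QSW$ is precisely the marking data that mixed shifted insertion stores in its insertion tableau $\PMS$.

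For~(2), your growth-diagram plan is correct in spirit but slightly overstated. The diagram is not literally symmetric under transposition with a ``canonical swap of modes''; rather, one must check that the local rule, read in one direction, reproduces a step of mixed shifted bumping, while read in the transposed direction it reproduces a step of Worley--Sagan bumping. These are genuinely different algorithms (one marks in $P$, the other in $Q$), and the verification that a single local rule encodes both is the real content of Haiman's proof. Your alternative inductive comparison of bumping paths is closer to what Haiman actually does in \cite{Hai1}.
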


\smallskip


We now define jeu de taquin.
An \defn{inner corner} of the skew partition $\lambda/\mu$ is a maximal element of $D_\mu$ and an \defn{outer corner} is a minimal entry of $\mathbb{N}^2 \setminus D_{\lambda}$.
When $\lambda$ and $\mu$ are strict partitions, the definition of inner and outer corners extends immediately using $D^{SH}_\mu$ and $D^{SH}_{\lambda/\mu}$.

For $T \in \ShSYT(\lambda/\mu)$ (not marked!) and $c_1 = (i_1,j_1)$ an inner corner of $\lambda/\mu$, we perform an \defn{inner jeu de taquin slide} using the following algorithm.

\smallskip

\nin
\qquad
{\sf While} \ts $c_k = (i_k,j_k)$ is not an outer corner:
\begin{itemize}
\item 	fill \ts $c_k$ \ts with \ts $\min T_{i+1\ j}, T_{i\ j+1}$,
\item   set \ts $c_{k+1} \gets (i+1,j)$ \ts if \ts $T_{i+1,j} \leq T_{i,j+1}$, and \ts $c_{k+1} \gets (i,j+1)$ otherwise.
\end{itemize}

\nin
\qquad
{\sf When} \ts $c_k$ is an outer corner, remove it from the resulting tableau, which we denote $J_{(i_1,j_1)}(T)$.

\smallskip

\nin
This definition applies equally well, mutatis mutandis, to shifted shapes.

\smallskip

For a tableau \ts $T \in \SSYT(\lambda/\mu)$,  choose \ts $S \in \SYT(\mu)$.  Let $c^k$
be the cell in \ts $D_\mu$, so that $S(c^k) = k$ for all \ts $1\le k \le |\mu|$.
The \defn{rectification} of $T$ is
\[
\rect(T) = J_{c^1} \circ \dots \circ J_{c^{|\mu|}}(T).
\]
For $T \in \ShSSYT(\lambda/\mu)$, define $\rect_{Sh}$ analogously.
It is known that $\rect(T)$ and $\rect_{Sh}(T)$ do not depend on the choice of~$S$, see e.g.~\cite{Hai2}.

\begin{example}{\rm
We compute a jeu de taquin slide with inner corner $c = (1,2)$:

\begin{align*}
T \ = \ \
	\raisebox{\height}{\begin{ytableau}
	\none & \none[\bullet] & 2 & 5 & 9\\
	\none & 2 & 4 & 7 & 10\\
	1 & 6 & 8
\end{ytableau}} \quad
\longrightarrow \ \
\raisebox{\height}{\begin{ytableau}
	\none & 2  & 2 & 5 & 9\\
	\none & \bullet & 4 & 7 & 10\\
	1 & 6 & 8
\end{ytableau}}
\quad
\longrightarrow \ \
\raisebox{\height}{\begin{ytableau}
	\none & 2  & 2 & 5 & 9\\
	\none & 4 & \bullet & 7 & 10\\
	1 & 6 & 8
\end{ytableau}}
\quad
\longrightarrow \ \
\raisebox{\height}{\begin{ytableau}
	\none & 2  & 2 & 5 & 9\\
	\none & 4 & 7 & \bullet & 10\\
	1 & 4 & 8
\end{ytableau}}
\quad
\longrightarrow \ \
\raisebox{\height}{\begin{ytableau}
	\none & 2  & 2 & 5 & 9\\
	\none & 4 & 7 & 10 & \none[\bullet]\\
	1 & 6 & 8
\end{ytableau}}
\quad = \ J_{(1,2)}(T)
\end{align*}

\smallskip

\nin
Here, the equalities ignore the sliding square which we denote by ``$\bullet$''.
In this case, the rectification of $T$ is given by
\[
\rect(T) \ = \ \
 \raisebox{\height}{\begin{ytableau}
	1 & 2  & 2 & 5 & 9\\
	4 & 7 & 10\\
	6 & 8
\end{ytableau}}
\]
}\end{example}

\smallskip

\begin{theorem}[{\cite[Thm~2.25]{Ser}}]
\label{t:rectification}
For every permutation $w$, we have \. $\QMS(w) = \rect_{SH}\bigl(Q(w)\bigr)$.
\end{theorem}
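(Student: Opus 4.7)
The plan is to reformulate the claim using the symmetries encoded in \Cref{t:mixed}, so that the identity becomes one purely between Worley--Sagan insertion and ordinary RSK, and then to invoke a shifted analog of the classical principle that rectification equals insertion of the reading word. By \Cref{t:mixed}(2), $\QMS(w) = \PSW(w^{-1})$, and by the standard $w \leftrightarrow w^{-1}$ symmetry of ordinary RSK, $Q(w) = P(w^{-1})$. Setting \ts $\bfa := w^{-1}$, the statement of \Cref{t:rectification} reduces to
\[
\PSW(\bfa) \. = \. \rect_{SH}\bigl(P(\bfa)\bigr) \quad \text{for every permutation } \bfa,
\]
where $P(\bfa)$ is viewed as a shifted skew tableau via the poset embedding $D_\lambda \hookrightarrow \Dsh_{\lambda+\delta_n}/\Dsh_{\delta_n}$ recorded in $\S$\ref{ss:3bij-def}.

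The technical core is then the following shifted analog of a classical lemma: for every shifted skew tableau $T$ and every inner corner $c$, the reading words \ts $r(T)$ \ts and \ts $r\bigl(J_c(T)\bigr)$ \ts are shifted Knuth equivalent; iterating then yields
\[
\rect_{SH}(T) \. = \. \PSW\bigl(r(T)\bigr).
\]
I would prove this lemma by a case analysis on each elementary step of a jeu de taquin slide. For slides whose trajectory stays strictly above the diagonal, the changes in $r(T)$ induced by each elementary swap are ordinary Knuth moves $acb \leftrightarrow cab$ with $a \le b < c$ and $bac \leftrightarrow bca$ with $a < b \le c$, exactly as in the classical straight-shape proof. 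For slides whose trajectory touches the diagonal, the additional shifted Knuth move --- the swap of the first two letters of the reading word --- is precisely what is needed to handle the transition of an entry into or out of a diagonal cell. Combined with \Cref{t:sw}(2), which says that $\PSW$ is a complete invariant of the shifted Knuth class, this yields the displayed identity and also shows that $\rect_{SH}(T)$ does not depend on the ordering of inner corners chosen in its definition.

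With the lemma in hand, the reduced identity follows immediately: by the defining property of ordinary RSK, the permutation $\bfa$ and the reading word $r\bigl(P(\bfa)\bigr)$ are Knuth equivalent, hence shifted Knuth equivalent, so
\[
\PSW(\bfa) \. = \. \PSW\bigl(r(P(\bfa))\bigr) \. = \. \rect_{SH}\bigl(P(\bfa)\bigr),
\]
as required. The main obstacle will be the diagonal cases of the lemma: when a slide bumps an entry across the diagonal, the corresponding letter in the reading word must transition between being unmarked in a row and, in effect, marked in a column, and one must produce explicitly the sequence of ordinary and shifted Knuth moves on $r(T)$ realizing this transition. Once these cases are settled, the remainder is formal bookkeeping.
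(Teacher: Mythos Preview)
The paper does not prove \Cref{t:rectification}; it is quoted as \cite[Thm~2.25]{Ser} and used as a black box. There is therefore no proof in the paper to compare your proposal against.

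That said, your outline is sound. The reduction via \Cref{t:mixed}(2) and the $P\leftrightarrow Q$ symmetry of RSK is correct, and the key lemma you isolate --- that a shifted jeu de taquin slide preserves the shifted Knuth class of the reading word, hence $\rect_{SH}(T)=\PSW(r(T))$ --- is precisely the core of the Worley--Sagan theory (see \cite{Wor,Sag} and \cite{Hai2}). The diagonal case you flag as the main obstacle is indeed where the shifted Knuth move (swap of the first two letters) is required, and this case analysis is carried out in Worley's thesis. Once that lemma is in hand, your final chain of equalities is exactly right: $\bfa\equiv r(P(\bfa))$ in the ordinary sense, hence $\bfa\hatequiv r(P(\bfa))$, hence $\PSW(\bfa)=\PSW(r(P(\bfa)))=\rect_{SH}(P(\bfa))$. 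So your proposal reconstructs the standard argument underlying Serrano's theorem rather than offering a genuinely different route.
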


\medskip

\subsection{Reduced words for fully commutative permutations}
\label{ss:fc}
The symmetric group is generated by the simple transpositions $\{s_1,\dots, s_n\}$ with relations
\[
s_i s_j = s_j s_i \ \ \text{for} \ |i - j| > 1, \ \ \ s_i s_{i+1} s_i = s_{i+1} s_i s_{i+1}\ \ \ \mathrm{and} \ \ \ s_i^2 = 1.
\]
The first relation is called a \defn{commutation relation}, while the second is called a \defn{braid relation}.
For $w \in W$, we say that \ts $\bfa = (a_1,\dots,a_p)$ \ts is a \defn{reduced word} of $w$ if \ts
$w = s_{a_1} \dots s_{a_p}$ \ts and $p=\ell(w)$ is the number of inversions in~$w$.
Let $\mr(w)$ denote the set of reduced words of~$w$.
In this setting, the \emph{Matsumoto--Tits theorem} (see e.g.~\cite[Thm~25.2]{Bump}),
says that $\mr(w)$ is connected by commutation relations and braid relations.
A permutation $w$ is called \defn{fully commutative} if $\mr(w)$ is connected only by commutation relations.
We summarize a key result about fully commutative permutations from~\cite{BJS}.

\smallskip

\begin{theorem}[{\cite[$\S$2]{BJS}}]
\label{t:fc}
For each fully commutative permutation $w$, there is a skew shape \ts
$\sigma(w) = \lambda/\mu$ \ts and a bijection \ts $\FC:\SYT(\sigma(w)) \to \mr(w)$, so that
\[
\Des\bigl(\FC(T)\bigr) \. = \. \Des(T)^r.
\]
\end{theorem}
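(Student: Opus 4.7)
The plan is to route through the theory of heaps of reduced words, specialized to the fully commutative case. To any reduced word $\bfa = (a_1, \ldots, a_p) \in \mr(w)$ I associate its heap: the partial order on $\{1, \ldots, p\}$ generated by declaring $i \prec j$ whenever $i < j$ and $|a_i - a_j| \leq 1$. Commutations of non-adjacent generators leave this poset invariant, so the full-commutativity assumption on $w$ forces the heap $H(w)$ to be an invariant of $w$ itself. Linear extensions of $H(w)$, read off with their inherited labels $a_{\cdot}$, biject with $\mr(w)$; each linear extension of $H(w)$ determines a reduced word of $w$ and conversely.

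The heart of the proof is to identify $H(w)$ with the cell poset of a skew Young diagram $\sigma(w) = \lambda/\mu$. For a fully commutative (equivalently $321$-avoiding) $w$, one embeds the elements of $H(w)$ into $\mathbb{Z}^2$ so that an element labeled $k$ lies on the diagonal $\{(i,j) : j - i = k - c_w\}$ for a suitable shift $c_w$. The $321$-avoidance is precisely what controls how successive occurrences of the same label, and of adjacent labels, stack, and after a global translation the resulting configuration exactly fills a skew shape. Verifying this identification is the most technical step, since one must check both that the embedding is well-defined and injective, and that the componentwise order on cells recovers the heap order --- this amounts to tracking crossings in the wiring diagram of $w$ and invoking the $321$-avoidance at each potential conflict.

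Once $\sigma(w)$ is in hand, the bijection $\FC$ is essentially tautological: for $T \in \SYT(\sigma(w))$, set the $k$th letter of $\FC(T)$ equal to the diagonal label $c_w + (j - i)$ of the cell $(i,j)$ of $T$ containing $k$. By the previous step this word lies in $\mr(w)$, and the map is manifestly a bijection. The labeling satisfies $L(i, j+1) = L(i, j) + 1$ and $L(i+1, j) = L(i, j) - 1$, so a case analysis on the relative positions of the cells of $T$ containing $k$ and $k+1$ relates $\Des(\FC(T))$ directly to $\Des(T)$. The specific reversal $\Des(T)^r$ in the statement is then an artifact of reading convention: it appears once we either read the tableau in the order $p, p-1, \ldots, 1$, or, equivalently, precompose with Sch\"utzenberger evacuation; both operations reverse descent sets.

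The main obstacle is the rigid geometric realization in the middle step: producing $\sigma(w)$ in a way that makes both the embedding of $H(w)$ and the descent behavior transparent. A clean alternative I would try first is to build $\sigma(w)$ recursively from the Lehmer code (or the inversion table) of $w$, where extending $w$ by one generator on the right corresponds to appending a single cell to the outside of $\sigma(w)$; the $321$-avoidance then ensures that the new cell attaches in one of the two allowed ways (south or east of the current outer corner), and the bijection and the descent identity can be verified in parallel with the construction.
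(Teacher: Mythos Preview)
Your outline is essentially correct and aligns with what the paper does. The paper does not give a self-contained proof of this theorem (it is cited from \cite{BJS}), but it describes the map $\FC$ following Stembridge \cite{Ste2} via a partial order on the inversion set $I(w)$; this inversion poset is canonically isomorphic to your heap $H(w)$, so the two constructions coincide.

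Your treatment of the descent reversal is correct in spirit but slightly imprecise, and here the paper is more explicit than you are. With the natural heap-to-skew-shape embedding you describe (labels constant on antidiagonals), the case analysis you allude to in fact yields $\Des(\FC_0(T)) = \Des(T)$, not $\Des(T)^r$. The reversal is obtained not by ``reading the tableau in the order $p,p-1,\ldots,1$'' (that would produce a reduced word for $w^{-1}$, not $w$) but by complementing values and rotating the shape by $180^\circ$: the paper builds this in by \emph{defining} $\sigma(w)$ so that $D_{\sigma(w)}$ is the \emph{reverse} of the heap/inversion poset (see the Remark following the example after the theorem). This is precisely your ``precompose with evacuation'' suggestion, once one notes that for skew shapes evacuation (value complementation) is a bijection $\SYT(\sigma) \to \SYT(\sigma^\vee)$ with $\sigma^\vee$ the $180^\circ$ rotation and satisfies $\Des(T^c) = \Des(T)^r$. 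So your $\sigma(w)$ should be the rotation of the natural heap shape, a point worth making explicit.

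Your proposed alternative via a recursive construction from the Lehmer code is reasonable, but the inversion-poset description the paper uses is already quite clean and makes the skew shape realization immediate.
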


\smallskip

Our description of $\FC$ follows~\cite{Ste2}.
Put a partial order \ts $(I(w),<)$ \ts on the  \defn{inversion set}
\[
I(w) \. := \. \bigl\{(i,j)~:~i < j\,, \ w^{-1}(i) > w^{-1}(j)\bigr\}.
\]
We say that \ts $(i,j) \gtrdot (k,l) \in I(w)$ \ts if \ts $k = i$ \ts and \ts
$l = \min\{p > j: (j,p) \not\in I(w)\}$, or \ts $j = l$ \ts and \ts $i = \max\{p < k: (p,k) \not \in I(w)\}$.
The skew shape $\sigma(w)$ is now defined to be the poset $(I(w),<)$.
Each cell of $\sigma(w)$ corresponds to an inversion of~$w$, while
each \ts $T \in \SYT(\sigma(w))$ \ts provides an ordering of inversions for~$w$,
which corresponds to a reduced word of~$w$ as follows.
For $L$ a linear extension of \ts $(I(w),<)$, we define a word \ts
$\FC(L) = (a_1,\dots,a_p)$, where $a_i$ is the index so that
$$\prod^{i-1}_{j=1} \. L(j) \. s_{a_i} \, = \, \prod^{i}_{j=1} \. L(j)\..$$

\smallskip

\begin{example}{\rm
The permutations (given in one-line notation) $v = 241635$ and $w = 246135$ are fully commutative.
We can visualize their inversion sets \. $I(v) = \bigl\{(1,2), \. (1,4), \. (3,4), \. (3,6), \. (5,6)\bigr\}$
\. and \. $I(w) = I(v) \cup \bigl\{(1,6)\bigr\}$ \. using the diagrams
\begin{center}
\pic[scale = .4,yscale=-1]
\node at (0,1) (0/0) {1};
\node at (1,1) (1/0) {2};
\node at (2,1) (2/0) {3};
\node at (3,1) (3/0) {4};
\node at (4,1) (4/0) {5};
\node at (5,1) (5/0) {6};

\node at (0,-4) (0/-3) {2};
\node at (1,-4) (1/-3) {4};
\node at (2,-4) (2/-3) {1};
\node at (3,-4) (3/-3) {6};
\node at (4,-4) (4/-3) {3};
\node at (5,-4) (5/-3) {5};

\draw (0/0) to (0,0) to (2,-2) to (2/-3);
\draw (1/0) to (1,0) to (0,-1) to (0/-3);
\draw (2/0) to (2,0) to (4,-2) to (4/-3);
\draw (3/0) to (3,0) to (1,-2) to (1/-3);
\draw (4/0) to (4,0) to (5,-1) to (5/-3);
\draw (5/0) to (5,0) to (3,-2) to (3/-3);

\draw[red] (1.5,-2.5) to (-.5,-.5) to (.5,.5) to (1.5,-.5) to (2.5,.5) to (3.5,-.5) to (4.5,.5) to (5.5,-.5) to (3.5, -2.5);
\draw[red] (.5,-1.5) to (1.5,-.5) to (3.5,-2.5);
\draw[red] (1.5,-2.5) to (3.5,-.5) to (4.5,-1.5);
\epic
\hspace{20pt}
\pic[scale = .4,yscale = -1]
\node at (0,1) (0/0) {1};
\node at (1,1) (1/0) {2};
\node at (2,1) (2/0) {3};
\node at (3,1) (3/0) {4};
\node at (4,1) (4/0) {5};
\node at (5,1) (5/0) {6};

\node at (0,-4) (0/-3) {2};
\node at (1,-4) (1/-3) {4};
\node at (2,-4) (2/-3) {6};
\node at (3,-4) (3/-3) {1};
\node at (4,-4) (4/-3) {3};
\node at (5,-4) (5/-3) {5};

\draw (0/0) to (0,0) to (3,-3) to (3/-3);
\draw (1/0) to (1,0) to (0,-1) to (0/-3);
\draw (2/0) to (2,0) to (4,-2) to (4/-3);
\draw (3/0) to (3,0) to (1,-2) to (1/-3);
\draw (4/0) to (4,0) to (5,-1) to (5/-3);
\draw (5/0) to (5,0) to (2,-3) to (2/-3);

\draw[red] (-.5,-.5) to (.5,.5) to (1.5,-.5) to (2.5,.5) to (3.5,-.5) to (4.5,.5) to (5.5,-.5) to (2.5, -3.5) to (-.5,-.5);
\draw[red] (.5,-1.5) to (1.5,-.5) to (3.5,-2.5);
\draw[red] (1.5,-2.5) to (3.5,-.5) to (4.5,-1.5);
\epic
\end{center}
The corresponding shapes are \ts $\sigma(241635) = (3,2,1)/(1)$ \ts while \ts $\sigma(246135) = (3,2,1)$.

The reduced word $(1,3,2,5,4,3) \in \mr(w)$ \ts with inversion sequence \ts
$(1,2),(3,4),(1,4),(5,6), (3,6),(1,6)$ \ts corresponds to the linear order shown below.
Complementing the values, rotating $45^\circ$ counter-clockwise and transposing, we obtain
the desired $T\in \SYT(\si(w))$.
%
\[		\raisebox{-.5\height}{\includegraphics[height=1.5cm]{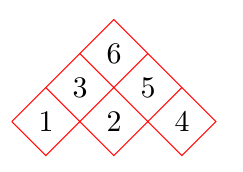}}\qquad \mapsto \qquad
       \raisebox{-.5\height}{ \includegraphics[height=1.5cm]{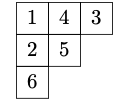}} = T. \]
One can check $\Des((1,3,2,5,4,3)) = \{2,4,5\}$ and $\Des(T) = \{1,2,4\} = \{2,4,5\}^r$.}
\end{example}

\smallskip

\begin{remark}
	{\rm The reason $\Phi$ reverses descent sets is that our conventions identify the skew shape $\lambda/\mu$ with the permutation $w$ such that $I(w)$ is the reverse of $D_{\lambda/\mu}$ as a poset.
	This choice is necessary for our proof outline to work.
	}
\end{remark}

\smallskip

A permutation with exactly one descent is called \defn{Grassmannian}.

\smallskip

\begin{proposition}
\label{p:grass}
	For $w$ Grassmannian, $\Red(w)$ is a single Knuth class.
\end{proposition}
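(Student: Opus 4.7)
The plan is to show that all reduced words in $\Red(w)$ share the same RSK insertion tableau $P$; since two words are Knuth equivalent if and only if their $P$-tableaux agree, this immediately yields the conclusion. First, I observe that Grassmannian permutations are $321$-avoiding and hence fully commutative, so by Theorem~\ref{t:fc} the inversion poset $\sigma(w)$ is a straight partition shape $\lambda$ (rather than a general skew shape), $|\Red(w)| = f^\lambda$, and $\Red(w)$ is connected purely by commutation relations (no braid moves are required).

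The main tool is Edelman--Greene (EG) insertion, a variant of RSK tailored for reduced words. It produces a bijection $\bfa \mapsto (P_{\EG}(\bfa), Q_{\EG}(\bfa))$ on $\Red(w)$, where $P_{\EG}$ is an ``EG tableau'' of $w$ and $Q_{\EG} \in \SYT(\lambda)$. A classical theorem of Edelman and Greene ensures $P_{\EG}(\bfa)$ is constant on each commutation class of reduced words; since $w$ is fully commutative, $\Red(w)$ is a single commutation class, so $P_{\EG}(\bfa) = P^\star$ is a fixed tableau depending only on $w$. The technical crux is that EG insertion coincides with ordinary RSK insertion on reduced words of fully commutative permutations: EG differs from RSK only at an exceptional bumping step, activated when a letter $x$ is inserted into a row already containing both $x$ and $x+1$ adjacently, and this configuration cannot arise during insertion of a reduced word of a fully commutative $w$ since it would force a braid-type subword.

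Consequently $P_{\RSK}(\bfa) = P_{\EG}(\bfa) = P^\star$ for every $\bfa \in \Red(w)$, and $\Red(w)$ lies in a single Knuth class. An alternative, more hands-on route avoids EG altogether: one shows directly that every commutation swap between adjacent letters $x,y$ (with $|x-y|\ge 2$) in a reduced word of Grassmannian $w$ can be realized by a single Knuth move on an enclosing triple $(z,x,y)$ or $(x,y,w)$, the key point being that for reduced words of Grassmannian $w$ the neighboring letter in the reduced word always takes a value strictly between $x$ and $y$, which matches the Knuth-move pattern $bac\leftrightarrow bca$ or $acb\leftrightarrow cab$.

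The main obstacle in either approach is the same structural point: ruling out the ``bad'' configurations that would obstruct $P$ being constant---either the exceptional EG pattern in a mid-insertion $P$-tableau, or a commutation swap whose enclosing triple fails the Knuth conditions. In both cases, full commutativity of the Grassmannian $w$ is precisely what blocks the obstruction, which is the step that requires the most care.
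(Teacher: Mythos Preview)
Both routes contain a real gap at exactly the crux step you flag. In the Edelman--Greene approach, the assertion that ``$P_{\EG}(\bfa)$ is constant on each commutation class of reduced words'' is not a theorem of Edelman--Greene and is in fact false: for the fully commutative $w = s_1 s_3$, the words $(1,3)$ and $(3,1)$ form a single commutation class but insert to $P$-tableaux of different shapes. What Edelman--Greene prove is that $P_{\EG}$ is constant on \emph{Coxeter--Knuth} classes, and for a fully commutative $w$ the Coxeter--Knuth classes are a refinement of the (unique) commutation class, not equal to it. A valid EG-based argument must instead show directly that a Grassmannian $w$ has a single Coxeter--Knuth class --- for instance via the fact that its Stanley symmetric function is the single Schur function $s_{\sigma(w)}$, forcing a unique $P_{\EG}$ --- and only then does the coincidence of EG with RSK for $321$-avoiders finish the job.

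The hands-on approach fails for the same structural reason: not every commutation in a reduced word of a Grassmannian $w$ sits inside a Knuth triple. In $\Red(w^{\delta_4})$ take $\bfa=(1,3,5,2,4,3)$ and commute the first two letters to obtain $(3,1,5,2,4,3)$; the only available neighbor is $a_3=5$, which does not lie between $1$ and $3$, so this commutation is not a single Knuth move (it requires a chain of three). The paper sidesteps both issues by transporting the problem through $\Phi$: a Knuth move on $\bfa\in\Red(w)$ corresponds under $\Phi^{-1}$ to a dual equivalence move on $\SYT(\sigma(w))$, and since $\sigma(w)$ is a straight shape for Grassmannian $w$, Haiman's result that $\SYT(\lambda)$ is a single dual equivalence class yields the conclusion.
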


\smallskip

While this proposition is implicit in~\cite{EG}, it can also be derived from $\FC$ by identifying the action of a Knuth move on $\bfa \in \Red(w)$ with the corresponding action on $\FC^{-1}(\bfa)$, which is a dual equivalence move as introduced in~\cite{Hai2}.
In either case, the proof is bijective.

\smallskip

Fix a Grassmannian permutation \ts $w^{\staircase_n} := 24\dots (2n) \. 13\dots (2n{-}1)$.
By identifying its inversions, it is clear that \ts $\sigma(w^{\staircase_n}) = \staircase_n$.

\smallskip

\begin{corollary}
\label{c:FC}
	The map~$\FC$ is a bijection from \ts $\SYT(\staircase_n)$ \ts to \ts
$\Red(w^{\staircase_n})$, and \ts $\Red(w^{\staircase_n})$ \ts  is a single Knuth class.
\end{corollary}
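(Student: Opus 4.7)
The corollary is essentially packaged from Theorem~\ref{t:fc} and Proposition~\ref{p:grass}, so the plan is to verify the two hypotheses those results require of $w^{\staircase_n}$ and to confirm that the inversion poset $\sigma(w^{\staircase_n})$ really is $\staircase_n$, as asserted in the sentence preceding the statement.

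First I would check that $w^{\staircase_n}$ is a Grassmannian permutation. In one-line notation the two blocks $2,4,\dots,2n$ and $1,3,\dots,2n-1$ are each strictly increasing, so the unique descent of $w^{\staircase_n}$ occurs between positions $n$ and $n+1$ (where $2n$ is followed by $1$). Because $w^{\staircase_n}$ is a shuffle of two increasing sequences, every decreasing subsequence has length at most two, so $w^{\staircase_n}$ avoids the pattern $321$; by the equivalence recalled in the paper's footnote this implies full commutativity. Thus Theorem~\ref{t:fc} applies and produces a bijection $\FC : \SYT(\sigma(w^{\staircase_n})) \to \Red(w^{\staircase_n})$, while Proposition~\ref{p:grass} applies and yields that $\Red(w^{\staircase_n})$ is a single Knuth class.

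The remaining task is to identify $\sigma(w^{\staircase_n})$ with $\staircase_n$. Here I would compute $I(w^{\staircase_n})$ explicitly: since an odd value $2a-1$ sits at position $n+a$ and an even value $2b$ at position $b$, and since no two values of the same parity form an inversion, the inversion set consists of the pairs $(2a-1, 2b)$ with $a \leq b$. I would then unpack the two clauses defining $\gtrdot$ on $I(w^{\staircase_n})$: each clause selects the next inversion after the first gap in a fixed row (resp.\ column) of the planar layout of $I(w^{\staircase_n})$, which is exactly the row/column adjacency inside the Young diagram. Matching these two kinds of steps with cell-adjacency in the staircase diagram gives the claimed poset isomorphism. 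This is the only step that requires any genuine verification, and it is the main (mild) obstacle; it is a picture-drawing exercise rather than a computation, which is why the paper declares it ``clear.''

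Combining the three inputs — full commutativity, the identification $\sigma(w^{\staircase_n}) = \staircase_n$, and the Grassmannian property — Theorem~\ref{t:fc} delivers the bijection $\FC : \SYT(\staircase_n) \to \Red(w^{\staircase_n})$ and Proposition~\ref{p:grass} delivers that $\Red(w^{\staircase_n})$ is a single Knuth class, which together are the corollary.
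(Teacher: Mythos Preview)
Your proposal is correct and follows exactly the approach the paper intends: the corollary is stated without proof because the sentence preceding it already records that $w^{\staircase_n}$ is Grassmannian and that $\sigma(w^{\staircase_n})=\staircase_n$, so \Cref{t:fc} and \Cref{p:grass} apply directly. Your write-up simply fills in the details (Grassmannian $\Rightarrow$ $321$-avoiding $\Rightarrow$ fully commutative, and the explicit description of $I(w^{\staircase_n})$) that the paper leaves to the reader.
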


\smallskip

When $w$ is Grassmannian,~\Cref{p:grass} shows that \ts $\FC^{-1}$ is differs from
the RSK recording tableau by a simple transformation (see also~\cite{EG}).
One can extend this relationship to the skew setting.
However, our proofs in \Cref{s:K} require working with~$\FC$.
Additionally, note that~$\FC$ can be computed in $O(n\log n)$ operations when $|\lambda| = n$ , while computing
an insertion algorithm or its inverse requires \ts $O\bigl(\max\{\lambda_1,\lambda'_1\} \cdot n \log n\bigr)$
\ts operations, cf.~\cite{PV2}.


\bigskip

\section{Main results}
\label{s:main}

Let $\mu \subset \delta_n$ and $T \in \SYT(\delta_n/\mu)$.
The proofs in this section rely on the map $\vp = \QSW \circ \Phi$:
\begin{equation}
\label{eq:bij}
T \xrightarrow{\FC} \bfa \xrightarrow{\QSW} \QSW(\bfa).
\end{equation}
By \Cref{t:sw}~(3) and \Cref{t:fc},  $\vp$ reverses descent sets.
When $\mu = \rho_{\ell,m}$, one checks for an appropriate choice of $T$ that $\vp(T) \in \ShSYT'(\vr_n/\tau_{\ell,m})$.
By applying properties of Knuth classes (see Proposition~\ref{p:shK2}) and assuming Theorem~\ref{t:DeWitt-SYT}, this implies $\vp$ is the desired bijection.
In this section, we give an independent combinatorial proof that $\vp$ is a bijection.

\subsection{The bijection for staircases}

As a warmup, we show $\vp$ is a bijection between $\SYT(\delta_n)$ and $\ShSYT'(\vr_n)$.
Our proof relies on a simple observation that is used implicitly in~\cite[$\S$10]{HPPW}.

\smallskip

\begin{proposition}
	\label{p:shK}
	For any $n$, $\Red(w^{\delta_n})$ is a single shifted Knuth class.
\end{proposition}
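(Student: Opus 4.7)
The plan is to combine \Cref{c:FC} with a short descent analysis. By \Cref{c:FC}, $\Red(w^{\delta_n})$ is a single (ordinary) Knuth class. Since every Knuth move is automatically a shifted Knuth move, $\Red(w^{\delta_n})$ is contained in a single shifted Knuth class, and it remains only to verify that $\Red(w^{\delta_n})$ is closed under the one extra generator of shifted Knuth equivalence: the swap $(a_1, a_2, a_3, \dots, a_p) \leftrightarrow (a_2, a_1, a_3, \dots, a_p)$. This swap preserves the underlying permutation precisely when $s_{a_1}$ and $s_{a_2}$ commute, i.e., when $|a_1 - a_2| \geq 2$. So the key claim I would prove is that every $\bfa \in \Red(w^{\delta_n})$ satisfies $|a_1 - a_2| \geq 2$.

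To establish this claim, I would read off left descents directly from the one-line notation of $w^{\delta_n}$. Because $w^{\delta_n}$ lists all of its even values (in increasing order) before all of its odd values (also in increasing order), its left descents are exactly the odd values appearing among $\{1, 3, \dots\}$; in particular $a_1$ is odd, say $a_1 = 2k-1$. Left multiplication by $s_{2k-1}$ swaps only the positions of the two values $2k-1$ and $2k$, so the only indices $a$ whose left-descent status could change are $a \in \{2k-2, 2k-1, 2k\}$. A direct inspection of these three cases shows that $s_{2k-1} w^{\delta_n}$ has no even left descents, still has every odd value other than $2k-1$ as a left descent, and has lost $2k-1$ from its descent set. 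Hence $a_2$ is odd and distinct from $a_1$, giving $|a_1 - a_2| \geq 2$.

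The only ``obstacle'' is this descent bookkeeping, which is routine once one writes $w^{\delta_n}$ and $s_{2k-1} w^{\delta_n}$ explicitly; there is no deeper difficulty. Combining the conclusion with closure under ordinary Knuth moves (from \Cref{c:FC}) shows that $\Red(w^{\delta_n})$ is closed under all shifted Knuth moves, and therefore the shifted Knuth class of any $\bfa \in \Red(w^{\delta_n})$ is exactly $\Red(w^{\delta_n})$.
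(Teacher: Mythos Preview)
Your proof is correct and follows essentially the same approach as the paper: use \Cref{c:FC} to get a single Knuth class, then verify closure under the extra shifted Knuth generator by showing $|a_1-a_2|\ge 2$. The only difference is in how that last inequality is justified: the paper appeals to the definition of $\FC$ to conclude that the first two letters of any $\bfa\in\Red(w^{\delta_n})$ have the same parity, whereas you carry out the equivalent left-descent bookkeeping on $w^{\delta_n}$ and $s_{2k-1}w^{\delta_n}$ directly. Both routes establish the same fact (indeed, both first letters are odd), so this is a matter of presentation rather than a genuinely different argument.
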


\begin{proof}
By~\Cref{c:FC}, the result will follow by confirming that exchanging the first two entries of $\bfa \in \Red(w^{\delta_n})$ results in another element of $\Red(w^{\delta_n})$.
This follows from the definition of $\FC$, which implies in this case that the first two entries have the same parity so the exchange is a commutation relation.
\end{proof}

For a shifted shape $\lambda$, recall $M^\lambda$ is the minimal increasing tableau of shape $\lambda$ in the alphabet \ts $\{1,2,\dots\}$.
For example, 
\[
 \raisebox{\height}{\begin{ytableau}
 	1 & 2 & 3& 4\\
 	\none & 3 & 4 & 5\\
 	\none & \none & 5 & 6\\
 	\none & \none & \none & 7
 \end{ytableau}} \quad = \ M^{\vr_5}.
\]

\smallskip

\begin{lemma}\label{l:PSW}
	Let \ts $\bfa \in \Red(w^{\delta_n})$.
	Then \ts $\PSW(\bfa) \ts = \ts M^{\vr_n}$.
\end{lemma}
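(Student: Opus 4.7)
My plan is to leverage the fact that the lemma concerns an entire shifted Knuth class and reduce the problem to a single insertion computation. By \Cref{p:shK}, all reduced words in $\Red(w^{\delta_n})$ are shifted Knuth equivalent, and by \Cref{t:sw}(2) the tableau $\PSW(\bfa)$ is invariant under shifted Knuth equivalence. Consequently, it suffices to exhibit one $\bfa \in \Red(w^{\delta_n})$ for which $\PSW(\bfa) = M^{\vr_n}$.

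For the explicit word, I would take the "diagonal" reduced word obtained by concatenating decreasing blocks, namely
\[
\bfa_n \, = \, (1)\.(3,2)\.(5,4,3)\.(7,6,5,4)\.\cdots\.(2n{-}3,\. 2n{-}4,\. \ldots,\. n{-}1),
\]
which one sees is reduced for $w^{\delta_n}$ either by checking it matches $\FC$ of the row-reading standard filling of $\delta_n$, or by direct verification via commutation moves. The base case $n = 4$ is exactly Example~\ref{ex:ws}, whose computation already yields $M^{\vr_4}$, so the target pattern is correct.

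To finish, I would argue by induction on $n$: assuming $\PSW(\bfa_{n-1}) = M^{\vr_{n-1}}$, insert the final block $(2n{-}3,\.2n{-}4,\.\ldots,\.n{-}1)$ and show that the inserted letters land precisely in the cells forming the new outermost anti-diagonal of $M^{\vr_n}$, filling them with values $i+j-1$ (the closed form for $M^{\vr_n}_{ij}$). The heart of the induction is to track each Worley--Sagan bump in turn: the first letter $2n{-}3$ row-inserts to the end of row~$1$; the next letter $2n{-}4$ bumps a cascade of row insertions that shift entries one diagonal outward; and so on. The main obstacle is verifying that no column insertion is ever triggered during this cascade (so that the shape grows row by row into $\vr_n$) and that each bumped entry lands in precisely the position where $M^{\vr_n}$ has that value; both follow from the explicit formula $M^{\vr_n}_{ij} = i+j-1$ and the fact that each inserted letter enters strictly to the right of the previously inserted one in its row. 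Once these bookkeeping facts are checked, the induction concludes and the lemma follows.
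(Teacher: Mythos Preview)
Your proof is correct and follows the same strategy as the paper: invoke \Cref{p:shK} together with \Cref{t:sw}(2) to reduce the claim to a single explicit reduced word, then verify the Worley--Sagan insertion for that word. The only difference is the choice of representative --- the paper picks the diagonal-reading word $(1,3,5,\dots,2,4,\dots,3,5,\dots)$ and simply declares the verification ``easy'', whereas you pick the column-reading word and sketch an inductive check (which indeed goes through, as your $n=4$ base case already appears as \Cref{ex:ws}).
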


\begin{proof}
	By~\Cref{p:shK}, the result follows if it holds for some $\bfa \in \Red(w^{\delta_n})$.
	Verifying this fact is easy for the reduced word $(1,3,5,\dots, 2n{-}1,2,4\dots, 2n{-}2, 3,5, \dots, 2n{-}3,\dots) \in \Red(w^{\delta_n})$.
\end{proof}

\smallskip

\begin{proposition}
	\label{p:staircase}
	The map~$\vp$ is a bijection from \ts $\SYT(\delta_n)$ \ts to \ts $\ShSYT'(\vr_n)$.
Hence, \ts $s_{\delta_n} = P_{\vr_n}$.
\end{proposition}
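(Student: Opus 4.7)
The plan is to assemble the bijection by composing the two maps in $\vp = \QSW \circ \FC$ and verifying that each is a bijection onto the appropriate intermediate set, then deduce the symmetric function identity from descent-preservation.

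First, by \Cref{c:FC}, $\FC$ is a bijection from $\SYT(\staircase_n)$ onto $\Red(w^{\staircase_n})$, which is a single (ordinary) Knuth class. By \Cref{p:shK}, this same set $\Red(w^{\staircase_n})$ is also a single \emph{shifted} Knuth class, and by \Cref{l:PSW} every reduced word $\bfa \in \Red(w^{\staircase_n})$ satisfies $\PSW(\bfa) = M^{\vr_n}$. Now invoke \Cref{t:sw}(1), which gives a bijection $\bfa \mapsto (\PSW(\bfa), \QSW(\bfa))$ from words to pairs of shifted tableaux of common shape. Since every word in $\Red(w^{\staircase_n})$ has the same $\PSW$-tableau (namely $M^{\vr_n}$, of shifted shape $\vr_n$), the restriction of $\QSW$ to $\Red(w^{\staircase_n})$ is injective, with image contained in $\ShSYT'(\vr_n)$.

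For surjectivity, let $Q \in \ShSYT'(\vr_n)$ be arbitrary. By \Cref{t:sw}(1), there is a unique word $\bfa$ with $(\PSW(\bfa), \QSW(\bfa)) = (M^{\vr_n}, Q)$. By \Cref{t:sw}(2), this $\bfa$ is shifted Knuth equivalent to any fixed word in $\Red(w^{\staircase_n})$ (all of which share the $P$-tableau $M^{\vr_n}$ by \Cref{l:PSW}); by \Cref{p:shK} the entire shifted Knuth class lies in $\Red(w^{\staircase_n})$, so $\bfa \in \Red(w^{\staircase_n})$. Hence $\QSW$ restricts to a bijection $\Red(w^{\staircase_n}) \to \ShSYT'(\vr_n)$. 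Composing with $\FC$ proves that $\vp$ is a bijection $\SYT(\staircase_n) \to \ShSYT'(\vr_n)$.

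For the identity $s_{\staircase_n} = P_{\vr_n}$, the key point is that $\vp$ reverses descent sets: by \Cref{t:fc} we have $\Des(\FC(T)) = \Des(T)^r$, and by \Cref{t:sw}(3) we have $\Des(\QSW(\bfa)) = \Des(\bfa)$, so $\Des(\vp(T)) = \Des(T)^r$. Combining \Cref{l:rev-comp} applied to $s_{\staircase_n}$ with \Cref{c:fundamental}(b) applied to $P_{\vr_n}$, the bijection $T \mapsto \vp(T)$ matches the terms on both sides:
\[
s_{\staircase_n} \, = \, \sum_{T \in \SYT(\staircase_n)} \sum_{\bfi \in I_{\Des(T)^r}} x^{T(\bfi)} \, = \, \sum_{T' \in \ShSYT'(\vr_n)} \sum_{\bfi \in I_{\Des(T')}} x^{T'(\bfi)} \, = \, P_{\vr_n}.
\]

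There is no substantive obstacle here: the proof is really an assembly of \Cref{c:FC}, \Cref{p:shK}, \Cref{l:PSW}, and \Cref{t:sw}. The only subtlety worth emphasizing is the double use of \Cref{p:shK}: once to guarantee that $\QSW$ is well-defined on $\Red(w^{\staircase_n})$ as a map into $\ShSYT'(\vr_n)$, and once (together with \Cref{t:sw}(2)) to ensure that any preimage word under $\QSW$ of a tableau in $\ShSYT'(\vr_n)$ lies back in $\Red(w^{\staircase_n})$, yielding surjectivity.
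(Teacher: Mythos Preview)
Your proof is correct and follows the same approach as the paper: assemble $\vp = \QSW \circ \FC$, use \Cref{c:FC}, \Cref{p:shK}, \Cref{l:PSW}, and \Cref{t:sw} to verify bijectivity, then derive the symmetric function identity from descent reversal via \Cref{l:rev-comp} and \Cref{c:fundamental}. The paper's proof is extremely terse (and in fact contains an apparent typo, citing \Cref{p:staircase} within its own proof where \Cref{p:shK} is meant), whereas you spell out the injectivity and surjectivity of $\QSW\!\restriction_{\Red(w^{\delta_n})}$ carefully; this added care is appropriate, and your use of \Cref{t:sw}(2) together with \Cref{p:shK} to pull the preimage word back into $\Red(w^{\delta_n})$ is exactly the intended argument.
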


\begin{proof}
By \Cref{c:fundamental} and \Cref{l:rev-comp}, the result will follow from a  bijection \ts $\SYT(\delta_n) \to \ShSYT'(\vr_n)$ that reverses descent sets.
By \Cref{p:staircase} and \Cref{l:PSW}, the map $\vp$ is a bijection.
\end{proof}

\smallskip

\medskip

\subsection{Proof of Theorem~\ref{t:skew}}
To prove~\Cref{t:skew}, we will need to describe the fully commutative permutation corresponding to $\delta_n/\mu$ and a small extension of~\Cref{p:shK}.

Given $\mu = (\mu_1,\dots,\mu_\ell) \subseteq \staircase_n$, let $w^{\staircase_n/\mu}$ be the permutation obtained from $w^{\staircase_n}$ by sequentially moving $(2i-1)$ to the left $\mu_i$ positions for $i \in [k]$.
For example, for $n=6$, we have \. $w^{\delta_6} \ts = \ts 2 \ts 4 \ts 6 \ts 8  \,\ts 10 \,\ts  1 \ts 3 \ts 5 \ts 7 \ts 9$ \. and \.
$w^{\delta_6/(3, 1)} \ts = \ts 2 \ts 4 \ts 1 \ts 6 \ts 8 \ts 3 \,\ts 10 \,\ts  5 \ts 7 \ts 9$.

\smallskip

\begin{lemma}
\label{l:skewFC}
	For all \ts $\mu \subseteq \staircase_n$, we have \ts $\sigma(w^{\staircase_n/\mu}) = \staircase_n/\mu$, \ts so \.  $\FC:\SYT(\staircase_n/\mu) \to \Red(w^{\staircase_n/\mu})$.
\end{lemma}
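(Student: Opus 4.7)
The plan is to establish $\sigma(w^{\delta_n/\mu}) = \delta_n/\mu$; the bijection $\FC$ then follows directly from \Cref{t:fc}. I would argue by induction on $|\mu|$, with base case $\mu = \emptyset$ supplied by \Cref{c:FC}.

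For the inductive step, write $\mu = \mu^- \cup \{(i, \mu^-_i + 1)\}$, where $\mu^-$ is obtained from $\mu$ by removing one cell in row $i$. Passing from $w^{\delta_n/\mu^-}$ to $w^{\delta_n/\mu}$ amounts to moving the entry $2i-1$ one additional step to the left, i.e., right multiplication by a simple transposition $s_k$ that strictly reduces length (since the left neighbor of $2i-1$ is always a strictly larger even value by the construction). I would then verify: (a)~$w^{\delta_n/\mu}$ remains fully commutative --- the odd entries are always separated by strictly larger evens, so any potential $321$ pattern would require two chained descents, and each descent is sandwiched between increasing runs of evens, ruling this out; and (b)~the destroyed inversion corresponds to the newly added cell of $\mu$. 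For (b), using the natural bijection
$$ \phi : D_{\delta_n/\mu} \longrightarrow I(w^{\delta_n/\mu}), \qquad \phi(a, j) := (2a-1,\, 2(n-j)), $$
a direct computation of the positions of the odd entries after all previous moves shows that the destroyed inversion is precisely $\phi(i, \mu^-_i + 1)$, matching the cell just added to $\mu$.

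I would then invoke the standard principle for fully commutative permutations: if $w$ is fully commutative with $\sigma(w) = \lambda/\nu$ and $ws_k$ is fully commutative with $\ell(ws_k) = \ell(w) - 1$, then $\sigma(ws_k) = \lambda/\nu'$, where $\nu'$ is $\nu$ with one cell added --- namely the cell corresponding to the destroyed inversion under the canonical identification of cells with inversions. Applied iteratively, this gives $\sigma(w^{\delta_n/\mu}) = \delta_n/\mu$, and hence the bijection $\FC$. The main obstacle is justifying this principle; it is most cleanly established via heaps (following~\cite{Ste2}), showing that the heap of any reduced word of $ws_k$ is obtained from that of $w$ by removing the maximal element corresponding to the destroyed inversion. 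Alternatively, one can verify the conclusion directly using the cover formula from \Cref{t:fc}: the parity structure of our inversion set --- every pair has odd first and even second coordinate --- collapses the relevant quantifiers to a clean row/column cover structure matching the Young covers of $\delta_n/\mu$ under~$\phi$, modulo careful handling of boundary cells of the skew shape.
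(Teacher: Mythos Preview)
Your inductive argument is correct in outline, but it is considerably more elaborate than what the paper actually does: the paper's proof is the single sentence ``The result follows from direct inspection.'' That is, one writes down $w^{\delta_n/\mu}$ explicitly, reads off its inversion set, and checks directly that the cover relations from \Cref{t:fc} match those of $D_{\delta_n/\mu}$ under the order-reversing identification noted in the remark following the example there. No induction or auxiliary principle is invoked.

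Two minor points on your argument. First, your justification for full commutativity---``the odd entries are always separated by strictly larger evens''---is not literally true: in $w^{\delta_6/(3,1)} = 2\,4\,1\,6\,8\,3\,10\,5\,7\,9$ the entries $5,7,9$ are consecutive with no even between them. The correct (and simpler) observation is that both the odd-valued and even-valued subsequences of $w^{\delta_n/\mu}$ are increasing, so any length-$3$ subsequence contains two elements of the same parity in increasing order, ruling out a $321$ pattern. Second, your ``standard principle''---that right-multiplying a fully commutative $w$ by a length-decreasing $s_k$ deletes a single maximal heap node and hence removes an inner corner of the skew shape under the paper's reversed identification---is indeed a true fact about heaps, but proving it carefully via~\cite{Ste2} or via the explicit cover formula is at least as much work as the direct verification the paper has in mind. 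Your approach buys a clean conceptual framework that would generalize to other families; the paper's approach buys brevity for this specific family, since the inversion set and covers of $w^{\delta_n/\mu}$ are transparent from the explicit one-line form.
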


\begin{proof}
	The result follows from direct inspection.
\end{proof}

\smallskip

\begin{proposition} \label{p:shK2}
	Let \ts $\mu \subseteq \delta_n$.
	Then \ts $\Red\bigl(w^{\delta_n/\mu}\bigr)$ \ts is a union of shifted Knuth classes.
\end{proposition}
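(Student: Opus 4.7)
My plan is to follow the template of Proposition \ref{p:shK} by showing closure of $\Red(w^{\delta_n/\mu})$ under each generator of shifted Knuth equivalence: (a) ordinary Knuth moves, and (b) exchange of the first two entries.

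For (a), I will use that $w^{\delta_n/\mu}$ is fully commutative (immediate from Lemma \ref{l:skewFC}, since $\sigma(w^{\delta_n/\mu}) = \delta_n/\mu$ exists as a skew shape). On a reduced word of any fully commutative permutation, a Knuth move $acb \leftrightarrow cab$ with $a \leq b < c$ must have $|a - c| \geq 2$: otherwise $c = a+1$ forces $b = a$, producing the forbidden braid $s_a s_{a+1} s_a$. Hence each Knuth move is a commutation, preserving the reduced word for the same permutation; the mirror case $bac \leftrightarrow bca$ is symmetric.

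For (b), I plan to show that $a_1$ and $a_2$ are both odd (hence $|a_1 - a_2| \geq 2$ and $s_{a_1} s_{a_2} = s_{a_2} s_{a_1}$). The strategy is to track positions in $w^{\delta_n/\mu}$ explicitly. From the sequential-move construction, value $2j - 1$ ends up at position $p_j := n - 1 + j - \mu_j$: subsequent moves of $2k-1$ for $k > j$ only affect positions $\geq p_k > p_j$, leaving $2j-1$ in place. The even values $2, 4, \ldots, 2(n-1)$ then fill the remaining positions of $[1, 2n-2]$ in increasing order. Counting positions strictly less than $p_{i+1} = n + i - \mu_{i+1}$ gives exactly $i$ odd and $n - 1 - \mu_{i+1}$ even ones, and the inequality $n - 1 - \mu_{i+1} \geq i$ is precisely the hypothesis $\mu_{i+1} \leq n - i - 1$ from $\mu \subseteq \delta_n$. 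Hence the position of $2i$ is strictly less than the position of $2i+1$, so no even value is a left descent of $w^{\delta_n/\mu}$, and $a_1$ is odd. A parallel case analysis on $s_{a_1} w^{\delta_n/\mu}$ (which merely transposes the positions of values $2i-1$ and $2i$ where $a_1 = 2i-1$) and examining the three comparisons at $v \in \{2i-2, 2i-1, 2i\}$ (using $\mu_{i+1} \leq \mu_i$ and the monotonicity of the "$i$-th smallest even position" in $i$) confirms no new even left descent is created, so $a_2$ is also odd.

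I expect the principal obstacle to be the position bookkeeping under the sequential moves — verifying that the formula $p_j$ is genuinely the final position of value $2j-1$ and that the even values fill the remaining spots monotonically in increasing order. Once those positional facts are in hand, the descent inequalities and the case analysis for $a_2$ reduce to direct applications of $\mu \subseteq \delta_n$.
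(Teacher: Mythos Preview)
Your proof is correct and follows essentially the same strategy as the paper: show that $\Red(w^{\delta_n/\mu})$ is closed under ordinary Knuth moves (since $w^{\delta_n/\mu}$ is fully commutative, any Knuth move is forced to be a commutation) and then that the first two entries of any reduced word are both odd, so their exchange is again a commutation. The paper derives the parity of $a_1,a_2$ more tersely by appealing to the structure of~$\FC$, whereas you carry out the explicit position-tracking; both arguments establish the same key fact, and your bookkeeping (that $p_j=n-1+j-\mu_j$ is strictly increasing in~$j$ and that the $i$-th even value lands strictly left of $p_{i+1}$ precisely because $\mu_{i+1}\le n-1-i$) is accurate.
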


\begin{proof}
The proof is essentially identical to that of~\Cref{p:shK}, except that \ts $\Red\bigl(w^{\delta_n/\mu}\bigr)$ \ts is a union of Knuth classes, hence a union of (possibly fewer) shifted Knuth classes.	
\end{proof}

\begin{proof}[Proof of~\Cref{t:skew}]
Let $\mu \subseteq \delta_n$.
By~\Cref{p:shK2}, $\Red(w^{\delta_n/\mu})$ is a union of shifted Knuth classes.
Hence, by~\Cref{t:sw}~(2), the map~$\vp$ from~\eqref{eq:bij} is a bijection
\[
\SYT(\delta_n/\mu) \, \to \, \bigsqcup_{\lambda \in M_{\delta_n/\mu}} \ShSYT'(\lambda),
\]
for some multiset of partitions determined by $\delta_n/\mu$.
This gives a bijective proof that \ts $s_{\delta_n/\mu} \in \wt{\Lambda}$.

To prove \ts $s_{\delta_n/\mu} = s_{\delta_n/\mu'}$, we show that $\vp$,
as applied to both $\SYT(\delta_n/\mu)$ and $\SYT(\delta_n/\mu')$,
results in the same multisets of shifted standard tableaux with marked entries.
Indeed, let $\vp$ be the map for \ts $\SYT(\delta_n/\mu)$,
and let $\vp'$ be the map for \ts $\SYT(\delta_n/\mu')$.  Note that for \ts
$T \in \SYT(\delta_n/\mu)$ \ts with \ts $\FC(T) = (a_1,\dots,a_p)$, we have
\ts $\FC(T') = (n-a_1,\dots,n-a_p)$.
Therefore, $\Des(\FC(T')) = \Des(\FC(T))^c$.

Since $\QSW$ is invariant under shifting
each entry of a word, by~\Cref{t:neg} we have
\begin{equation}\label{e:stem}
T \ \xrightarrow{\vp} \ \vp(T) \ \xrightarrow{\circ} \ \vp(T)^\circ  \ \xrightarrow{(\vp')^{-1}} \ (\vp')^{-1}(\vp(T)^\circ)
\end{equation}
is a descent set preserving bijection from $\SYT(\delta_n/\mu)$ to $\SYT(\delta_n/\mu')$, so the result follows by~\Cref{c:fundamental}.
\end{proof}

\begin{proof}[Proof of~\Cref{c:skew-LR}]
Recall the RSK interpretation of LR--coefficients, see e.g.~\cite[Thm~A1.3.1]{EC} and~\cite{Zel}.
The desired bijection now follows combining our bijection in~\eqref{e:stem} with RSK and $\RSK^{-1}$.
The details are straightforward.
\end{proof}

\medskip

\subsection{Proof of Theorem~\ref{t:dewitt}}
\label{ss:bijection}
We outline the strategy with an example.
\begin{example}\label{ex:inverse}{\rm 	
Consider the shifted tableaux $T \in \ShSYT'(\vr_6 - \tau^{2,2}), \wt{T} \in \ShSYT'(\vr_6)$ such that
{\large
\[
T  \ = \ \  \begin{ytableau}
 1 & 2 & 4 & 6' & 9'\\
 \none & 3 & 5 & 8 & 11'\\
 \none & \none & 7 & 10'
 \end{ytableau}
 \qquad \mathrm{and} \qquad
\wt{T}  \ = \ \  \begin{ytableau}
 1 & 2 & 4 & 6' & 9'\\
 \none & 3 & 5 & 8 & {\ns 11'} \\
 \none & \none & 7 & {\ns 10'} & {\ns 13'}\\
 \none & \none & \none & {\ns 12} & {\ns 14} \\
 \none & \none & \none & \none & {\ns 15} 	
 \end{ytableau}\ .
\]
}
The Worley--Sagan inverse of $(M^{\vr_6},\wt{T})$ is $\wt{\bfa} = (1,7,5,9,8,3,6,7,2,4,3,5,4,6,5)$.
Removing the last four entries of $\wt{\bfa}$, we have $\bfa = (1,7,5,9,8,3,6,7,2,4,3)$ satisfying $\QSW(\bfa) = T$.
Viewing \ts $\wt{\bfa}$ \ts as a linear extension of its heap, we obtain:
\begin{figure}[hbt]
		\vspace{-.5cm}
\[
\pic[scale=.3]
\draw[red] (0,0) -- (5,5);
\draw[red] (0,0) -- (1,-1) -- (6,4);
\draw[red] (1,1) -- (3,-1) -- (7,3);
\draw[red] (2,2) -- (5,-1) -- (8,2);
\draw[red] (3,3) -- (7,-1) -- (9,1);
\draw[red] (4,4) -- (9,-1) -- (10,0);
\draw[red] (5,5) -- (10,0);
\node at (1,0) {1};
\node at (3,0) {6};
\node at (5,0) {3};
\node at (7,0) {2};
\node at (9,0) {4};

\node at (2,1) {9};
\node at (4,1) {10};
\node at (6,1) {7};
\node at (8,1) {5};

\node at (3,2) {11};
\node at (5,2) {12};
\node at (7,2) {8};

\node at (4,3) {13};
\node at (6,3) {14};

\node at (5,4) {15};

\epic
\]

\end{figure}

\vskip-.3cm

\nin
Now we can compute:

\[
\FC^{-1}(\wt{\bfa})  \ = \ \
\begin{ytableau}
	1 & 3 & 5 & 7 & 15\\
	2 & 4 & 6 & 10\\
	8 & 9 & 13\\
	11 & 14\\
	12
\end{ytableau}  \ \ \in \  \SYT(\delta_6/\rho^{2,2})
 \quad \mathrm{and} \quad
\FC^{-1}(\bfa)  \ = \ \
\begin{ytableau}
	\none & \none & 1 & 3 & 1\\
	\none & \none & 2 & 6\\
	4 & 5 & 9\\
	7 & 10\\
	8
\end{ytableau}  \ \ \in \  \SYT(\delta_6/\rho^{2,2})\..
\]

\smallskip

\nin
Therefore, \ts $\wt{T} = \vp\bigl(\FC^{-1}(\wt{\bfa})\bigr)$, and
our bijection maps \ts $\FC^{-1}(\bfa)$ to $T$.
}\end{example}

\smallskip

We generalize~\Cref{ex:inverse} as follows:

\begin{proposition}
\label{p:inverse}
For positive integers $\ell \leq m < n$ with $\ell + m < n$, there is an injection
\[
\psi \, : \ \ShSYT'\bigl(\vr_n - \tau^{\ell,m}\bigr) \ \longrightarrow \ \SYT\bigl(\delta_n/\rho^{\ell,m}\bigr)
\]
that reverses descent sets.
\end{proposition}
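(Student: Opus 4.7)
My plan is to construct $\psi$ as a truncation of the inverse of the map $\vp = \QSW \circ \FC$ used in Proposition~\ref{p:staircase}. Given $T \in \ShSYT'(\vr_n - \tau^{\ell,m})$, I first extend $T$ to $\wt T \in \ShSYT'(\vr_n)$ by filling the interior trapezoid $\tau^{\ell,m}$ with the remaining labels $|T|+1, \ldots, |\vr_n|$ in a canonical order and marking pattern, as illustrated in Example~\ref{ex:inverse}. I then set $\wt\bfa := \QSW^{-1}(M^{\vr_n}, \wt T)$, $\bfa := (\wt a_1, \ldots, \wt a_{|T|})$, and $\psi(T) := \FC^{-1}(\bfa)$. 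By Lemma~\ref{l:PSW} together with the bijectivity of Worley--Sagan insertion (Theorem~\ref{t:sw}), the word $\wt\bfa$ lies in $\Red(w^{\delta_n})$, so its prefix $\bfa$ is a reduced word for some permutation of length $|T|$.

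\textbf{Main obstacle.} The crux is to show $\bfa \in \Red(w^{\delta_n/\rho^{\ell,m}})$, so that Lemma~\ref{l:skewFC} applies and $\psi(T)$ lands in $\SYT(\delta_n/\rho^{\ell,m})$. Equivalently, the canonical filling must be chosen so that the trailing letters $\wt a_{|T|+1}, \ldots, \wt a_{|\vr_n|}$ spell a reduced word for the permutation $(w^{\delta_n/\rho^{\ell,m}})^{-1} \. w^{\delta_n}$. I would establish this in two steps: first, verify it for the minimal tableau of $\ShSYT'(\vr_n - \tau^{\ell,m})$ by directly unwinding Worley--Sagan insertion and matching the resulting trailing segment against the inversions of $w^{\delta_n}$ absent from $w^{\delta_n/\rho^{\ell,m}}$; second, propagate to arbitrary $T$ using the fact that $\Red(w^{\delta_n})$ is a single shifted Knuth class (Proposition~\ref{p:shK}), so shifted Knuth moves performed among the first $|T|$ letters of $\wt\bfa$ neither alter its trailing segment nor disturb the canonical filling in the trapezoid. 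This is the step I expect to consume most of the technical work, since it requires understanding how inverse row/column bumping interacts with the boundary cells of $\tau^{\ell,m}$ inside $\vr_n$.

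\textbf{Descent reversal and injectivity.} The canonical filling places all entries exceeding $|T|$ inside the trapezoid, giving $\Des(T) = \Des(\wt T) \cap [|T|-1]$; truncation gives $\Des(\bfa) = \Des(\wt\bfa) \cap [|T|-1]$; and Theorem~\ref{t:sw}(3) gives $\Des(\wt\bfa) = \Des(\wt T)$. Combining these identities yields $\Des(\bfa) = \Des(T)$, and then Theorem~\ref{t:fc} gives $\Des(\psi(T)) = \Des(\bfa)^r = \Des(T)^r$. For injectivity, once the main claim is established, $\FC^{-1}$ is bijective on $\Red(w^{\delta_n/\rho^{\ell,m}})$ (Lemma~\ref{l:skewFC}), so $\psi(T_1) = \psi(T_2)$ forces $\bfa_1 = \bfa_2$. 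Since Worley--Sagan insertion is sequential, the $Q$-tableau obtained after inserting the first $|T|$ letters of $\wt\bfa_i$ equals $T_i$ (it is the restriction of $\wt T_i$ to the smaller entries), so $\QSW(\bfa_i) = T_i$; hence $\bfa_1 = \bfa_2$ implies $T_1 = T_2$, completing the argument.
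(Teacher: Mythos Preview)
Your construction of $\psi$ matches the paper's exactly: extend $T$ to $\wt T$ by canonically filling the trapezoid, invert Worley--Sagan against $M^{\vr_n}$, truncate the last $\ell m$ letters, and apply $\FC^{-1}$. You also correctly isolate the only real obstacle, namely that the trailing $\ell m$ letters form a fixed word $\bfc$ independent of $T$. Where you diverge from the paper is in how you handle this step. The paper argues by direct computation: tracking inverse Worley--Sagan bumping from the canonical-filling cells, one sees the bumping paths are confined to the rightmost columns of $M^{\vr_n}$ and never touch the region depending on $T$, so the output letters~\eqref{eq:word} are the same for every $T$ (this is illustrated on an example and the inductive argument is left to the reader). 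Your route---verify for one $T_0$, then propagate via shifted Knuth equivalence---is a genuine alternative and works, though the formulation should be sharpened: the phrase ``moves among the first $|T|$ letters of $\wt\bfa$ do not disturb the filling'' is circular as stated, since $\wt\bfa$ is the unknown. What actually runs is that once step~1 gives $\wt\bfa_0=(\bfa_0,\bfc)$ with $\PSW(\bfa_0)$ of shape $\vr_n-\tau^{\ell,m}$, Theorem~\ref{t:sw} supplies for each $T$ a word $\bfa'$ with $\PSW(\bfa')=\PSW(\bfa_0)$ and $\QSW(\bfa')=T$; since $\PSW(\bfa')=\PSW(\bfa_0)$, appending $\bfc$ reproduces $(M^{\vr_n},\wt T)$, and bijectivity of Worley--Sagan forces $\wt\bfa=(\bfa',\bfc)$. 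Proposition~\ref{p:shK} (or equivalently Proposition~\ref{p:shK2}) is then what places $\bfa'$ in $\Red(w^{\delta_n/\rho^{\ell,m}})$. Your approach is structurally cleaner and sidesteps the bumping-path bookkeeping; the paper's is more hands-on and makes the trailing word explicit.
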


\begin{proof}
	Let \ts $T \in \ShSYT'(\vr_n - \tau^{\ell,m})$.
	Following~\Cref{ex:inverse}, construct $\wt{T}$ by adding to $T$ as follows:

\begin{equation}
	\label{eq:prime-tab}
{\footnotesize
%
\ytableausetup{nosmalltableaux}
\begin{ytableau}
	\none & \none & \none & b'\\
	\none & \none & \iddots & \vdots\\
	\none & a' & \dots & c'\\
	& & & \\
	\vdots & \vdots & \vdots & \vdots\\
	& & & \\
	\none[a] & & & \\
	\none[\vdots] & \none[\ddots] &\ddots  & \vdots \\
	\none[b] & \none[\dots] & \none[c] &
\end{ytableau}
}\end{equation}

\smallskip

Fill the entries of $\rho^{m,\ell}$ column by column from left to right, top to bottom, with values from \ts
$\bigl\{\binom{n}{2} - \ell \cdot m+1,\ldots,\binom{n}{2}\bigr\}$, in increasing order.
Reflect the cutout staircase $\rho^{\ell,m} \setminus \tau^{\ell,m}$, marking each entry  and place the resulting tableau on top of $\rho^{\ell,m} \setminus \tau^{\ell,m}$.
The values $a,b,c$ in~\Cref{eq:prime-tab} show where corresponding entries are mapped by this operation.
Computing the Worley--Sagan inverse of $(M^{\vr_n},\wt{T}^\circ)$, we obtain some $\wt{\bfa} \in \Red(w^{\delta_n})$.
Removing the last $\ell\cdot m$ entries of $\wt{\bfa}$, we obtain~$\bfa$.
Now define \ts $\psi(T) := \bfa$.
Since map~$\psi$ is a restriction of $\vp^{-1}$, we see it reverses descent sets.
What remains is to show the image of $\psi$ lies in \ts $\SYT(\delta_n/\rho^{\ell,m})$.

Let \ts $a := m - \ell$.
We claim the last $\ell \cdot m$ entries $\bfa$ are necessarily of the form
\begin{equation}\label{eq:word}
(n + \ell - 1, n + \ell - 2, \dots, n - a - 1, n + \ell- 2, n +\ell - 3, \dots, n-a-2, \dots , n + m - 1, \dots, n - 1).	
\end{equation}
The proof is a straightforward induction using inverse Worley--Sagan insertion.
Rather than give complete details, we demonstrate the result with an example with
\ts $\ell = 2$, $m = 3$, and \ts $n = 6$.
Here we have:
\[\ytableausetup{smalltableaux}
M^{\vr_n} \ = \ \ \raisebox{\height}{\begin{ytableau}
 	1 & 2 & 3& 4 & 5\\
 	\none & 3 & 4 & 5 & 6\\
 	\none & \none & 5 & 6 & 7\\
 	\none & \none & \none & 7 & 8\\
 	\none & \none & \none & \none & 9
 \end{ytableau}}
 \qquad \mathrm{and} \qquad
 \wt{T}^\circ - T^\circ  \ = \ \
 \raisebox{\height}{\begin{ytableau}
 	 \cdot & \cdot  & \cdot & \cdot & \cdot \\
	\none & \cdot & \cdot & \cdot & 12'\\
 	\none & \none & \cdot & 10 & 13\\
 	\none & \none & \none & 11 & 14\\
 	\none & \none & \none & \none & 15
 \end{ytableau}}
\]
When inverting the insertions that add the $15, 14$ and $13$, we see they must come from row insertions with all bumps in the last column.
After this, we obtain
\[
M^{\vr_n} \ = \ \ \raisebox{\height}{\begin{ytableau}
 	1 & 2 & 3& 4 & 8\\
 	\none & 3 & 4 & 5 & 9\\
 	\none & \none & 5 & 6\\
 	\none & \none & \none & 7
 \end{ytableau}}
 \qquad \mathrm{and} \qquad
 \wt{T}^\circ - T^\circ \ = \ \
 \raisebox{\height}{\begin{ytableau}
 	 \cdot & \cdot  & \cdot & \cdot & \cdot \\
	\none & \cdot & \cdot & \cdot & 12'\\
 	\none & \none & \cdot & 10 \\
 	\none & \none & \none & 11 \\
 \end{ytableau}}
\]
Inverting the insertion that added $12'$, we must use column insertion.
Necessarily, the $9$ was bumped by the $7$, which was row bumped by the entries above it in the fourth column.
This results in:
\[
M^{\vr_n}  \ = \ \  \raisebox{\height}{\begin{ytableau}
 	1 & 2 & 3& 5 & 8\\
 	\none & 3 & 4 & 6\\
 	\none & \none & 5 & 7\\
 	\none & \none & \none & 9
 \end{ytableau}}
 \qquad \mathrm{and} \qquad
 \wt{T}^\circ - T^\circ  \ = \ \
\raisebox{\height}{ \begin{ytableau}
 	 \cdot & \cdot  & \cdot & \cdot & \cdot \\
	\none & \cdot & \cdot & \cdot \\
 	\none & \none & \cdot & 10 \\
 	\none & \none & \none & 11 \\
 \end{ytableau}}
\]
The remaining entries necessarily came from insertions occurring in the fourth column as well, resulting in
\[
M^{\vr_n}  \ = \ \  \raisebox{\height}{\begin{ytableau}
 	1 & 2 & 3& 7 & 8\\
 	\none & 3 & 4 & 9\\
 	\none & \none & 5
 \end{ytableau}}
 \qquad \mathrm{and} \qquad
 \wt{T}^\circ - T^\circ  \ = \ \
 \raisebox{\height}{\begin{ytableau}
 	 \cdot & \cdot  & \cdot & \cdot & \cdot \\
	\none & \cdot & \cdot & \cdot \\
 	\none & \none & \cdot
 \end{ytableau}}
\]
The last values of $\bfa$ in this example are $(6,5,4,7,6,5)$, as claimed in~\eqref{eq:word}.
The reader should be able to extend this example to an inductive proof of our claim with little difficulty but some tedium.
\end{proof}

Assuming~\Cref{t:dewitt}, we see~\Cref{p:inverse} gives a bijection from \ts $\ShSYT'(\vr_n - \tau^{\ell,m})$
\ts to \ts $\SYT(\delta_n/\rho^{\ell,m})$.
Since $\psi = \vp^{-1}$ when $\vp(T) \in \ShSYT'(\vr_n - \tau^{\ell,m})$, we can conclude $\vp$ is the desired bijection.
However, it remains to prove this fact directly without resorting to~\Cref{t:dewitt}.
In principle, one could have some $\bfa \in \Red(w^{\delta_n})$ that ends with the values in~\eqref{eq:word} such that $\QSW(\bfa)$ does not contain $\wt{T} - T$ as a subtableau.

Rather than prove directly that $\vp$ has the desired image, we outline a map essentially equivalent to $\vp$ using RSK, mixed shifted insertion and  jeu de taquin (this map could also be used for sampling).
For $\lambda$ a partition, let $S^\lambda \in \SYT(\lambda)$ be the \defn{superstandard tableau}, which is the unique standard tableau whose rows are consecutive.
Computing slides row-by-row, we observe:
\begin{equation}
	\label{eq:rect}
	S^{\vr_n} \, = \, \rect\bigl(S^{\delta_n}\bigr)\..
\end{equation}

\smallskip

The following lemma is implied by the proof of~\Cref{p:inverse} and~\Cref{t:mixed}~(2).

\smallskip

\begin{lemma}
\label{l:first-part}
	Let $T \in \SYT'(\tau_{r,\ell})$ be obtained from the tableau in~\eqref{eq:prime-tab} by reflecting across the line $y=x$ and complementing values.
	Then $T = \QSW\bigl(r(S^{\rho_{r,\ell}})\bigr)$.
\end{lemma}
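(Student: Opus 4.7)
As the paper hints, my approach combines~\Cref{t:mixed}\,(2) with the bump-chain analysis already carried out in the proof of~\Cref{p:inverse}. First, apply the identity $\PMS(w) = \QSW(w^{-1})$ with $w := r(S^{\rho^{r,\ell}})$. Using the explicit description of the reading word of a rectangular superstandard tableau---the entry $(i{-}1)\ell+j$ at position $(i,j)$ is read in position $p=(r{-}i)\ell+j$---a direct check shows $w$ is an involution: the symmetric formula $w(p)=(r{-}i)\ell+j$ with $p\mapsto (i,j)$ recovers $w(w(p))=p$. Consequently $\QSW(w) = \PMS(w)$, and it suffices to identify $\PMS(r(S^{\rho^{r,\ell}}))$ with the tableau described via~\eqref{eq:prime-tab}.

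Second, the inductive inverse Worley--Sagan computation in the proof of~\Cref{p:inverse} can be read in reverse. There it was shown that inverse WS insertion starting from $(M^{\vr_n}, \wt{T}^\circ)$ peels off the letters~\eqref{eq:word} as the last $\ell\cdot m$ entries of the resulting reduced word. Running this analysis forward amounts to WS inserting~\eqref{eq:word} into an empty tableau; after the harmless shift relabeling those letters to $\{1,\ldots,r\ell\}$, this coincides with WS insertion of $r(S^{\rho^{r,\ell}})$. The bump cascades already identified there determine each newly created cell together with its provenance: a row insertion (producing an unprimed entry in the recording tableau) or a column insertion (producing a primed entry, occurring precisely when a bump reaches the first entry of a row). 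Reflecting the tableau in~\eqref{eq:prime-tab} across $y=x$ and complementing its values then transforms (i) the column-by-column filling of its rectangle into a row-by-row filling matching the row-insertion cells, and (ii) the primed upside-down triangle into precisely the collection of primed cells produced by column-insertion cascades, with values aligning after complementation.

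\textbf{Main obstacle.} The combinatorial substance is already contained in the proof of~\Cref{p:inverse}; the remaining work is coordinate bookkeeping. Specifically, one must verify (i) that the output shape is exactly $\tau^{r,\ell}$, (ii) that the primed cells occupy the reflected image of the upside-down triangle in~\eqref{eq:prime-tab}, and (iii) that the complemented values agree cell by cell with those placed by WS insertion. This requires carefully aligning the coordinate system of~\eqref{eq:prime-tab} (a rectangle with an attached triangle above) with the shifted-tableau coordinates on $\tau^{r,\ell}$, and tracking how each stage of the bump cascade from~\Cref{p:inverse} translates, under reflect-and-complement, into the construction of~\eqref{eq:prime-tab}. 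The check is routine but delicate, and is the reason the lemma is stated separately rather than inlined into~\Cref{p:inverse}.
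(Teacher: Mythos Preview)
Your Step~1 is correct: the reading word $w:=r(S^{\rho_{r,\ell}})$ is indeed an involution (your index computation checks out), and \Cref{t:mixed}\,(2) then gives $\QSW(w)=\PMS(w)$. But you never actually use this reduction; in Step~2 you return to computing $\QSW$ directly.

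Step~2 contains a genuine gap. The word in~\eqref{eq:word} has \emph{repeated} letters: in the paper's own worked example with $\ell=2$, $m=3$, $n=6$ it is $(6,5,4,7,6,5)$. It therefore cannot be obtained from the permutation $r(S^{\rho_{r,\ell}})$ by any shift (or any order-preserving relabeling to $\{1,\dots,r\ell\}$). Furthermore, ``running the analysis of \Cref{p:inverse} forward'' does not mean WS-inserting~\eqref{eq:word} into an empty tableau: the forward direction inserts~\eqref{eq:word} into the nonempty intermediate $P$-tableau left after peeling, and the bump chains computed there depend essentially on that initial state. So neither the word nor the ambient $P$-tableau matches what the lemma requires.

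What the paper means by ``implied by the proof of \Cref{p:inverse}'' is that the same \emph{style} of bump-by-bump bookkeeping, applied afresh to WS-insertion of $r(S^{\rho_{r,\ell}})$ into the empty tableau, yields the tableau described. This is a separate (routine) computation, not a literal reuse of the one in \Cref{p:inverse}. Alternatively, having established $\QSW(w)=\PMS(w)$ in your Step~1, you can compute $\PMS\bigl(r(S^{\rho_{r,\ell}})\bigr)$ directly, which is arguably cleaner; either route closes the argument.
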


We now outline the alternate bijection to $\vp$:\footnote{Strictly speaking, it follows from~\Cref{t:dewitt} that our map~$\psi$ in~\Cref{p:inverse} is bijective.
The argument here is employed to obtain a self-contained proof.}

\smallskip

\begin{enumerate}
	\item For $P_0 \in \SYT(\delta_n/\rho_{r,\ell})$, complete it to $\wt{P_0} \in \SYT(\delta_n)$ with the same relative order \\
in \ts $D_{\delta_n/\rho_{r,\ell}}$ \ts so that \ts $\wt{T}\mid\rho_{r,\ell} = S^{\rho_{r,\ell}}$
	\item Let \ts $w := \RSK^{-1}\bigl(\wt{T},S^{\delta_n}\bigr)$
	\item By~\Cref{t:rectification} and Equation~\eqref{eq:rect}, the mixed shifted insertion maps \ts $w$ \ts
to \ts $\bigl(\PMS(w),S^{\vr_n}\bigr)$
	\item Flip \ts $\PMS(w)$ \ts across the \ts $y=x$ \ts line and complement its values to obtain \ts $\wt{P_1}$
	\item Define \ts $P_1 := \wt{P}_1 \mid D_{\vr_n - \tau_{r,\ell}}$
\end{enumerate}

\smallskip

\nin
By~\Cref{l:first-part}, we see the entries in $\wt{P}_1 - P_1$ are the same as those
in the tableau constructed in~\eqref{eq:prime-tab}.  Since each step outlined above is injective,
we obtain an injection from $\SYT(\delta_n/\rho_{r,\ell})$ to $\ShSYT'(\vr_n/-\tau_{r,\ell})$.

Note that Step~(3) is not obviously bijective, since~\Cref{t:rectification} only implies the forwards direction.
This completes our bijective proof of~\Cref{t:dewitt}, hence also of~\Cref{t:DeWitt-SYT}. \ \hfill $\sq$

\medskip

\subsection{Proof of Theorem~\ref{t:DeWitt-random}}
Start by generating a random \ts $T\in \ShSYT'\bigl(\vr_k - \tau^{\ell,m}\bigr)$ \ts
using either~\cite{Fis} or~\cite{Sag-GNW}.  Each algorithm involves $O(k^2)$ iterations
of $O(k)$ steps, each step involving moving a single square, giving the total $O(k^3)$ cost.
Use the algorithm above to compute \ts $\vp(T) \in \SYT\bigl(\delta_k/\rho^{\ell,m}\bigl)$.
The cost of the Worley--Sagan insertion is equal to that of RSK, thus bounded by
\ts $O(k^3\log k)$ \ts in this case~\cite{PV2}.  This implies the total bound as
in the theorem. \ \hfill $\sq$


\bigskip

\section{\texorpdfstring{$K$}{K}-theoretic extensions}\label{s:K}

\subsection{\texorpdfstring{$K$}{K}-theoretic objects}
The objects and maps introduced in \Cref{s:defns} have $K$-theoretic analogues.
We give compact descriptions of these objects, referring the reader to our references for concrete examples.

For $\lambda$ a partition, a \defn{set-valued standard Young tableau} of shape $\lambda$ and \defn{size} $n$ is a function $T:D_\lambda \to 2^{[n]}$, so that \ts $T(i,j)\ne\varnothing$,
\ts $\max T(i,j) < \min T(i{+}1,j), \min T(i,j{+}1)$ \ts and \ts $\sqcup_{(i,j) \in D_\lambda} T(i,j) = [n]$.
Note the latter condition requires the entries of $T$ to be disjoint.
Let $\Set(\lambda)$ be the set-valued tableaux of shape $\lambda$ and \ts $\Set_n(\lambda)$ be the subset of tableaux whose size is $n$.

Similarly, for $\mu$ a shifted shape, one defines \defn{shifted set valued standard tableaux}, denoted by $\ShSet(\mu)$ and $\ShSet_n(\mu)$, as well as \defn{marked shifted set valued standard tableaux}, denoted by \ts $\ShSet'(\mu)$ \ts and \ts $\ShSet'_n(\mu)$.  In the latter case, when $i > j$ each value in $T(i,j)$ is marked or unmarked individually.
For $|\lambda| = n$, note that \ts $\Set(\lambda) = \SYT(\lambda)$.
Likewise, if $|\mu| = m$ then $\ShSet_m(\mu) = \ShSYT(\mu)$ and $\ShSet'(\mu) = \ShSYT^2(\mu)$.

For $T \in \Set_n(\lambda)$, the \emph{descent set} is $\Des(T) := \{i \in [n-1]: i\ \mbox{is strictly above}\ i+1\} $
as before.
Similarly, for $U \in \ShSet'_n(\mu)$, the \emph{descent set} of $U$ is defined the same as for marked shifted standard tableaux.
Let $U^\circ$ be the tableau obtained from $U$ by marking every unmarked value and unmarking every marked value in off-diagonal entries of $U$.
It is easy to see that~\Cref{l:complement} extends to the set-valued setting.

\begin{lemma}
\label{l:Kdes}
	For $\lambda$ a shifted shape and $T \in \ShSet'_n(\lambda)$,  we have:  \ts $\Des(T^\circ) = \Des(T)^c$.
\end{lemma}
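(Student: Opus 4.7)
The plan is to prove \Cref{l:Kdes} by directly extending the proof of \Cref{l:complement}, which handles the single-valued case. The essential observation is that whether $i \in \Des(T)$ for a marked shifted set-valued standard tableau depends only on the cells containing $i$ and $i+1$ together with the marking status of those two entries, not on the rest of the tableau.

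First, I would fix $i \in [n-1]$ and, using that each value in $[n]$ appears exactly once in $T$, identify the unique cells $c_i$ and $c_{i+1}$ containing $i$ and $i+1$, respectively, together with their markings (each of which is forced to be unmarked if its cell lies on the diagonal). Second, I would record that the three disjunctive clauses defining $\Des(T)$, namely the condition ``$i,(i{+}1)' \in T$'' (i.e.\ $i$ is unmarked and $i{+}1$ is marked), the condition ``$i$ is strictly above $i{+}1$'' (which applies when both are unmarked), and the condition ``$i'$ is strictly to the left of $(i{+}1)'$'' (which applies when both are marked), are expressed purely in terms of this local data. These clauses are mutually exclusive in the sense that they require different marking patterns on $\{i,i+1\}$, so at most one can contribute for a given $i$.

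Third, I would observe that the operation $T \mapsto T^\circ$ preserves cells and diagonal entries and toggles the marking of every off-diagonal entry. Consequently, the local data at $\{i,i{+}1\}$ in $T^\circ$ is obtained from that of $T$ by flipping the marking of each of $i, i{+}1$ whose cell is off-diagonal. I would then run through the four possible marking patterns of $(i,i{+}1)$ in $T$ and check in each case, using the same positional analysis as in \Cref{l:complement}, that the marking toggle swaps the truth of exactly one of the three clauses, so $i$ is a descent of $T^\circ$ if and only if it is not a descent of $T$.

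The main obstacle is the new case $c_i = c_{i+1}$, which cannot occur in the single-valued setting but is possible when $T$ is set-valued. Here the two positional clauses both fail because ``strictly above'' and ``strictly to the left'' require distinct cells, so the descent status reduces to the purely markings-based clause; one checks by hand that toggling the markings swaps membership in $\Des$ at $i$ as required. With this case handled, the result follows cell-by-cell and pair-by-pair, giving $\Des(T^\circ) = \Des(T)^c$.
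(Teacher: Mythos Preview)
Your argument has a genuine gap in the new same-cell case $c_i=c_{i+1}$. You assert that ``toggling the markings swaps membership in $\Des$ at~$i$'', but this fails whenever $i$ and $i{+}1$ carry the \emph{same} marking. If both are unmarked in a common off-diagonal cell, none of the three clauses fires: clause~1 needs $i{+}1$ marked, clause~2 needs distinct rows, clause~3 needs $i$ marked; hence $i\notin\Des(T)$. In $T^\circ$ both become marked and again no clause fires (clause~3 needs distinct columns), so $i\notin\Des(T^\circ)$ as well. The both-marked sub-case is symmetric, and when the shared cell lies on the diagonal the toggle is the identity, so the check cannot succeed there either. Thus your ``check by hand'' fails in two of the four off-diagonal marking patterns and in the diagonal sub-case; only the mixed-marking sub-cases behave as you claim.

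This is not merely cosmetic. With the paper's stated definitions, already $\lambda=(1)$, $n=2$, $T=T^\circ$ with the single diagonal cell containing $\{1,2\}$ gives $\Des(T)=\varnothing\neq\{1\}=\Des(T)^c$. The paper itself supplies no argument beyond the sentence ``it is easy to see that~\Cref{l:complement} extends to the set-valued setting'', so there is nothing substantive to compare your proof against; but a correct write-up must either (i)~supplement the descent definition for $\ShSet'_n(\lambda)$ with an explicit same-cell rule, (ii)~restrict to those tableaux (e.g.\ shifted Hecke recording tableaux) for which the problematic marking patterns are excluded, or (iii)~replace the pointwise identity $\Des(T^\circ)=\Des(T)^c$ by the weaker multiset-level symmetry that actually suffices for the application in~\Cref{c:AMAN}. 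As written, your same-cell paragraph papers over precisely the place where the single-valued argument stops transferring.
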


For $S \subset [n-1]$, recall the definition of $I_S$ from~$\S$\ref{ss:des}.
Define
\begin{equation}
	\label{eq:grothendiecks}
G_\lambda \, := \, \sum_{T \in \Set(\lambda)} \, \sum_{\bfi \in I_{\Des(T)}} \. x^\bfi \qquad \mbox{and}
\qquad GP_\mu \, := \, \sum_{T \in \ShSet'(\mu)} \, \sum_{\bfi \in \Des(T)} \. x^\bfi\..
\end{equation}
These definitions are non-standard, differing from the standard definitions by the invertible substitution of variables \ts
$x_i \mapsto \ts \frac{-x_i}{1-x_i}$ \ts and a factor of \ts $(-1)^{|\lambda|}$, which is more or less an application of the $\omega$ involution for symmetric functions; see~\cite[Thm~6.11]{PP3},~\cite[Thm~1.3]{HKPWZZ} and~\cite[Cor.~6.6]{LM}.
The tableaux \ts $T$ \ts arising from the summations of~\eqref{eq:grothendiecks} are multiset-valued semistandard tableaux,
while the standard definition is in terms of set-valued semistandard tableaux.

The $K$-theoretic analogue of Worley--Sagan insertion is called \defn{shifted Hecke insertion}, introduced in~\cite{PP2}.
Rather than define this map, we refer the reader to~\cite[$\S$2.2]{HKPWZZ} for a verbose definition,
or~\cite[$\S$5.2]{HMP3} for a pseudocode description.
An implementation of shifted Hecke insertion is available at~\cite{code}.

The \defn{$K$--Knuth relations} are the transformations:
\[
acb \leftrightarrow cab,\quad bac \leftrightarrow bca,\quad aba \leftrightarrow bab, \quad aa \leftrightarrow a \quad \mbox{with} \quad a < b < c.
\]
For a word \ts $\bfa = (a_1,a_2,\dots,a_p)$, a \defn{$K$--Knuth move} is an application of a $K$--Knuth relation, while a \defn{weak $K$--Knuth move} is a $K$--Knuth move  or the exchange of the first two entries $a_1$ and $a_2$.
Two words $\bfa$ and $\bfb$ are \defn{$K$--Knuth equivalent} if they differ by a sequence of $K$--Knuth moves and \defn{weak $K$--Knuth equivalent} if they differ by a sequence of weak $K$--Knuth moves.
These notions of equivalence are introduced in~\cite{BS} as $K$-theoretic analogues of Knuth and shifted Knuth equivalence.

For $\mu$ a shifted shape, let $\INC(\mu) \subseteq \ShSSYT(\mu)$ be the subset of $T$ whose rows are strictly increasing.

\smallskip

\begin{theorem}\label{t:shifted-Hecke}
\
\begin{enumerate}
	\item  \cite[Thm~5.19]{PP2} \, Shifted Hecke insertion is a bijection from words of length $n$ to pairs $(\PSH,\QSH)$ of tableaux where $\PSH \in \INC(\mu)$ and $\QSH \in \ShSet'_n(\mu)$ for some shifted shape $\mu$.
	\item \cite[Cor.~2.18]{HKPWZZ} \text{\rm and} \cite[Cor.~7.2]{BS} \, If $\PSH(\bfa) = \PSH(\bfb)$, then $\bfa$ and $\bfb$ are shifted $K$--Knuth equivalent.
	The converse holds when $\PSH(\bfa)$ is a minimal increasing tableau.
	\item \cite[Prop.~2.24]{HKPWZZ} \, For every word $\bfa$, we have \ts $\Des(\bfa) = \Des(\QSH(\bfa))$.
\end{enumerate}

\end{theorem}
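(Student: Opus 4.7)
The plan is to establish each of the three parts by carefully tracking the behavior of the shifted Hecke insertion algorithm, adapting the arguments for ordinary (unshifted) Hecke insertion and the Worley--Sagan analogues from \Cref{t:sw}. Each part has a corresponding citation in the statement, and the proofs in those papers follow the strategy I outline here; my role is to assemble the pieces in a form compatible with our subsequent use.

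For part (1), I would verify bijectivity by constructing the reverse insertion explicitly. At each step, inserting $a_i$ into $\PSH$ either produces a new cell (which gets labeled $i$ or $i'$ in $\QSH$ depending on whether the terminating bump was a row or column bump) or absorbs into the existing shape without changing it (in which case $\QSH$ must still record something, hence the use of marked set-valued entries). Given $(\PSH,\QSH)$, one reads off the largest label of $\QSH$, locates its cell (and the marking if present), and unbumps along the reverse of the canonical bumping path. The key technical check is that Hecke insertion never produces an inconsistent multi-labeling of a cell, so the reverse procedure is unambiguous; this is essentially the content of \cite[Thm~5.19]{PP2}.

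For part (2), the forward direction is a case analysis over the four $K$--Knuth relations $acb\leftrightarrow cab$, $bac\leftrightarrow bca$, $aba\leftrightarrow bab$ and $aa\leftrightarrow a$ with $a<b<c$: one checks that applying any such relation to a consecutive triple (or duplicate) of $\bfa$ leaves $\PSH(\bfa)$ invariant, which is the shifted analogue of the arguments in \cite{BS} and \cite{HKPWZZ}. The converse, restricted to $\PSH(\bfa)$ being a minimal increasing tableau, follows because in this case the shifted Hecke insertion reduces to Worley--Sagan insertion (no absorption occurs), and so the $K$--Knuth class collapses to a shifted Knuth class, for which the converse is \Cref{t:sw}~(2).

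For part (3), the descent identity $\Des(\bfa)=\Des(\QSH(\bfa))$ follows by tracking what happens when $a_{i+1}$ is inserted after $a_i$. If $a_i<a_{i+1}$, then the insertion path of $a_{i+1}$ lies weakly to the right of the path of $a_i$ (or is absorbed), which places the cell recording $i+1$ weakly southeast of the cell recording $i$; this gives $i\notin\Des(\QSH)$. If $a_i\geq a_{i+1}$, the insertion path of $a_{i+1}$ starts strictly to the left and forces the recording cell of $i+1$ to lie strictly above that of $i$, giving a descent. The main obstacle throughout is handling the absorption steps and the column-bumping behavior triggered when a bump from the diagonal converts to a marked entry; once these cases are correctly organized, everything reduces to the unshifted arguments in \cite{HKPWZZ}.
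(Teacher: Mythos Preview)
The paper does not prove this theorem at all; it is stated as a collection of external results with citations to \cite{PP2}, \cite{HKPWZZ}, and \cite{BS}, and the subsequent remark explicitly contrasts part~(2) with \Cref{t:sw}~(2). So there is nothing in the paper to compare your sketch against beyond checking whether it reproduces the cited arguments correctly.

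Your sketch of part~(2) contains a genuine error: you have the two directions confused, and your argument for the converse is wrong. The forward direction as stated is ``same $\PSH$ implies weak $K$--Knuth equivalent''; this is \emph{not} proved by a case analysis showing that $K$--Knuth moves preserve $\PSH$. That case analysis would attempt to prove the converse, and in fact the converse is \emph{false} in general---weak $K$--Knuth moves do not always preserve $\PSH$, which is precisely why the converse requires the extra hypothesis that $\PSH(\bfa)$ is minimal increasing. The correct argument for the forward direction shows that any word is weak $K$--Knuth equivalent to the row reading word of its insertion tableau. For the converse, your claim that ``shifted Hecke insertion reduces to Worley--Sagan insertion (no absorption occurs)'' is also incorrect: the hypothesis is on the \emph{tableau} $\PSH(\bfa)$, not on the word $\bfa$, and a word with repeated letters (hence absorption) can still insert to a minimal increasing tableau (e.g.\ $\bfa=(1,1)$). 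The actual proof in \cite{BS} uses the theory of unique rectification targets and $K$-theoretic jeu de taquin, showing that minimal increasing tableaux are unique rectification targets; nothing collapses to the ordinary shifted Knuth setting.

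Parts~(1) and~(3) are broadly in the right spirit, though in~(3) your case split should be $a_i\le a_{i+1}$ versus $a_i>a_{i+1}$ to match the definition of $\Des$, and the set-valued recording tableau requires additional care in the absorption case beyond what you indicate.
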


\smallskip

\begin{remark}{\rm
Note~\Cref{t:shifted-Hecke}~(2) is a weaker statement than~\Cref{t:sw}~(2).
An analogue of~\Cref{t:neg} should exist for shifted Hecke insertion, but does not appear in the literature.
There is a $K$-theoretic analogue of jeu de taquin~\cite{TY2,CTY} that we do not require.
At present, there is no $K$-theoretic analogue of mixed shifted insertion.
}\end{remark}

\smallskip

\nin
\textbf{Hecke expressions.}
The \defn{$0$-Hecke monoid} $(S_n,\circ)$ of the symmetric group replaces the relation $s_i^2 = 1$ with the relation $s_i \circ s_i = s_i$.
For $w$ a permutation, we say $\bfa = (a_1,\dots,a_p)$ is a \defn{$0$-Hecke expression} for $w$ if $w = s_{a_1} \circ \dots \circ s_{a_p}$.
Let $\mh(w)$ be the set of $0$-Hecke expressions for $w$ and $\mh_n(w)$ be the subset of expressions of length $n$.
For $p = \ell(w)$, note $\mh_p(w) = \Red(w)$.

\smallskip

\begin{theorem}[{\cite[Prop.~14]{MPPS}, see also~\cite[\S 3]{Yun}}]
\label{t:res}
	For each fully commutative permutation $w$, there is a bijection $\res:\Set_n(\sigma(w)) \to \mh_n(w)$ so that
	\[
	\Des(\res(T)) = \Des(T)^r.
	\]
\end{theorem}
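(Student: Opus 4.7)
The plan is to extend the map $\FC$ of Theorem~\ref{t:fc} to set-valued tableaux by encoding repeated visits to a cell as repeated occurrences of the corresponding simple reflection in a Hecke expression, absorbed via the identity $s_a \circ s_a = s_a$ in the $0$-Hecke monoid. Each cell $c$ of $\sigma(w)$ is canonically labeled by a simple reflection through the heap structure of the fully commutative permutation $w$. Given $T \in \Set_n(\sigma(w))$, I would let $b_i$ denote the cell containing the entry $i$, obtaining a sequence $(b_1,\ldots,b_n)$ whose first-visit subsequence forms a linear extension $L$ of $\sigma(w)$ with $\FC(L) = (a_1,\ldots,a_p) \in \Red(w)$. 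The map $\res$ would interleave the letters $a_k$ according to the pattern in which $[n]$ fills the cells of $\sigma(w)$, with any reversal or dualization needed to match the descent conventions inherited from Theorem~\ref{t:fc}.

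The first step is to verify that $\res(T) \in \mh_n(w)$. The set-valued constraints force that any two occurrences of a cell $c$ in the reading sequence are separated only by cells $c'$ incomparable to $c$ in the poset $\sigma(w)$: if $c < c'$ then $\max T(c) < \min T(c')$ precludes a later visit to $c$, and symmetrically for $c' < c$. In the heap of a fully commutative permutation, two incomparable cells necessarily carry simple-reflection labels $s_i, s_j$ with $|i-j| \geq 2$ (cells sharing a label form a chain, while cells with adjacent labels are forced to be comparable because $s_i$ and $s_{i\pm 1}$ do not commute), so the corresponding generators commute in the $0$-Hecke monoid. Hence every repeated letter in $\res(T)$ can be commuted to sit adjacent to its partner and absorbed via $s_a \circ s_a = s_a$, which shows $\res(T)$ represents the same element as $\FC(L)$, namely $w$.

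The descent identity $\Des(\res(T)) = \Des(T)^r$ then follows by case analysis: entries $i, i+1$ in the same cell produce no descent on either side, while the remaining positions reduce to $\Des(\FC(L)) = \Des(L)^r$ from Theorem~\ref{t:fc}. Bijectivity would be established via an explicit inverse: given $\bfc \in \mh_n(w)$, use commutations (which are unambiguous for fully commutative $w$ by connectedness of $\Red(w)$ under commutation relations) to bring repeated letters adjacent, absorb them, and recover some $\bfc^{\mathrm{red}} \in \Red(w)$; then apply $\FC^{-1}$ to obtain $L$ and record the positions at which letters were inserted to reconstruct $T$. The principal obstacle is the careful descent bookkeeping under the reversal convention, and in particular confirming that the natural interleaving rule produces exactly $\Des(T)^r$ rather than some related transformation; this is where a reversal or dualization in the construction may be required, following the detailed treatments in \cite{MPPS,Yun}.
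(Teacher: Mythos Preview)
Your approach matches the paper's. The paper does not give a self-contained proof; it cites \cite{MPPS,Yun} and then describes only the inverse map $\res^{-1}$: given $\bfa \in \mh_n(w)$, place index $i$ in the cell determined by $\Phi$ when $s_{a_1}\circ\cdots\circ s_{a_i}\neq s_{a_1}\circ\cdots\circ s_{a_{i-1}}$, and otherwise in the same cell as the most recent $h<i$ with $a_h=a_i$. Your forward description (read off the heap label of the cell containing each entry $i$) is exactly the inverse of this, and your key observation---that two visits to the same cell are separated only by visits to incomparable cells, which in a heap necessarily carry commuting generators---is precisely what justifies the absorption $s_a\circ s_a=s_a$ and hence membership in $\mh_n(w)$. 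The only point where you are more tentative than necessary is the descent claim: once the forward map is fixed as ``$i$th letter equals label of the cell containing $i$,'' the identity $\Des(\res(T))=\Des(T)^r$ follows from the same poset reversal that gives $\Des(\Phi(L))=\Des(L)^r$ in Theorem~\ref{t:fc}, since equal consecutive letters (entries $i,i{+}1$ in the same cell) contribute no descent on either side and all other positions reduce to the standard case.
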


\smallskip

We should mention that the descent set relationship is not stated explicitly in~\cite{MPPS},
but follows immediately from their proof.
The construction of $\res$ parallels that of~$\Phi$.
Let $w$ be fully commutative and let $\bfa = (a_1,\dots,a_p) \in \mh(w)$.  For each \ts $i \in [p]$,
s.t.
\[
s_{a_1} \circ \dots  \circ s_{a_{i-1}} \. \neq \. s_{a_1} \circ \dots \circ s_{a_i}\.,
\]
place $i$ in the same entry as would be done in~$\Phi$.
Similarly, for each \ts $i \in [p]$, s.t.
\[
s_{a_1} \circ \dots  \circ s_{a_{i-1}} \. = \. s_{a_1} \circ \dots \circ s_{a_i}\.,
\]
there exists maximal $h < i$ with $a_h = a_i$; place $i$ in the same cell as~$h$ in this case.

\smallskip

Finally, there are $K$-theoretic analogues of~\Cref{p:grass} and~\Cref{c:FC}.

\smallskip

\begin{proposition}
	\label{p:Kgrass}
	For $w$ Grassmannian, the set \ts $\mh(w)$ \ts is a single $K$--Knuth class.
	In particular, the set \ts $\mh(w^{\delta_n})$ \ts a single $K$--Knuth class.
\end{proposition}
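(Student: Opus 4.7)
The second assertion follows immediately from the first, since $w^{\delta_n}$ is Grassmannian (its unique descent is at position $n$). I therefore focus on the general Grassmannian case. The plan is to mimic the argument sketched after \Cref{p:grass}, using the map $\res:\Set(\lambda)\to\mh(w)$ of \Cref{t:res} (with $\lambda:=\sigma(w)$ a straight shape) in place of $\FC$, and set-valued tableaux in place of standard tableaux.

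I would argue in two steps that $\mh(w)$ is a single $K$--Knuth class. \textbf{Step 1:} Any $\bfa\in\mh(w)$ with $|\bfa|>\ell(w)$ is $K$--Knuth equivalent to a strictly shorter element of $\mh(w)$. \textbf{Step 2:} Any two elements of $\Red(w)=\mh_{\ell(w)}(w)$ are $K$--Knuth equivalent. Step~2 is immediate from \Cref{p:grass}, since every Knuth move is already a $K$--Knuth move; iterating Step~1 brings every element of $\mh(w)$ down to $\Red(w)$, and transitivity then yields the claim.

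For Step~1, observe that since $\bfa$ is not reduced there is a smallest position $i$ with $s_{a_1}\circ\cdots\circ s_{a_i}=s_{a_1}\circ\cdots\circ s_{a_{i-1}}$; on the tableau side this manifests in $\res^{-1}(\bfa)\in\Set(\lambda)$ as a cell carrying more than one entry. Using the $K$--Knuth commutations $acb\leftrightarrow cab$ and $bac\leftrightarrow bca$ together with the braid $aba\leftrightarrow bab$, I would rearrange $\bfa$ so as to bring two equal letters into adjacent position, and then apply $aa\leftrightarrow a$ to strictly shorten $\bfa$ while remaining inside $\mh(w)$.

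The main obstacle lies precisely in Step~1: the $K$--Knuth commutations are strictly weaker than unrestricted simple-transposition commutations because of the strict-inequality constraints on the three letters involved, so one cannot directly appeal to Matsumoto--Tits to move a repeated entry next to its partner. I would handle this by induction on $|\bfa|-\ell(w)$, using the full commutativity of $w$ to control which letters can occur between the two copies of the offending entry, and by exploiting the rigid column-and-row structure of set-valued tableaux on the straight shape $\lambda$ via $\res^{-1}$. This is in the spirit of the cascade-style moves underlying the proof of \Cref{t:shifted-Hecke}~(2) in~\cite{HKPWZZ,BS}; the key simplification here is that $\lambda$ is straight (rather than skew), so the local analysis required to produce the adjacent repeat reduces to a bounded case check at the cell where the set-valued entry occurs.
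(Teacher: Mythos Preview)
The paper gives no proof here beyond the one-line remark that a combinatorial proof is implicit in~\cite{BS}, so there is no detailed argument to compare against; I can only assess whether your outline is sound and how it relates to what is in~\cite{BS}.

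Your Step~2 contains a small but real slip: not every Knuth move is a $K$--Knuth move, since the Knuth relations allow $a\le b<c$ and $a<b\le c$ while their $K$--Knuth counterparts require all three letters distinct. What saves the step is full commutativity. In a reduced word for a fully commutative $w$, no three consecutive letters can contain a repeat: a pattern $a\,a$ contradicts reducedness, and a pattern $a\,c\,a$ with $c\ne a$ forces $|a-c|=1$, hence a braid, which is excluded. Thus every Knuth move actually used to connect elements of $\Red(w)$ via \Cref{p:grass} already has $a<b<c$ and is therefore a $K$--Knuth move. You should make this argument explicit rather than assert the (false) blanket inclusion.

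Step~1 is indeed where the content lies, and you are right that it is nontrivial: the commutation $ij\leftrightarrow ji$ with $|i-j|\ge 2$ is \emph{not} a $K$--Knuth move, so one cannot simply quote Matsumoto--Tits in the $0$--Hecke monoid. Your proposed induction --- manufacture an adjacent repeat via $K$--Knuth moves and then contract, exploiting that $\sigma(w)$ is straight --- is a reasonable strategy, but as you yourself note it ultimately rests on the same local analysis that underlies the unique-rectification-target results of~\cite{BS} for minimal increasing tableaux of straight shape. In that sense your Step~1 does not circumvent~\cite{BS}; it reorganises it. Since the paper itself merely points to~\cite{BS}, your sketch is at least as detailed as what the paper provides; the honest summary is that both routes require the~\cite{BS} machinery (or an equivalent cascade argument) at exactly this point, and neither you nor the paper supplies it in full.
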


\smallskip

A combinatorial proof of the proposition is implicit in~\cite{BS}.

\smallskip

\subsection{\texorpdfstring{$K$}{K}-theoretic results.}
In this section, we explain how to extend~\Cref{t:DeWitt-SYT} to set-valued tableaux.
Along the way, we prove $K$-theoretic extensions of~\Cref{t:skew,t:skew-AS,t:dewitt}.
Our construction is essentially the same as $\vp$ with the necessary $K$-theoretic substitutions:
\[
\overline{\vp} \ :=\quad
T \, \xrightarrow{{\res}} \, \bfa \, \xrightarrow{{\QSH}} \, \QSH(\bfa).
\]
The proofs from~\Cref{s:main} will extend almost verbatim.
By~\Cref{t:res}, and \Cref{t:shifted-Hecke}~(3), we see that
\ts $\overline{\vp}$ \ts reverses descent sets.

To begin, we extend \Cref{p:staircase} and \Cref{t:skew,t:skew-AS} to the $K$-theoretic setting.

\begin{proposition}
\label{p:Kstaircase}
The map \. $\overline{\vp}$ \. is bijection: \. $\Set_m(\delta_n) \ts \to \ts \ShSet_m'(\vr_n)$ that reverses descent sets.
\end{proposition}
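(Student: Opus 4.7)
The plan is to replicate the proof of \Cref{p:staircase} almost verbatim, substituting each tool with its $K$-theoretic analogue: $\FC$ becomes $\res$, Worley--Sagan insertion becomes shifted Hecke insertion, and reduced words become $0$-Hecke expressions. Descent-set reversal is immediate from $\Des(\res(T)) = \Des(T)^r$ in \Cref{t:res} and $\Des(\QSH(\bfa)) = \Des(\bfa)$ in \Cref{t:shifted-Hecke}(3). Since $\res \colon \Set_m(\delta_n) \to \mh_m(w^{\delta_n})$ is already a bijection by \Cref{t:res}, it remains to show that $\QSH$ restricts to a bijection $\mh_m(w^{\delta_n}) \to \ShSet'_m(\vr_n)$.

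To do this I would first prove $K$-theoretic analogues of \Cref{p:shK} and \Cref{l:PSW} in sequence. For the analogue of \Cref{p:shK} --- that $\mh(w^{\delta_n})$ is contained in a single weak $K$-Knuth class --- start from \Cref{p:Kgrass}, which says it is already a single $K$-Knuth class, and upgrade by checking that exchanging the first two entries of any $\bfa \in \mh(w^{\delta_n})$ remains in $\mh(w^{\delta_n})$. As in \Cref{p:shK}, the combinatorics of $\sigma(w^{\delta_n}) = \delta_n$ should force the first two entries of every $0$-Hecke expression for $w^{\delta_n}$ to be either equal or of the same parity, so that the exchange is trivial or a commutation in the $0$-Hecke monoid. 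For the analogue of \Cref{l:PSW}, I would compute $\PSH$ on the explicit word $(1,3,\dots,2n{-}1,\, 2,4,\dots,2n{-}2,\, \dots) \in \Red(w^{\delta_n}) \subseteq \mh(w^{\delta_n})$; since this is a reduced word, the computation is identical to the one in \Cref{l:PSW} and yields $M^{\vr_n}$. Because $M^{\vr_n}$ is a minimal increasing tableau, the converse direction of \Cref{t:shifted-Hecke}(2) then propagates this identity across the whole enclosing weak $K$-Knuth class, hence across all of $\mh(w^{\delta_n})$.

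With both lemmas in hand, \Cref{t:shifted-Hecke}(1) supplies injectivity and well-definedness of $\QSH$ on $\mh_m(w^{\delta_n})$, with image in $\ShSet'_m(\vr_n)$. For surjectivity, given $U \in \ShSet'_m(\vr_n)$, apply inverse shifted Hecke insertion to the pair $(M^{\vr_n}, U)$ to produce a word $\bfb$ of length $m$ with $\PSH(\bfb) = M^{\vr_n}$. By \Cref{t:shifted-Hecke}(2), $\bfb$ is weak $K$-Knuth equivalent to the explicit word above, hence lies in $\mh_m(w^{\delta_n})$ by the weak $K$-Knuth argument of the first step, as required.

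The main obstacle is the weak $K$-Knuth upgrade in the first lemma. Unlike ordinary $K$-Knuth moves, the first-two exchange does not preserve the $0$-Hecke product in general --- we have $s_i \circ s_{i+1} \ne s_{i+1} \circ s_i$ --- so the parity statement must be verified for every $0$-Hecke expression for $w^{\delta_n}$, not just the reduced ones, since repetitions could a priori allow an expression beginning with two consecutive integers to arise. Ruling this out requires using the Grassmannian structure of $w^{\delta_n}$ together with the definition of $\res$; once this is done, the remaining arguments are essentially formal consequences of the proofs in \Cref{ss:bijection}.
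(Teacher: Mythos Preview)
Your proposal is correct and follows essentially the same route as the paper's proof: both use \Cref{p:Kgrass} to get a single $K$-Knuth class, upgrade to a single weak $K$-Knuth class via the first-two-swap, verify $\PSH(\bfa_0)=M^{\vr_n}$ on one explicit reduced word, and then invoke \Cref{t:shifted-Hecke}(1)--(2). The paper dispatches the obstacle you flag in one line by asserting that the first two letters of every $\bfa\in\mh(w^{\delta_n})$ are odd; this is exactly the parity claim you anticipated, and it holds for all $0$-Hecke expressions (not just reduced ones) because via $\res^{-1}$ the letters $a_1,a_2$ record the cells containing the two largest entries of a set-valued tableau of straight shape $\delta_n$, and both such cells are forced to lie on the anti-diagonal, where the heap labels are odd. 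One small sharpening: your first step in fact shows $\mh(w^{\delta_n})$ is \emph{closed} under the first-two swap, so it \emph{equals} a single weak $K$-Knuth class rather than merely being contained in one --- this equality is what makes your surjectivity argument go through.
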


\begin{proof}
	By~\Cref{p:Kgrass}, the set \ts $\mh(w^{\delta_n})$ is a single $K$--Knuth equivalence class.
	Moreover, the first two entries in each \ts $\bfa \in \mh(w^{\delta_n})$ \ts are odd, so it is a single weak $K$--Knuth equivalence class.
	To see that \ts $\overline{\vp}$ \ts is a bijection, apply~\Cref{t:shifted-Hecke}~(1) and~(2).
Finally, observe that \ts $\PSH(\bfa)$ \ts is a minimal increasing tableau for some $\bfa \in \mh(w^{\delta_n})$
(this was already shown in the proof of~\Cref{p:staircase}).
\end{proof}

\smallskip

Denote by \ts$\overline{\Lambda}$ \ts the ring generated by \ts $GP_\nu$'s, see~\cite{IN}.

\smallskip

\begin{theorem}[{\cite[Thm.~6.7]{LM}}]
\label{t:Groth-skew}
	Let $\mu \subseteq \delta_n$.
	Then \ts $G_{\delta_n/\mu} \in \overline{\Lambda}$.
	Moreover, we have \. $G_{\delta_n/\mu} \ts = \ts \sum_{\nu} \ts b_\nu \. GP_\nu$ \.
where each \ts $b_\nu$ \ts is a non-negative integer.
\end{theorem}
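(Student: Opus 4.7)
The plan is to mimic the proof of \Cref{t:skew} and \Cref{t:skew-AS} in the $K$-theoretic setting, using the map $\overline{\vp} = \QSH \circ \res$ introduced above in place of $\vp = \QSW \circ \FC$. By \Cref{t:res} and \Cref{t:shifted-Hecke}(3), this composition reverses descent sets.

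The central combinatorial step would be a $K$-theoretic analogue of \Cref{p:shK2}: showing that $\mh(w^{\delta_n/\mu})$ is a union of weak $K$-Knuth equivalence classes. For the commutation-type moves $acb \leftrightarrow cab$ and $bac \leftrightarrow bca$ (where $a < b < c$ forces $|a-c| \geq 2$) and the idempotent move $aa \leftrightarrow a$, closure is immediate from the 0-Hecke identities $s_a \circ s_c = s_c \circ s_a$ and $s_a \circ s_a = s_a$. The swap of the first two entries would follow as in \Cref{p:shK}, since the minimal cells of the heap of $\delta_n/\mu$ correspond to pairwise commuting simple transpositions. Given this closure, \Cref{t:shifted-Hecke}(1)--(2) would imply that $\QSH$ partitions $\mh(w^{\delta_n/\mu})$ into fibers, each bijecting with all of $\ShSet'(\nu)$ for some shifted shape $\nu$.

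Composing with $\res$ would then produce a descent-set-reversing bijection
\[
\Set(\delta_n/\mu) \ \longrightarrow \ \bigsqcup_{\nu} b_\nu \cdot \ShSet'(\nu),
\]
where $b_\nu \in \mathbb{N}$ counts the number of $P \in \INC(\nu)$ arising as $\PSH(\bfa)$ for some $\bfa \in \mh(w^{\delta_n/\mu})$. A $K$-theoretic analogue of \Cref{c:fundamental} for the normalization in~\eqref{eq:grothendiecks}, combined with a $K$-theoretic evacuation analogue of \Cref{l:rev-comp} (namely $G_\lambda = \sum_{T \in \Set(\lambda)} \sum_{\bfi \in I_{\Des(T)^r}} x^\bfi$), would then convert this bijection into the identity $G_{\delta_n/\mu} = \sum_\nu b_\nu GP_\nu$, establishing both membership in $\overline{\Lambda}$ and the non-negativity of the coefficients.

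The main obstacle is the treatment of the $K$-Knuth braid-type relation $aba \leftrightarrow bab$, which does not preserve the 0-Hecke product when $|a-b| > 1$. Resolving this requires a heap-theoretic argument exploiting the full commutativity of $w^{\delta_n/\mu}$ to show that this move either applies vacuously or stays within $\mh(w^{\delta_n/\mu})$. A secondary technical issue is the converse direction of \Cref{t:shifted-Hecke}(2), which in general requires $\PSH(\bfa)$ to be a minimal increasing tableau; one must verify that the relevant $\PSH$-fibers meeting $\mh(w^{\delta_n/\mu})$ lie wholly inside it, so that each fiber maps surjectively onto all of $\ShSet'(\nu)$.
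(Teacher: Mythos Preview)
Your approach is essentially the paper's, but both ``obstacles'' you flag are illusory.

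The relation $aba \leftrightarrow bab$ \emph{does} preserve the $0$-Hecke product for every $a<b$: when $|a-b|=1$ this is the braid relation in the $0$-Hecke monoid, and when $|a-b|>1$ commutativity plus idempotence give
\[
s_a \circ s_b \circ s_a \;=\; s_a \circ s_a \circ s_b \;=\; s_a \circ s_b \;=\; s_b \circ s_a \;=\; s_b \circ s_b \circ s_a \;=\; s_b \circ s_a \circ s_b.
\]
So $\mh(w)$ is closed under all $K$-Knuth moves for \emph{any} permutation~$w$; no heap-theoretic argument or use of full commutativity is required. The paper simply asserts this closure and moves on.

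Your secondary concern is also misplaced. The only direction of \Cref{t:shifted-Hecke}(2) needed is the forward one: $\PSH(\bfa)=\PSH(\bfb)$ implies $\bfa$ and $\bfb$ are weak $K$-Knuth equivalent. This says each $\PSH$-fiber lies in a single weak $K$-Knuth class; since $\mh(w^{\delta_n/\mu})$ is a union of such classes, it is automatically a union of full $\PSH$-fibers. By \Cref{t:shifted-Hecke}(1) each such fiber maps bijectively under $\QSH$ onto all of $\ShSet'(\nu)$ for the appropriate~$\nu$, yielding the $GP$-expansion directly. The converse direction (and hence the minimal-increasing-tableau hypothesis) is never invoked.

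With these non-issues removed, your outline collapses to exactly the paper's two-paragraph proof.
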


\begin{proof}
By~\Cref{t:res}, we have:
\[
G_{\delta_n/\mu} \, = \, \sum_{n \geq 1} \. \sum_{\bfa \in \mh_n(w^{\delta_n/\mu})}
\. \sum_{\bfi \in I_{(\Des(\bfa)^c)^r}} \. x^\bfi \..
\]
Since \ts $\mh\bigl(w^{\delta_n/\mu}\bigr)$ \ts is closed under $K$--Knuth moves, it is a finite union of $K$--Knuth classes.
Additionally, the first two entries of \ts $\bfa \in \mh\bigl(w^{\delta_n/\mu}\bigr)$ \ts must both be odd.
This implies \ts $\mh\bigl(w^{\delta_n/\mu}\bigr)$ \ts is a union of (possibly fewer) weak $K$--Knuth classes.
Therefore by~\Cref{t:shifted-Hecke}~(2), we see $G_{\delta_n/\mu}$ is a sum of finitely many
\ts $GP_\mu$'s, and the result follows.
\end{proof}

\smallskip

\begin{corollary}[{\cite[Thm~1.3]{AMAN}}]
\label{c:AMAN}
	For all \ts $\mu \subseteq \delta_n$\., we have \ts $G_{\delta_n/\mu} \ts = \ts G_{\delta_n/\mu'}$\..
\end{corollary}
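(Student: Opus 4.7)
The plan is to mirror the proof of \Cref{t:skew} with $K$-theoretic substitutions: replace $(\Phi, \QSW)$ by their analogues $(\res, \QSH)$, and invoke \Cref{t:res} in place of \Cref{t:fc} together with \Cref{t:shifted-Hecke} in place of \Cref{t:sw}. By \Cref{t:Groth-skew} applied to both $\mu$ and $\mu'$, we may write $G_{\delta_n/\mu} = \sum_\nu b_\nu^\mu\, GP_\nu$ and $G_{\delta_n/\mu'} = \sum_\nu b_\nu^{\mu'}\, GP_\nu$ as non-negative integer combinations of $GP$-functions. Tracing through the proof of \Cref{t:Groth-skew}, the coefficient $b_\nu^\mu$ counts the weak $K$--Knuth classes inside $\mh(w^{\delta_n/\mu})$ whose common $\PSH$-shape equals $\nu$, and similarly for $\mu'$; so it suffices to show $b_\nu^\mu = b_\nu^{\mu'}$ for every strict $\nu$.

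First I would construct the candidate bijection between the Hecke expressions. The permutation $w^{\delta_n/\mu'}$ is obtained from $w^{\delta_n/\mu}$ by the involution $s_i \mapsto s_{n-i}$: this can be checked directly from the explicit recipe for $w^{\delta_n/\mu}$ stated just before \Cref{l:skewFC}, since transposing $\delta_n/\mu$ corresponds to relabeling the generators in reverse. Consequently, the alphabet-reversal map $\omega : (a_1, \ldots, a_p) \mapsto (n-a_1, \ldots, n-a_p)$ gives a bijection $\mh(w^{\delta_n/\mu}) \to \mh(w^{\delta_n/\mu'})$. Next I would verify that $\omega$ preserves weak $K$--Knuth equivalence: each of the relations $acb \leftrightarrow cab$, $bac \leftrightarrow bca$, $aba \leftrightarrow bab$, $aa \leftrightarrow a$, and the first-two-letters exchange is manifestly symmetric under reversing the order on letters, so $\omega$ sends relations to relations. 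This induces a bijection between weak $K$--Knuth classes on the two sides.

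The hard part will be to show that $\omega$ preserves the common $\PSH$-shape of each class, which would then immediately yield $b_\nu^\mu = b_\nu^{\mu'}$. In the classical setting, this is a clean consequence of \Cref{t:neg} together with the shift invariance of $\QSW$: one has $\QSW(\omega(\bfa)) = \QSW(\bfa)^\circ$, whose shape agrees with that of $\QSW(\bfa)$. As explicitly acknowledged in the remark following \Cref{t:shifted-Hecke}, the analogue $\QSH(\omega(\bfa)) = \QSH(\bfa)^\circ$ is absent from the literature, and establishing it is the principal obstacle. I expect the shape-equality statement (which is weaker than the full $(\cdot)^\circ$ identity and is all we actually need) to be provable by induction on word length, tracking how shifted Hecke bumping interacts with $\omega$ column-by-column and using that the weak $K$--Knuth class of the output depends only on that of the input; once this ingredient is in place, $b_\nu^\mu = b_\nu^{\mu'}$ follows and hence $G_{\delta_n/\mu} = G_{\delta_n/\mu'}$.
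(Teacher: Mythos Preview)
Your proposal contains a genuine gap that you yourself identify: the step asserting that $\omega$ preserves the $\PSH$-shape of each weak $K$--Knuth class. You reduce this to a shifted-Hecke analogue of \Cref{t:neg} (or the weaker shape-only version), and offer only a vague induction ``tracking how shifted Hecke bumping interacts with $\omega$ column-by-column''. The paper explicitly flags this analogue as missing from the literature in the Remark following \Cref{t:shifted-Hecke}; shifted Hecke insertion mixes row and column bumping with diagonal behavior and non-bumping absorptions, and there is no reason to expect a short inductive argument. Without this lemma your coefficient comparison $b_\nu^\mu = b_\nu^{\mu'}$ is unproven. (A minor secondary issue: in the $K$-setting $\PSH$-fibers need not coincide with weak $K$--Knuth classes by \Cref{t:shifted-Hecke}(2), so your description of $b_\nu^\mu$ as counting weak $K$--Knuth classes of a given shape is not quite right either; it counts $\PSH$-fibers.)

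The paper's proof sidesteps this entirely by working at the level of descent multisets rather than matching classes shape-by-shape. Once \Cref{t:Groth-skew} is known, $G_{\delta_n/\mu}$ is a non-negative sum of $GP_\nu$'s; by \Cref{l:Kdes} the involution $\circ$ on $\ShSet'(\nu)$ complements descent sets, so the multiset $\{\Des(T): T \in \ShSet'(\nu)\}$ is closed under complementation for every $\nu$. Pulling back through $\QSH$ (via \Cref{t:shifted-Hecke}(1),(3)), the multiset $\{\Des(\bfa): \bfa \in \mh(w^{\delta_n/\mu})\}$ is therefore closed under complementation as well. Since your map $\omega$ is a descent-complementing bijection $\mh(w^{\delta_n/\mu}) \to \mh(w^{\delta_n/\mu'})$, the two descent multisets agree, and the equality of Grothendieck polynomials follows directly from their quasisymmetric expansions. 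The point you are missing is that the $\circ$ involution lives on the \emph{target} side and is already available once \Cref{t:Groth-skew} is proved; you never need to know how $\omega$ interacts with $\PSH$.
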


\begin{proof}
	By~\Cref{t:Groth-skew}, both \ts $G_{\delta_n/\mu}$ \ts and \ts
$G_{\delta_n/\mu'}$ \ts can be expressed as generating functions over
marked shifted semi-standard set-valued tableaux, which have the descent-set complementing involution $\circ$.
	Therefore, the result follows by observing that \ts
$\bfa = (a_1,\dots,a_n) \ts \mapsto \ts (n-a_1,\dots,n-a_n)$ \ts
is descent-set complementing bijection \. $\mh\bigl(w^{\delta_n/\mu}\bigr) \to \mh\bigl(w^{\delta_n/\mu'}\bigr)$.
\end{proof}

Next, we give an algebraic proof for a $K$-theoretic analogue of \Cref{t:dewitt}, which we will use to prove the $K$-theoretic analogue of \Cref{t:DeWitt-SYT}.

\begin{theorem}
	\label{t:K-dewitt}
	For $\ell + m < n$, $G_{\delta_n/\rho_{\ell,m}} = GP_{\vr_n - \tau_{\ell,m}}$.
\end{theorem}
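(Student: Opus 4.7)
The plan is to combine the $GP$-positivity from \Cref{t:Groth-skew} with the non-$K$-theoretic identity of \Cref{t:dewitt}, and then pin down the expansion by a degree-by-degree analysis. By \Cref{t:Groth-skew}, I would begin with
\[
G_{\delta_n/\rho_{\ell,m}} \. = \. \sum_\nu b_\nu\, GP_\nu, \qquad b_\nu \in \N,
\]
with only finitely many $b_\nu$ nonzero. Set $N := |\delta_n/\rho^{\ell,m}|$. Restricting to set-valued tableaux whose cells are all singletons shows that the lowest-degree homogeneous component of $G_\lambda$ is $s_\lambda$, and similarly the lowest-degree part of $GP_\nu$ is $P_\nu$. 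Extracting the degree-$N$ component of both sides and invoking \Cref{t:dewitt} yields
\[
P_{\vr_n \spds \tau^{\ell,m}} \. = \. s_{\delta_n/\rho^{\ell,m}} \. = \. \sum_{|\nu|=N} b_\nu\, P_\nu,
\]
so by linear independence of $\{P_\nu\}$ in $\wt{\Lambda}$, we get $b_{\vr_n \spds \tau^{\ell,m}} = 1$ and $b_\nu = 0$ for every other $\nu$ with $|\nu| = N$.

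What remains is to show $b_\nu = 0$ whenever $|\nu| > N$. I would proceed by induction on $r \geq 1$: since $(GP_\nu)_{N+r}$ vanishes whenever $|\nu| > N+r$, the degree-$(N+r)$ piece of
\[
G_{\delta_n/\rho_{\ell,m}} \. - \. GP_{\vr_n \spds \tau^{\ell,m}} \. = \! \sum_{|\nu| > N} b_\nu\, GP_\nu
\]
is a finite $P$-positive combination of $P_\nu$ with $|\nu| = N+r$, and it is enough to establish the homogeneous identity $(G_{\delta_n/\rho_{\ell,m}})_{N+r} = (GP_{\vr_n \spds \tau^{\ell,m}})_{N+r}$ at each degree. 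One way to do this is to reduce to the known straight-shape identity $G_{\delta_n} = GP_{\vr_n}$ via the coproduct on $\overline{\Lambda}$, verifying that the operator that "skews off" $\rho^{\ell,m}$ on the $G$-side corresponds to the operator that "skews off" $\tau^{\ell,m}$ on the $GP$-side. A plausible alternative is a principal specialization or a finite-variable specialization argument, using that $GP_\nu(x_1,\dots,x_k) = 0$ whenever $\ell(\nu) > k$, to peel off potential $\nu$'s one at a time.

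The hard part is precisely this second stage. \Cref{t:dewitt} only controls the lowest-degree piece, and the higher-degree corrections on the two sides have visibly different combinatorial origins --- multiset-valued skew tableaux on $\delta_n/\rho^{\ell,m}$ on the left versus marked shifted multiset-valued tableaux on $\vr_n \spds \tau^{\ell,m}$ on the right --- so bridging them cannot come from $GP$-positivity alone. The substantive algebraic input needed will most plausibly be a Cauchy-type identity for $GP$-functions, a skew Pieri-type rule, or a direct computation of the coproduct of $G_{\delta_n} = GP_{\vr_n}$ in $\overline{\Lambda}$ demonstrating that the two "skewing" operations agree.
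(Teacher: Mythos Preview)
Your first stage is fine: the lowest-degree component of the $GP$-expansion is indeed controlled by \Cref{t:dewitt}, and this correctly forces $b_{\vr_n\spds\tau^{\ell,m}}=1$ and $b_\nu=0$ for all other $\nu$ of size $N$. The paper also uses \Cref{t:dewitt} at this juncture, though for a different purpose (see below).

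The gap is exactly where you say it is, and none of the routes you sketch for the higher-degree terms closes it. The coproduct idea asks that the operator ``skew off $\rho^{\ell,m}$'' on the $G$-side coincide with ``skew off $\tau^{\ell,m}$'' on the $GP$-side; but these are adjoints with respect to \emph{different} bilinear forms (the dual bases are $g_\lambda$ and $gq_\nu$, respectively), and there is no a~priori reason the two skewing operators agree on $G_{\delta_n}=GP_{\vr_n}$ --- establishing that agreement is essentially the theorem itself. The finite-variable specialization only kills $GP_\nu$ with $\ell(\nu)>k$; it cannot separate the remaining $\nu$'s of a fixed degree, so you still face a linear-independence problem you have no handle on. In short, $GP$-positivity plus the degree-$N$ identity is not enough information to rule out higher $\nu$'s, and you have not supplied the extra algebraic input you yourself flag as necessary.

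The paper's proof proceeds along an entirely different axis. Rather than expand and compare degrees, it identifies $G_{\delta_n/\rho^{\ell,m}}$ geometrically: via results of Hamaker--Marberg--Pawlowski and Marberg--Pawlowski, this skew Grothendieck polynomial equals a single symplectic stable Grothendieck polynomial $GP^{\mathrm{Sp}}_y$ for a specific fixed-point-free involution $y$. \Cref{t:dewitt} is then invoked not to read off a leading term but to certify that $y$ is \emph{FPF-vexillary}; for such $y$, further results from \cite{MP2,HMP4} give $GP^{\mathrm{Sp}}_y=GP_\nu$ for a single $\nu$, and comparing leading terms identifies $\nu=\vr_n\spds\tau^{\ell,m}$. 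So the ``single term'' conclusion comes from the structural theory of involution Schubert classes, not from a degree-by-degree comparison. If you want a self-contained argument avoiding that machinery, you would need a genuinely new idea --- for instance, a $K$-theoretic analogue of the injection in \Cref{p:inverse} proved independently of the target identity --- rather than the coproduct or specialization heuristics.
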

\begin{proof}
	Combining~\cite[Cor~6.22]{HMP2} and~\cite[Cor.~4.6]{MP1}, we see $G_{\delta_n/\mu}$ is a single sympletic stable Grothendieck polynomial for some fixed-point-free involution $y$.
	By Theorem~\ref{t:dewitt} we see $s_{\delta_n/\rho_{\ell,m}} = P_{\vr_n - \tau_{\ell,m}}$, so $y$ is FPF-vexillary in the sense of~\cite{HMP3}.
	The result now follows from~\cite[Cor.~3.11]{MP2} and~\cite[Cor.~5.9]{HMP4},
since with each application of Corollary~3.11 only one term occurs on the RHS after cancelling like terms.
\end{proof}

\begin{theorem}
	\label{t:set-dewitt}
	The map~$\.\overline{\vp}\ts:\. \Set_m(\delta_n/\rho_{\ell,m}) \. \longrightarrow \. \ShSSet'_m(\vr_n-\tau_{\ell,m})$ \. is a bijection.
\end{theorem}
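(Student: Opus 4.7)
The plan is to mimic the proofs of \Cref{p:Kstaircase} and \Cref{t:Groth-skew}, with \Cref{t:K-dewitt} playing the role that \Cref{t:dewitt} played in \Cref{s:main}. First I would verify that $\overline{\vp}$ is well-defined and reverses descent sets: for any $T \in \Set_m(\delta_n/\rho_{\ell,m})$, \Cref{t:res} gives $\res(T) \in \mh_m(w^{\delta_n/\rho_{\ell,m}})$, and \Cref{t:shifted-Hecke}(1) places $\QSH(\res(T))$ in $\ShSet'_m(\nu)$ for some strict partition $\nu$, while \Cref{t:res} together with \Cref{t:shifted-Hecke}(3) make the composition reverse descent sets.

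Next I would identify the image. Exactly as in the proof of \Cref{t:Groth-skew}, the set $\mh(w^{\delta_n/\rho_{\ell,m}})$ is closed under $K$-Knuth moves and under swapping the first two entries (both odd in this setup), so it is a union of weak $K$-Knuth classes. By \Cref{t:shifted-Hecke}(1)-(2), on each such class whose insertion tableau $\PSH$ is minimal increasing, the map $\QSH$ bijects with $\ShSet'_m(\shape(\PSH))$. Combining \eqref{eq:grothendiecks}, \Cref{l:Kdes}, and \Cref{t:K-dewitt}, the descent-set generating function of the image coincides with that of $\ShSet'_m(\vr_n - \tau_{\ell,m})$, pinning down both its cardinality and its descent statistics.

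To conclude I would prove injectivity by establishing a $K$-theoretic analog of \Cref{l:PSW}: for every $T \in \Set_m(\delta_n/\rho_{\ell,m})$, the insertion tableau $\PSH(\res(T))$ is minimal increasing. Granting this, the converse portion of \Cref{t:shifted-Hecke}(2) applies, so distinct $T$'s in the same weak $K$-Knuth class yield distinct $\QSH(\res(T))$'s, and the cardinality match upgrades injectivity to a bijection. The image (of straight shifted shape, with a forced filling inside $\tau_{\ell,m}$) is identified with $\ShSet'_m(\vr_n - \tau_{\ell,m})$ by the same trim-the-trapezoid device used in \Cref{ex:inverse} and \Cref{p:inverse}; its extension to set-valued tableaux should be routine because the forced entries occupy the maximal positions of the tableau and so do not interact with the set-valued multiplicities elsewhere.

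The main obstacle I expect is this $K$-theoretic analog of \Cref{l:PSW}. In the classical case one exhibits one convenient reduced word in $\Red(w^{\delta_n})$ and propagates via a single Knuth class; here $\mh(w^{\delta_n/\rho_{\ell,m}})$ may split into several weak $K$-Knuth classes, so one must check the minimal-increasing property on a representative of each and verify that shifted Hecke insertion preserves it when $0$-Hecke expressions contain repeated letters arising from set-valued entries of $T$. A secondary issue is to ensure that the straight shapes arising from these different classes jointly account for the image under the fill-in identification; I would handle this by induction on the excess $m - |\delta_n/\rho_{\ell,m}|$, with the base case furnished by \Cref{t:DeWitt-SYT}.
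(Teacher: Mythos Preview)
Your broad plan---use \Cref{t:K-dewitt} for the numerics and separately argue that $\overline{\vp}$ is injective with the correct image---is sensible, but it diverges from the paper's argument and, as written, has a gap.

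The gap is in your injectivity step. Even granting that $\PSH(\res(T))$ is minimal increasing for every $T$, \Cref{t:shifted-Hecke}(2) only tells you that $\PSH$ is constant on each weak $K$--Knuth class; it does not prevent two distinct classes inside $\mh(w^{\delta_n/\rho_{\ell,m}})$ from having insertion tableaux of the \emph{same} shape, in which case $\QSH$ would hit the same recording tableau twice and $\overline{\vp}$ would fail to be injective. Your ``secondary issue'' is therefore the primary one, and the proposed induction on the excess does not resolve it. A clean repair is available: the proof of \Cref{t:Groth-skew} shows that $G_{\delta_n/\rho_{\ell,m}}=\sum_{P}GP_{\shape(P)}$, summed over the distinct $\PSH$-values arising from $\mh(w^{\delta_n/\rho_{\ell,m}})$; combining this with \Cref{t:K-dewitt} and the linear independence of the $GP_\nu$ forces a single $P$ of shape $\vr_n-\tau_{\ell,m}$, which simultaneously gives injectivity and identifies the image. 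This also renders the minimal-increasing verification unnecessary.

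The paper instead proceeds in the reverse direction, extending \Cref{p:inverse} directly. Given $T\in\ShSet'_m(\vr_n-\tau_{\ell,m})$, one fills in the trapezoid via~\eqref{eq:prime-tab} to obtain $\wt T$, pairs it with $M^{\vr_n}$, and runs inverse shifted Hecke insertion followed by $\res^{-1}$. Because the appended entries are singletons sitting in maximal positions, the first $\ell m$ uninsertion steps coincide verbatim with the Worley--Sagan uninsertions already analyzed in \Cref{p:inverse}; this lands the image inside $\Set_m(\delta_n/\rho_{\ell,m})$, giving an explicit injection in that direction. The cardinality match from \Cref{t:K-dewitt} (together with \Cref{t:DeWitt-SYT}) then forces bijectivity. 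This route never needs to control the $K$--Knuth class structure of $\mh(w^{\delta_n/\rho_{\ell,m}})$ or invoke linear independence of the $GP_\nu$, and it makes the inverse map explicit; your route, once repaired as above, would instead deduce the uniqueness of the insertion tableau as a corollary.
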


\begin{proof}
	We construct the inverse map as before.
	Fix \ts $T \in \ShSet'_m(\vr_n - \tau_{\ell,m})$.  Construct \ts $\wt{T}$ \ts
by filling entries in \ts $\vr_n/(\vr_n - \tau_{\ell,m})$ \ts as described in~\eqref{eq:prime-tab},
and let $P_1$ be the minimal increasing tableau of shape~$\vr_n$.
	One can easily check the first $\ell \cdot m$ steps to invert shifted Hecke insertion will coincide with those used in inverting Worley-Sagan insertion.
	Similarly, applying $\res^{-1}$ will then result in a tableau $\wt{T}' \in \Set(\delta_n)$ that, when restricted to $D_{\rho_{\ell,m}}$, gives the super standard tableau of that shape.
	The forward direction now follows by \Cref{t:DeWitt-SYT,t:K-dewitt}.
	\end{proof}

For geometric reasons, it is easier to work with symplectic stable Grothendieck polynomials in our proof of \Cref{t:K-dewitt}.
This is the same identification used by Lewis and Marberg in their proof of~\Cref{t:Groth-skew}.
However, we define $\overline{\vp}$ using shifted Hecke insertion rather than its symplectic analogue~\cite{Mar},
so that it is manifestly a generalization of $\vp$.

\bigskip

\section{Final remarks}\label{s:finrem}

\subsection{}  \label{ss:finrem-marv}
In~\cite{Pur}, Purbhoo constructs a bijection $\SYT(\delta_n) \to \ShSYT'(\vr_n)$
via a jeu de taquin-like algorithm called \emph{conversion}.
By~\cite[Prop.~7.1]{Hai1}, his map is equivalent to the alternate bijection
we define after \Cref{l:first-part}, which has a conversion formulation
in full generality.

\subsection{}  \label{ss:finrem-hist}
John Stembridge proved~\Cref{t:skew} in June~2004, but the proof
was never published~\cite{Ste3}.  Two proofs of~\Cref{t:skew}
were given in~\cite[Cor.~7.32]{RSS}, where the authors also attributed
this result to Stembridge.   A different algebraic proof was
was given in~\cite[Solution to Exc.~2.9.25]{GR}.  Recently, a generalization
of the theorem to stable Grothendieck polynomials is given
in~\cite[Cor.~\ref{c:AMAN}]{AMAN}. A different generalization
to \emph{Macdonald's ninth variation Schur functions} was given
in \cite{FoKi}; the proof is based on the {\em Hamel--Goulden identities}.

\subsection{}  \label{ss:finrem-det}
The approach in~\cite{KS} to the proof of~\Cref{c:dewitt}
is based on explicit computation of determinants.  This type of argument
somewhat hides the role of the staircase shape which is crucial for the
proof.  In a forthcoming paper~\cite{LMP} the authors extend the determinant
approach from \ts $\de_k/(b^a)$ \ts to all \ts $\de_k/\mu$ \ts using certain
new determinantal and $q$-determinantal identities.

\subsection{} \label{ss:finrem-crystal}
Let \ts $\mathfrak{q}(n)$ \ts be the \emph{queer Lie superalgebra}.  There is a \ts
$\mathfrak{q}(n)$-crystal structure on the set of words \ts $[n]^m$ \ts of
length~$m$ on the alphabet \ts $[n]$.  The semistandard version of \Cref{t:mixed}~(1)~\cite[Def.~1.2]{Ser} gives
\begin{align*}
& \mathrm{MS}: [n]^m \,  \to \, \bigsqcup_{\lambda \ts \vdash \ts m} \. \ShSSYT'(\lambda) \times \ShSYT(\lambda)\.,
\\
& \qquad  \text{where} \qquad w \, \to \, \bigl(P_{\mathrm{MS}}(w),Q_{\mathrm{MS}}(w)\bigr).
\end{align*}
The connected component of the crystal in which a word \ts $w \in [n]^m$ \ts
is found is specified by \ts $Q_{\mathrm{MS}}(w)$, giving a \ts
$\mathfrak{q}(n)$-crystal structure on shifted marked semistandard tableaux~\cite[Thm~3.2]{Hiro} (see also~\cite{HPS}).
Using the obvious correspondence between a partition without a staircase and
the complementary shape in a larger square, by~\cite[Prop.~23]{CK}, a \ts
$\mathfrak{q}(n)$-crystal structure on the set of semistandard skew tableaux
with maximum entry $n$ of shape staircase minus rectangle is obtained by
sending a tableau $T$ to a word \ts $w_T \in [n]^m$ \ts by scanning
rows from top to bottom, with each row read from right to left and
where a box with the entry $i$ is recorded as \ts $n+1-i$.

By uniqueness of crystals, it follows that a bijection from
skew semistandard tableaux to shifted marked semistandard tableaux
(each with entries bounded by $n$) is given by \ts $P_{\mathrm{MS}}(w_T)$,
where $w_T$ is the reading word (as above) of a skew tableau~$T$.
The bijective correspondence of~\Cref{t:DeWitt-SYT} follows by
restricting to the zero-weight space when $n=m$
(containing tableaux with standard content).

\begin{example}{\rm
Continuing with~\Cref{ex:inverse}, for
\[T\ = \, \raisebox{0.5\height}{\begin{ytableau}
	\none & \none & 1 & 3 & 11\\
	\none & \none & 2 & 6\\
	4 & 5 & 9\\
	7 & 10\\
	8
\end{ytableau}} \]
we construct the reading word \ts $w_T =(1,9,11,6,10,3,7,8,2,5,4)$, which
mixed inserts to the pair
\[\bigl(P_{\mathrm{MS}}(w_T),Q_{\mathrm{MS}}(w_T)\bigr) \, = \, \left(\raisebox{0.5\height}{\begin{ytableau}
 1 & 2 & 4 & 6' & 9'\\
 \none & 3 & 5 & 8 & 11'\\
 \none & \none & 7 & 10'
 \end{ytableau}} \ \ , \ \raisebox{0.5\height}{\begin{ytableau}
 1 & 2 & 3 & 7 & 8\\
 \none & 4 & 5 & 9 & 10\\
 \none & \none & 6 & 11
 \end{ytableau}}\right).\]  Restricting to $P_{\mathrm{MS}}(w_T)$ gives the desired bijection; it is possible to explicitly characterize $Q_{\mathrm{MS}}(w_T)$.  Note that this bijection works equally well when starting with a skew semistandard tableau instead of a skew standard tableau.
}\end{example}

\subsection{}  \label{ss:finrem-LR}
For symmetries of LR--coefficients, see~\cite{BZ,HS}. See also a bijective
proof in~\cite{PV} relating
the highly symmetric \emph{{\rm BZ}--triangles} and the (usual) LR--tableaux.
In summary, all these hidden symmetries of LR--tableaux have now been
established via a chain of bijections.
See also an unusual construction in~\cite{TY1} which trades off
effectiveness of a combinatorial interpretation for greater symmetry.
Finally, we refer to~\cite{PP1} for a brief overview of further examples
of hidden symmetry.

\subsection{}\label{ss:finrem-random}
Random generation (sampling) of combinatorial objects from the (exactly) uniform
distribution is a classical problem in both Combinatorics, see e.g.~\cite{NW},
and Theoretical Computer Science, see e.g.~\cite{JVV}.  The approach of using
determinantal formulas for uniform random generation
of planar structures was introduced by Wilson~\cite{Wil}.

For Young tableaux of staircase minus rectangle shape, our approach is
also greatly superior to the MCMC approach for the \emph{nearly uniform}
generation of linear extensions of all posets.  Indeed, the best known
general bound is $O(n^3\log n)$ time for $n$-element posets,
due to Bubley and Dyer~\cite{BD}.  In our case, we have \ts
$n=\Theta(k^2)$, giving only a \ts $O(k^6\log k)$ \ts time, which is
much weaker compared with the \ts $O(k^3\log k)$ \ts time in~\Cref{t:DeWitt-random}.
It would be interesting to see if a nearly linear MCMC algorithm can be obtain
for nearly uniform sampling from \ts $\SYT(\la/\mu)$ \ts in our case, or
(even better) for general skew shapes.

Finally, we should mention a detailed complexity analysis of the
\emph{NPS algorithm} given in~\cite{SS}; similar results likely
hold for Fischer's algorithm~\cite{Fis}.  Note that a rough \ts $O(k^3)$ \ts
suffices for our purposes as the cost of the algorithm is (marginally)
dominated by the Worley--Sagan insertion.

\subsection{}\label{ss:finrem-K}~\Cref{t:Groth-skew} does not
have the same applications as~\Cref{t:DeWitt-random}, since there
is no known probabilistic algorithm to sample from \ts
$\ShSSet'(\vr_n-\tau_{\ell,m})$ \ts uniformly at random, or
even nearly-uniformly.  In fact, we are only aware of few
incremental results for the number of certain set-valued
tableaux in some nice special cases, see e.g.~\cite{Dru,RTY}.
In particular, no determinantal formula is known for the
number of set-valued standard tableaux, cf.\ the discussion
in \cite[$\S$5]{MPY}.  It would be
interesting to show that the number of such tableaux is
\ts {\sf \#P}-complete.


\subsection{}\label{ss:finrem-complexity}
Note that our bijection proving~\eqref{e:Stem-LR} is not computable in
linear time as bijections in~\cite{PV}, nor is it easily comparable with
bijections in~\cite{PV2} since the lengths of parts are not in binary.
It would be interesting to show that in the terminology of~\cite{PV2},
this bijection is linear time equivalent to the bijection in~\cite{HS}.

\subsection{} \label{ss:finrem-Schubert}~\Cref{t:dewitt} has a geometric explanation in terms
of certain \emph{spherical orbit closures} on the type~A flag varieties (see~\cite[Thm~4.58]{HMP3}).
Here, the skew Schur functions are geometric representatives
for \emph{Schubert varieties} indexed by fully commutative permutations,
while the Schur $P$--functions represent certain \emph{involution Schubert varieties}.
The use of \ts $\Red(w^{\delta_n})$ \ts in our proof reflects the fact, shown
in~\cite[Prop.~3.30]{HMP1}, that the varieties in question have the same cohomology
representatives.

From this perspective, the desired bijection follows from applying both \ts $\FC^{-1}$
\ts and \emph{involution Coxeter--Knuth insertion}
\cite[Thm~5.17]{HMP3} to \ts $\Red(w^{\delta_n/\mu})$.
In this setting, the involution Coxeter--Knuth insertion restricts to
the Worley--Sagan insertion, recovering~$\vp$.
We use the parallel theory for fixed-point-free involution Schubert varieties in our
proof of~\Cref{t:K-dewitt} since their $K$-theory is easier to understand.

The corresponding involutions are
$\mathcal{I}$-Grassmannian in the sense of~\cite{HMP3}.
By analogy with the map~$\FC$, it is an interesting open question to
give a direct bijection between reduced involution words for
$\mathcal{I}$-Grassmannian involutions and marked shifted tableaux
of the appropriate shape.  Here, direct can mean either without using
an insertion algorithm or while using $O\bigl(|\lambda|\ts\log |\lambda|\bigr)$ operations.

\subsection{} \label{ss:URT}
Our proof of \Cref{t:K-dewitt} in fact proves a stronger statement.
If $y$ is a FPF-vexillary fixed-point-free involution in the sense of~\cite{HMP4}, we show its symplectic Grothendieck polynomial is a single $GP_\lambda$.
The analogous statement for vexillary permutations appears in \cite[Lem.~5.4]{RTY}.
As a consequence, the insertion tableau $\PSH$ associated to $y$ must be a unique rectification target in the sense of~\cite{BS}.
Identifying unique rectification targets is an interesting and challenging question that has received some attention~\cite{GMPPPRST}


\subsection{} \label{ss:finrem-sim}
The limit curves of random standard Young tableaux are of interest in
integrable probability as in some cases they can be computed exactly.
Most recently, their existence has been shown for a large class of skew
shapes~\cite{Gor,Sun}.  For both the staircase~\cite{AHRV} and the shifted staircase~\cite{LPS},
these limit curves coincide with limit curves for the square~\cite{PR} when restricted to either triangle.

We implemented our bijection for
uniformly sampling from \ts $\SYT(\de_k/b^a)$ in Sage, see the proof of Theorem~\ref{t:DeWitt-random}.  Our code is available \href{https://cocalc.com/share/b8a5580510561b4e7f0950cdbab7c6ab1e73eaac/Mixed%20Shifted%20Insertion.sagews?viewer=share}{online} on CoCalc~\cite{code}.
It would be interesting to see if there are exact formulas for limit shapes
in this case.

\smallskip

\begin{figure}[hbt]
\centering
\begin{tabular}{ccc}
\includegraphics[height=3.cm]{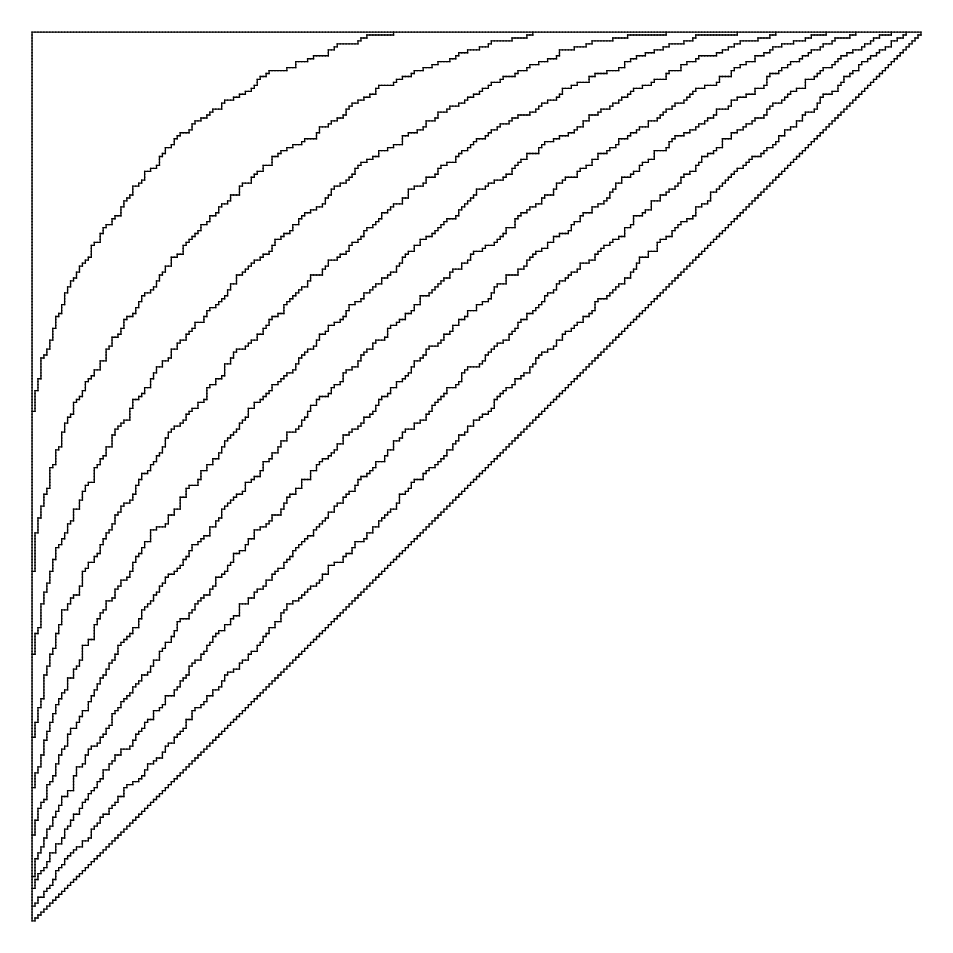} & \hskip1.cm \includegraphics[width=3.cm]{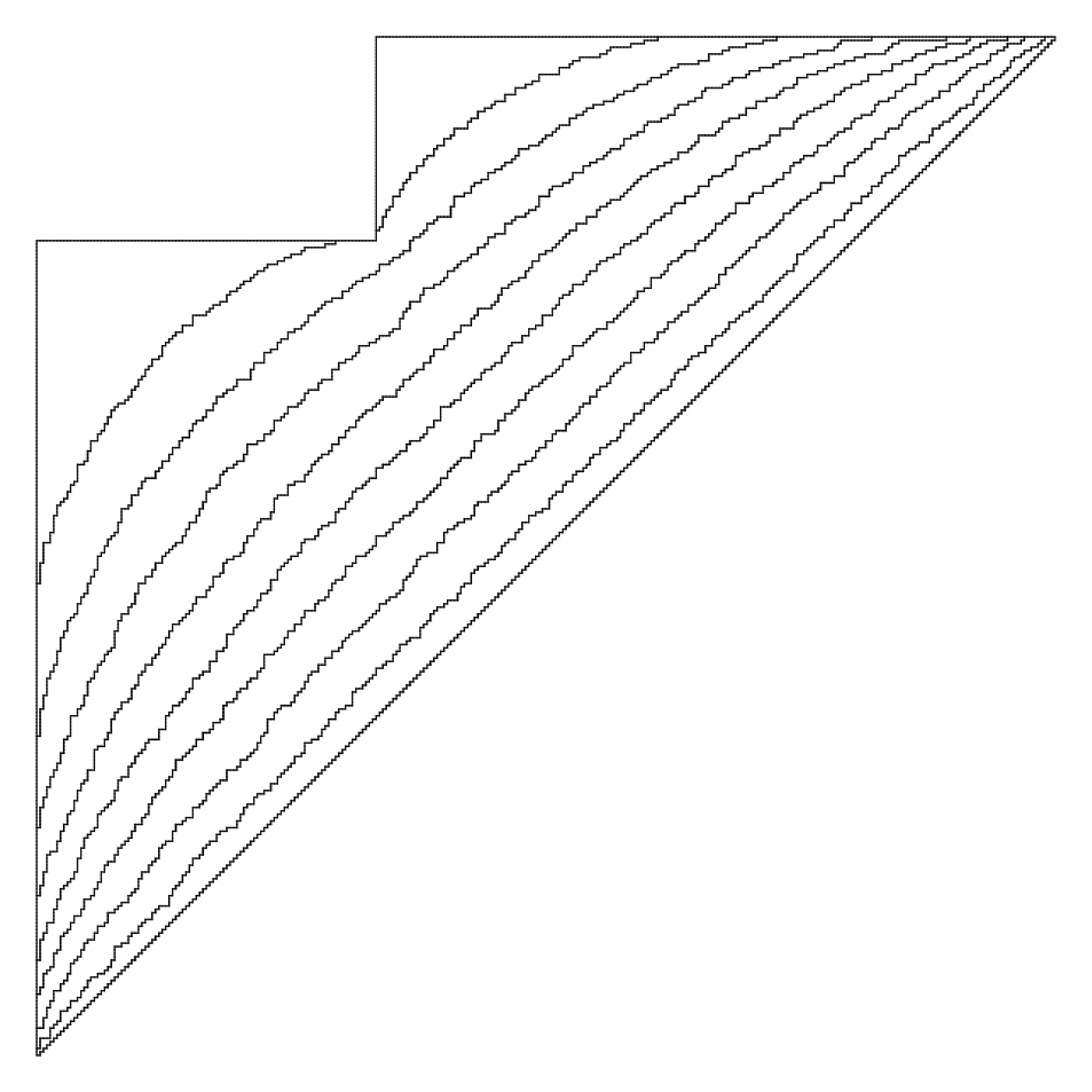} & \hskip1.cm {\includegraphics[width=3.cm]{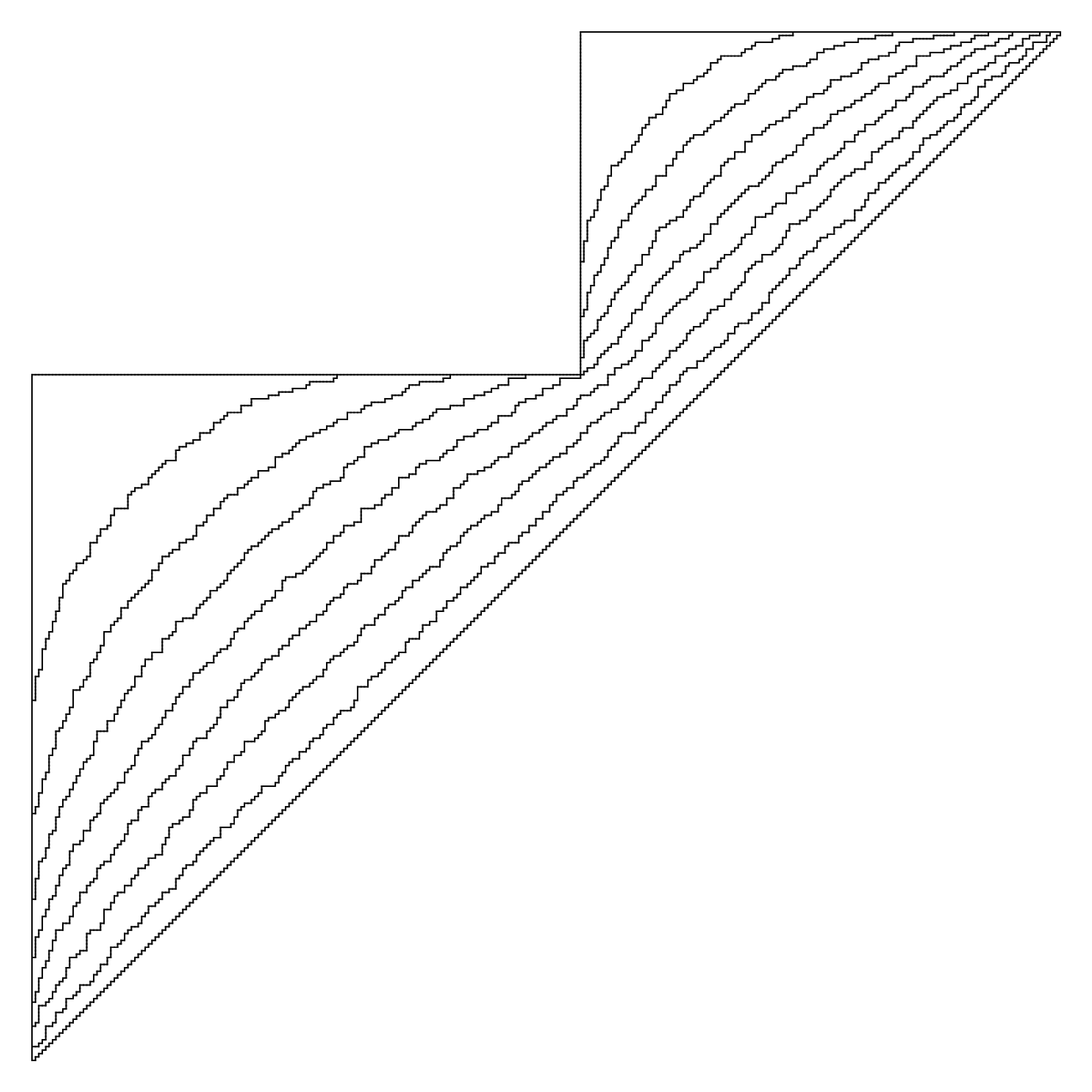}} \\
    & & \\
    \includegraphics[height=3.cm]{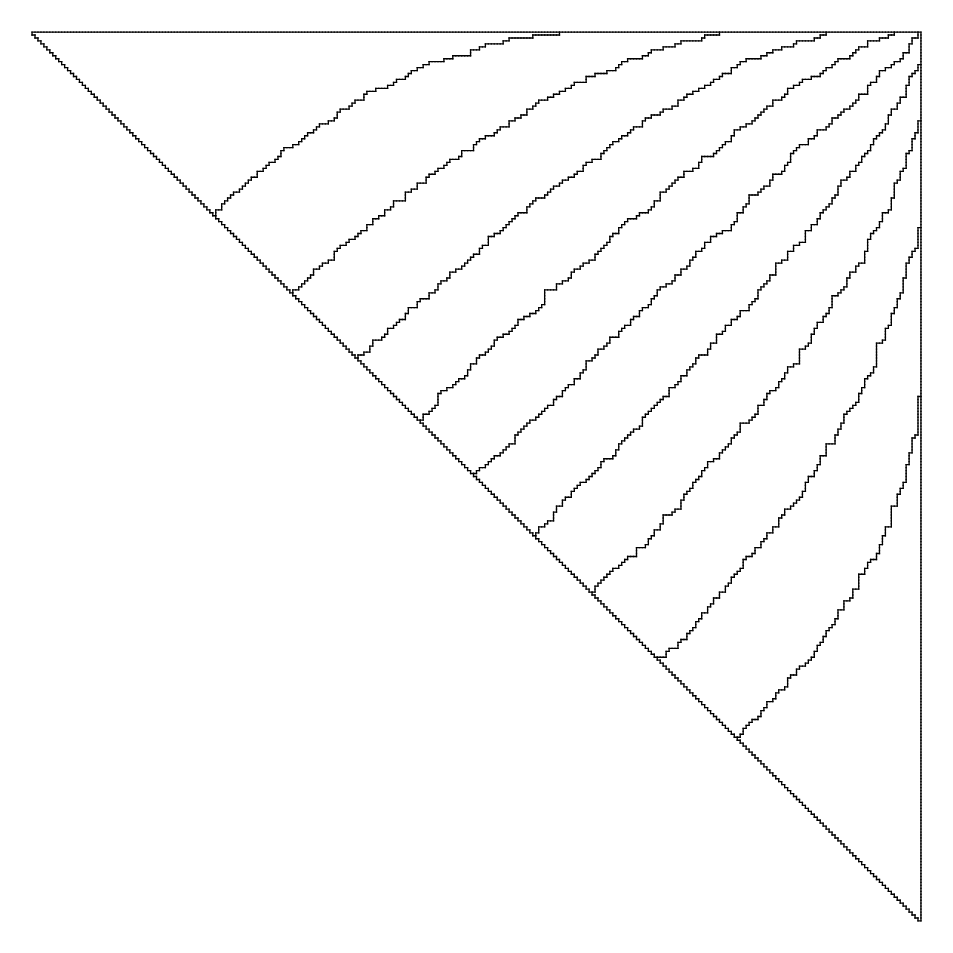} & \hskip1.cm
    {\includegraphics[width=3.cm]{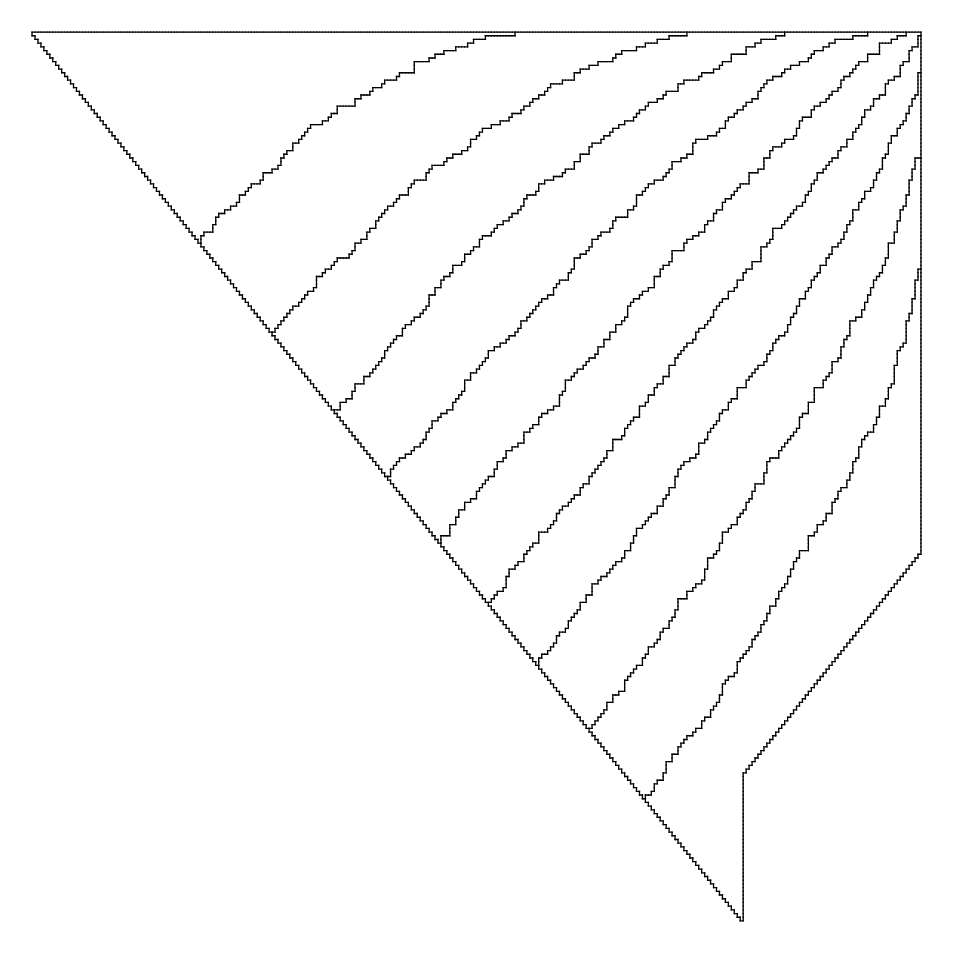}}
        & \hskip1.1cm
    \includegraphics[width=3.2cm]{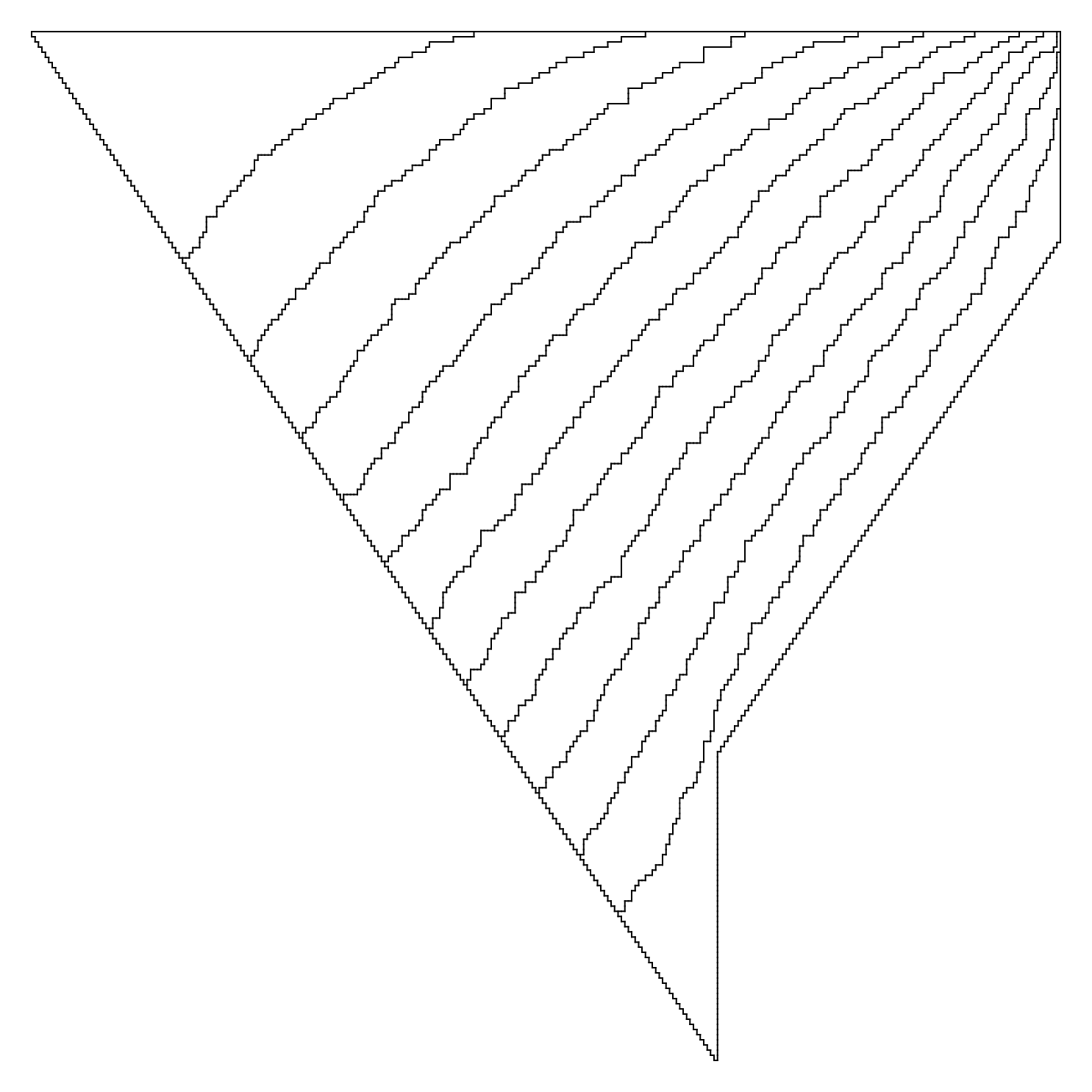}
\end{tabular}
    \caption{Simulations for $k=300$.  \underline{Left column}: \ts $\SYT(\de_k)$ \ts and \ts $\ShSYT(\vr_k)$.
     \underline{Middle column}: \ts $\SYT(\de_k/b^a)$ \ts and \ts $\ShSYT(\vr-\tau^{a,b})$ \ts with \ts  $(a,b)=(60,100)$.
     \underline{Right column}: \ts $\SYT(\de_k/b^a)$ \ts and \ts $\ShSYT(\vr-\tau^{a,b})$ \ts with \ts   $(a,b)=(100,160)$.}
\label{f:sim}
\end{figure}
\smallskip

\subsection{}  \label{ss:finrem-q}
It follows from~\Cref{l:complement} and our bijective proof of~\Cref{t:DeWitt-SYT},
that this theorem has a $q$-analogue where the tableaux are weighted with \ts $q^{\maj(T)}$, where \ts
$\maj(T)$ \ts is the \emph{major index} (see e.g.~\cite[$\S$7.19]{EC}).  It would be
interesting to find a $q$-analogue of~\Cref{t:DeWitt-random}.

Let us note that the same (numerical) $q$-analogue of~\Cref{c:dewitt}
is given in \cite{KS,MPP3}.  Paper \cite{MPP1} gave a combinatorial
interpretation for the GF over semistandard Young tableaux of
skew and shifted skew shapes, which can be viewed as another natural
$q$-analogue of \ts $f^{\la/\mu}=|\SYT(\la/\mu)|$.
Finally, Kerov defined a $q$-hook walk in~\cite{Ker}.  His approach
extends to the \emph{weighted case}~\cite{CKP} and then can be modified
to the \emph{shifted weighted case}~\cite{Kon}, which includes the \emph{$q$-shifted
case} as a special case.

\vskip.5cm

{\small
\subsection*{Acknowledgements}
We are grateful to Darij Grinberg, Eric Marberg, Damir Yeliussizov and Alex Yong for helpful remarks.
Special thanks to Darij Grinberg, Vic Reiner and John Stembridge
for telling us about the history of~\Cref{t:skew}
(see~$\S$\ref{ss:finrem-hist}), and informing us of~\cite{GR} and~\cite{Ste3}.
Additionally, we thank Jianping Pan for pointing out several errors in an earlier draft of this document.
Some of the authors were hanging out at MSRI during the Fall of~2017
when some of this research was done.  We are grateful to MSRI for
the hospitality even if those days filled with wildfire smoke feel
like  memories from another lifetime. The simulations in Figure~\ref{f:sim}
were made using Sage and its algebraic combinatorics features
developed by the Sage-Combinat community~\cite{sage-combinat}.  AHM was partially supported by
the NSF grant DMS-1855536. IP was partially supported by the NSF grants
DMS-1700444 and CCF-2007891.  NW was partially supported by Simons grant 585380.
}


\vskip.9cm

{\vskip.9cm }


\begin{thebibliography}{BNPS1}


\bibitem[AAN]{AMAN}
F.~Abney-McPeek, S.~An and J.~Ng,
The Stembridge equality for skew stable Grothendieck
polynomials and skew dual stable Grothendieck polynomials,
preprint (2020), 23~pp.; \ts {\tt arXiv:2102.04979}.

\bibitem[AV]{AW}
J.~Alman and V.~Vassilevska Williams,
A refined laser method and faster matrix multiplication, in
\emph{Proc.\ 32-nd SODA} (2021), 522--529; available at
\ts {\tt arXiv:2010.05846}.



\bibitem[AHRV]{AHRV}
O.~Angel, A.~E.~Holroyd, D.~Romik and B.~Virag,
Random sorting networks,
\emph{Adv.\ Math.}~\textbf{215} (2007), 839--868.

\bibitem[AS]{AS}
F.~Ardila and L.~G.~Serrano,
Staircase skew Schur functions are Schur $P$-positive,
\emph{J.\ Algebraic Combin.}~\textbf{36} (2012), 409--423.



\bibitem[BZ]{BZ}
A.~D.~Berenstein and A.~V.~Zelevinsky,
Triple multiplicities for sl$(r+1)$ and the spectrum of the exterior algebra
of the adjoint representation,
\emph{J.~Algebraic Combin.}~\textbf{1} (1992), 7--22.

\bibitem[BJS]{BJS}
S.~Billey, W.~Jockusch and R.~P.~Stanley,
Some combinatorial properties of Schubert polynomials,
\emph{J.\ Algebraic Combin.} \textbf{2} (1993), 345--374.



\bibitem[BD]{BD}
R.~Bubley and M.~Dyer,
Faster random generation of linear extensions,
\emph{Discrete Math.}~\textbf{201} (1999), 81--88.

\bibitem[BS]{BS}
A.~S.~Buch and M.~J.~Samuel,
$K$-theory of miniscule varieties,
\emph{J.\ Reine Angew.\ Math.}~\textbf{719} (2016), 133--171.


\bibitem[Bump]{Bump}
D.~Bump, \emph{Lie groups} (Second ed.),
Springer, New York, 2013, 551~pp.


\bibitem[CK]{CK}
S.-I.~Choi and J.-H.~Kwon,
Crystals and Schur $P$-Positive expansions,
\emph{Electron.\ J.\ Combin.}~\textbf{25} (2018), no.~3, Paper~3.7, 27~pp.

\bibitem[CKP]{CKP}
I.~Ciocan-Fontanine, M.~Konvalinka and I.~Pak,
The weighted hook length formula,
\emph{J.\ Combin.\ Theory, Ser.~A}~\textbf{118} (2011), 1703--1717.

\bibitem[CTY]{CTY}
E.~Clifford, H.~Thomas and A.~Yong,
$K$-theoretic Schubert calculus for $OG(n,2n+1)$ and jeu de taquin
for shifted increasing tableaux,
\emph{J.\ Reine Angew.\ Math.}~\textbf{690} (2014), 51--63.

\bibitem[DeW]{DeW}
E.~A.~DeWitt, \emph{Identities Relating Schur $s$-Functions and $Q$-Functions},
Ph.D.\ thesis, University of Michigan, 2012, 73~pp.; available at \ts
\href{https://deepblue.lib.umich.edu/handle/2027.42/93841}{https://tinyurl.com/y9ktq5v7}.


\bibitem[Dru]{Dru}
P.~Drube, Set-valued tableaux and generalized Catalan numbers,
\emph{Australas.\ J.\ Combin.}~\textbf{72} (2018), 55--69.

\bibitem[EG]{EG}
P.~Edelman and C.~Greene,
Balanced tableaux, \emph{Adv.\ Math.}~\textbf{63} (1987), 42--99.

\bibitem[Feit]{Feit}
W.~Feit,
The degree formula for the skew-representations of the symmetric group,
\emph{Proc.\ AMS}~\textbf{4} (1953), 740--744.

\bibitem[Fis]{Fis}
I.~Fischer, A bijective proof of the hook-length formula for shifted standard tableaux,
preprint (2001), 47~pp.; {\tt arXiv:math/0112261}.

\bibitem[FK]{FoKi}
  A.~M.~Foley and R.~C.~King, Determinantal and {P}faffian identities
  for ninth variation skew {S}chur functions and $Q$-functions,
 \emph{European J.\  Combin.}~\textbf{93} (2021), 103271, 31~pp.

\bibitem[G+]{GMPPPRST}
C.~Gaetz, M.~Mastrianni, R.~Patrias, H.~Peck, C.~Robichaux, D.~Schwein and K.~Y.~Tam,
$K$-Knuth equivalence for increasing tableaux,
\emph{Electron.\ J.\ Combin.}~\textbf{23} (2016), Paper~1.40, 37~pp.


\bibitem[Gor]{Gor}
A.~Gordenko,
Limit shapes of large skew Young tableaux and a modification of the TASEP process,
preprint (2020), 43~pp.; {\tt arXiv:2009.10480}.

\bibitem[GR]{GR}
D.~Grinberg and V.~Reiner, \emph{Hopf Algebras in Combinatorics},
monograph in preparation, 282~pp., \ts {\tt arXiv:1409} {\tt 8356v7};
an expanded version with solutions is available in the {\tt
\href{https://arxiv.org/src/1409.8356v7/anc/HopfComb-v80-with-solutions.pdf}{ancillary file}}, 1203~pp.

\bibitem[Hai1]{Hai1}
M.~D.~Haiman,
On mixed insertion, symmetry, and shifted Young tableaux,
\emph{J.\ Combin.\ Theory, Ser.~A}~\textbf{50} (1989), 196--225.

\bibitem[Hai2]{Hai2}
M.~D.~Haiman,
Dual equivalence with applications, including a conjecture of Proctor,
\emph{Discrete Math.}~\textbf{99} (1992), 79--113.

\bibitem[H+]{HKPWZZ}
Z.~Hamaker, A.~Keilthy, R.~Patrias, L.~Webster, Y.~Zhang and S.~Zhou,
Shifted Hecke insertion and the $K$-theory of $OG(n,2n+1)$,
\emph{J.\ Combin.\ Theory, Ser.~A}~\textbf{151} (2017), 207--240.

\bibitem[H++]{code}
Z.~Hamaker, A.~Morales, I.~Pak, L.~Serrano and N.~Williams,
Mixed Shifted Insertion.sagews (March 6, 2021),  SageMath,
\href{https://cocalc.com/share/b8a5580510561b4e7f0950cdbab7c6ab1e73eaac/Mixed%20Shifted%20Insertion.sagews?viewer=share}{https://tinyurl.com/4aejt48c}.


\bibitem[HMP1]{HMP2}
Z.~Hamaker, E.~Marberg and B.~Pawlowski,
Involution words II: braid relations and atomic structures,
\emph{J.\ Algebraic Combin.}~\textbf{45} (2017), 701--743.

\bibitem[HMP2]{HMP1}
Z.~Hamaker, E.~Marberg and B.~Pawlowski,
Involution words: counting problems and connections to Schubert calculus for symmetric orbit closures,
\emph{J.\ Combin.\ Theory, Ser.~A}~\textbf{160} (2018), 217--260.

\bibitem[HMP3]{HMP3}
Z.~Hamaker, E.~Marberg and B.~Pawlowski,
Schur $P$-positivity and involution Stanley symmetric functions,
\emph{Int.\ Math.\ Res.\ Not.}~\textbf{2019} (2019),
5389--5440.

\bibitem[HMP4]{HMP4}
Z.~Hamaker, E.~Marberg and B.~Pawlowski,
Fixed-point-free involutions and Schur $P$-positivity,
\emph{J.~Comb.}~\textbf{11} (2020), 65--110.

\bibitem[HPPW]{HPPW}
Z.~Hamaker, R.~Patrias, O.~Pechenik and N.~Williams,
Doppelg\"angers: bijections of plane partitions,
\emph{Int.\ Math.\ Res.\ Not.}~\textbf{2020} (2020), 487--540.

\bibitem[HS]{HS}
P.~Hanlon and S.~Sundaram,
On a bijection between Littlewood--Richardson fillings of conjugate shape,
\emph{J.\ Combin. Theory, Ser.~A}~\textbf{60} (1992), 1--18.

\bibitem[HPS]{HPS}
G.~Hawkes, K.~Paramonov and A.~Schilling,
Crystal Analysis of type C Stanley Symmetric Functions,
\emph{Electron. J. Combin.}~\textbf{24} (2017).
  Article P3.51, 32~pp.




\bibitem[Hir]{Hiro}
T.~Hiroshima,
$\mathfrak{q}$-crystal structure on primed tableaux and on signed unimodal factorizations
of reduced words of type~$B$, \emph{Publ.\ Res.\ Inst.\ Math.\ Sci.}~\textbf{55} (2019), 369--399.

\bibitem[IN]{IN}
T.~Ikeda and H.~Naruse, $K$-theoretic analogues of factorial Schur $P$-and $Q$-functions,
\emph{Adv.\ Math.}~\textbf{243} (2013), 22--66.

\bibitem[JVV]{JVV}
M.~R.~Jerrum, L.~G.~Valiant and V.~V.~Vazirani,
Random generation of combinatorial structures from a uniform distribution,
\emph{Theoret.\ Comput.\ Sci.}~\textbf{43} (1986), 169--188.

\bibitem[Ker]{Ker}
S.~Kerov,  A $q$-analog of the hook walk algorithm for random Young tableaux,
\emph{J.\ Algebraic Combin.}~\textbf{2} (1993), 383--396.



\bibitem[Kon]{Kon}
M.~Konvalinka, The weighted hook length formula~III: Shifted tableaux,
\emph{Electron.\ J.\ Combin.}~\textbf{18} (2011), no.~1, Paper~101, 29~pp.

\bibitem[KS]{KS}
C.~Kratthenthaler and M.~Schlosser, The major index generating
function of standard Young tableaux of shapes of the form ``staircase
minus rectangle", {\em Contemp.\ Math.}~\textbf{627} (2014), 111--122.

\bibitem[LMP]{LMP}
T.~Lai, A.~H.~Morales and  I.~Pak, in preparation.

\bibitem[LM]{LM}
J.~B.~Lewis and E.~Marberg, Enriched set-valued $P$-partitions and shifted 
stable Grothendieck polynomials,  \emph{Math.~Z.}, to appear; 
preprint (2019) 51~pp.
\ts {\tt arXiv:1907.10691}


\bibitem[LPS]{LPS}
S.~Linusson, S.~Potka and R.~Sulzgruber,
On random shifted standard Young tableaux and 132-avoiding sorting networks,
\emph{S\'em.\ Lothar.\ Combin.}~\textbf{82B} (2020), Art.~84, 12~pp.

\bibitem[Mac]{Mac}
I.~G.~Macdonald, \emph{Symmetric functions and Hall polynomials}
(Second ed.), Oxford U.~Press, New York, 1995, 475~pp.

\bibitem[Mar]{Mar}
E.~Marberg, A symplectic refinement of shifted Hecke insertion,
\emph{J.~Combin.\ Theory, Ser.~A}~\textbf{173} (2020), 105216, 50~pp.

\bibitem[MP1]{MP1}
E.~Marberg and B.~Pawlowski, $K$-theory formulas for orthogonal and symplectic orbit closures,
\emph{Adv.\ Math.}~\textbf{372} (2020), 107299, 43~pp.

\bibitem[MP2]{MP2}
E.~Marberg and B.~Pawlowski, On some properties of symplectic Grothendieck polynomials,
\emph{J.\ Pure Appl.\ Alg.}~\textbf{225} (2021), 106463, 22~pp.

\bibitem[MPY]{MPY}
C.~Monical, B.~Pankow and A.~Yong,
Reduced word enumeration, complexity, and randomization, preprint (2019), 23~pp.;
\ts {\tt arXiv:1901.03247}

\bibitem[MPP1]{MPP1}
A.~H.~Morales, I.~Pak and G.~Panova,
Hook formulas for skew shapes I. $q$-analogues and bijections,
\emph{J.~Combin.\ Theory, Ser.~A}~\textbf{154} (2018), 350--405.


\bibitem[MPP2]{MPP3}
A.~H.~Morales, I.~Pak and G.~Panova,
Hook formulas for skew shapes III. Multivariate and product formulas,
\emph{Algebraic Combinatorics}~\textbf{2} (2019), 815--861.

\bibitem[MPPS]{MPPS}
J.~Morse, J.~Pan, W.~Poh and A.~Schilling, A crystal on decreasing factorizations
in the $0$-Hecke monoid, \emph{Electron.\ J.\ Combin.}~\textbf{27} (2020), Paper~2.29, 48~pp.

\bibitem[NW]{NW}
A.~Nijenhuis and H.~S.~Wilf, \emph{Combinatorial algorithms} (second ed.),
Academic Press, New York, 1978, 302~pp.


\bibitem[PP1]{PP1}
I.~Pak and F.~Petrov, Hidden symmetries of weighted lozenge tilings,
\emph{Electron.\ J.\ Combin.}~\textbf{27} (2020), Paper~3.44, 18~pp.

\bibitem[PP2]{PP2}
R.~Patrias and P.~Pylyavskyy, Dual filtered graphs,
\emph{Algebraic Combinatorics}~\textbf{1} (2018), 441--500.

\bibitem[PP3]{PP3}
R.~Patrias and P.~Pylyavskyy, Combinatorics of $K$-theory via a $K$-theoretic
Poirier--Reutenauer bialgebra,
\emph{Disc.\ Math.}~\textbf{339} (2016), 1095--1115.

\bibitem[PV1]{PV}
I.~Pak and E.~Vallejo, Combinatorics and geometry of Littlewood--Richardson cones,
\emph{European J.\ Combin.}~\textbf{26} (2005), 995--1008.

\bibitem[PV2]{PV2}
I.~Pak and E.~Vallejo, Reductions of Young tableau bijections,
\emph{SIAM J.\ Discrete Math.}~\textbf{24} (2010), 113--145.

\bibitem[PR]{PR}
B.~Pittel and D.~Romik,
Limit shapes for random square Young tableaux,
\emph{Adv.\ Appl.\ Math.}~\textbf{38} (2007), 164--209.


\bibitem[Pur]{Pur}
K.~Purbhoo,
A marvellous embedding of the Lagrangian Grassmannian,
\emph{J.\ Combin.\ Theory, Ser.~A}~\textbf{155} (2018), 1--26.

\bibitem[RSS]{RSS}
V.~Reiner, K.~M.~Shaw and S.~van Willigenburg,
Coincidences among skew Schur functions,
\emph{Adv.\ Math.}~\textbf{216} (2007), 118--152.

\bibitem[RTY]{RTY}
V.~Reiner, B.~E.~Tenner and A.~Yong,
Poset edge densities, nearly reduced words, and barely set-valued tableaux,
\emph{J.\ Combin.\ Theory, Ser.~A}~\textbf{158} (2018), 66--125.


\bibitem[Sag1]{Sag-GNW}
B.~E.~Sagan, On selecting a random shifted Young tableau,
{\em J.~Algorithms}~\textbf{1} (1980), 213--234.

\bibitem[Sag2]{Sag}
B.~E.~Sagan, Shifted tableaux, Schur $Q$-functions, and a conjecture of R.~Stanley,
\emph{J.\ Combin.\ Theory, Ser.~A}~\textbf{45} (1987), 62--103.

\bibitem[Sag3]{Sag-book}
B.~E.~Sagan, {\em The symmetric group}, Springer, New York, 2001, 238~pp.


\bibitem[Sage]{sage-combinat}
The {S}age-{C}ombinat community.
\newblock {S}age-{C}ombinat: enhancing {S}age as a toolbox for computer
  exploration in algebraic combinatorics, 2021.
\newblock \url{http://combinat.sagemath.org}.

\bibitem[ScS]{SS}
C.~Schneider and R.~Sulzgruber,
Asymptotic and exact results on the complexity of the Novelli--Pak--Stoyanovskii
algorithm, \emph{Electron.\ J.\ Combin.}~\textbf{24} (2017), Paper~2.28, 33~pp.

\bibitem[Ser]{Ser}
L.~Serrano, The shifted plactic monoid, \emph{Math.\ Z.}~\textbf{266} (2010), 363--392.

\bibitem[Sta]{EC}
R.~P.~Stanley, {\em Enumerative Combinatorics}, vol.~1 (second~ed.) and~2 (first ed.),
Cambridge Univ.~Press, 2012 and~1999.

\bibitem[Ste1]{Ste1}
J.~R.~Stembridge,
Shifted tableaux and the projective representations of symmetric groups,
\emph{Adv.\ Math.}~\textbf{74} (1989), 87--134.


\bibitem[Ste2]{Ste2}
J.~R.~Stembridge,
On the fully commutative elements of Coxeter groups,
\emph{J.\ Algebraic Combin.}~\textbf{5} (1996), 353--385.


\bibitem[Ste3]{Ste3}
J.~R.~Stembridge, Email to V.~Reiner, 2006.

\bibitem[Sun]{Sun}
W.~Sun, Dimer model, bead and standard Young tableaux:
finite cases and limit shapes, preprint (2018), 67~pp.;
\ts {\tt arXiv:1804.03414}.

\bibitem[TY1]{TY1}
H.~Thomas and A.~Yong, An $S_3$-symmetric Littlewood--Richardson rule,
{\em Math.\ Res.\ Lett.}~\textbf{15} (2008), 1027--1037.

\bibitem[TY2]{TY2}
H.~Thomas and A.~Yong, Longest increasing subsequences, Plancherel-type measure and the Hecke insertion algorithm,
{\em Adv.\ in\ App.\ Math.}~\textbf{46} (2011), 610--642.

\bibitem[Thr]{Thr}
R.~M.~Thrall,  A combinatorial problem,
\emph{Michigan Math.\ J.}~\textbf{1} (1952), 81--88.

\bibitem[Wil]{Wil}
D.~B.~Wilson,
Determinant algorithms for random planar structures, in \emph{Proc.\ 8-th SODA},
ACM, New York, 1997, 258--267.

\bibitem[Wor]{Wor}
D.~R.~Worley, \emph{A theory of shifted Young tableaux}, Ph.D.\ thesis, MIT, 1984, 139~pp.;
available at \\ \ts \href{https://dspace.mit.edu/handle/1721.1/15599}{https://tinyurl.com/1lomzxx3}

\bibitem[Yun]{Yun}
T.~Yun, \emph{Diagrams of affine permutations and their labellings}, Ph.D.\ thesis, MIT, 2013, 64~pp.;
available at \\ \ts \href{https://dspace.mit.edu/handle/1721.1/83702}{https://tinyurl.com/1hymo89y}

\bibitem[Zel]{Zel}
A.~V.~Zelevinsky,
A generalization of the Littlewood--Richardson rule and the Robinson--Schensted--Knuth
correspondence, \emph{J.\ Algebra}~\textbf{69} (1981), 82--94.


\end{thebibliography}
\end{document}